\numberwithin{equation}{section}
\newtheorem{theorem}{Theorem}[section]
\newtheorem{corollary}[theorem]{Corollary}
\newtheorem{lemma}[theorem]{Lemma}
\newtheorem{claim}[theorem]{Claim}
\theoremstyle{definition}
\newtheorem{definition}[theorem]{Definition}
\newtheorem{remark}[theorem]{Remark}
\DeclareMathOperator{\SP}{SP}
\DeclareMathOperator{\Span}{Span}
\DeclareMathOperator{\Div}{Div}
\DeclareMathOperator{\Supp}{Supp}
\DeclareMathOperator{\covgon}{cov.gon}
\DeclareMathOperator{\conngon}{conn.gon}
\DeclareMathOperator{\gon}{gon}
\DeclareMathOperator{\mult}{mult}
\DeclareMathOperator{\Pic}{Pic}
\DeclareMathOperator{\irr}{irr}
\def\bP{{\mathbb P}}
\def\cE{{\mathcal E}}
\begin{document}

\title[Covering gonality of symmetric products of curves]{Covering gonality of symmetric products of curves and Cayley--Bacharach condition on Grassmannians}

\author{Francesco Bastianelli}
\address{Francesco Bastianelli, Dipartimento di Matematica, Universit\`{a} degli Studi di Bari Aldo Moro, Via Edoardo Orabona 4, 70125 Bari -- Italy}
\email{francesco.bastianelli@uniba.it}

\author{Nicola Picoco}
\address{Nicola Picoco, Dipartimento di Matematica, Universit\`{a} degli Studi di Bari Aldo Moro, Via Edoardo Orabona 4, 70125 Bari -- Italy}
\email{nicola.picoco@uniba.it}

\begin{abstract}
Given an irreducible projective variety $X$, the covering gonality of $X$ is the least gonality of an irreducible curve $E\subset X$ passing through a general point of $X$.
In this paper we study the covering gonality of the $k$-fold symmetric product $C^{(k)}$ of a smooth complex projective curve $C$ of genus $g\geq k+1$.
It follows from a previous work of the first author that the covering gonality of the second symmetric product of $C$ equals the gonality of $C$.
Using a similar approach, we prove the same for the $3$-fold and the $4$-fold symmetric product of $C$.

A crucial point in the proof is the study of the Cayley--Bacharach condition on Grassmannians.
In particular, we describe the geometry of linear subspaces of $\mathbb{P}^n$ satisfying this condition and we prove a result bounding the dimension of their linear span.
\end{abstract}

\thanks{This work was partially supported by INdAM (GNSAGA)} 

\maketitle

\section{Introduction}\label{section:intro}

In recent years there has been a considerable amount of interest and work concerning covering gonality and, more generally, measures of irrationality of projective varieties (see e.g. \cite{BDELU, BCFS, CS, CMNP, LM, Mar, V}).
Along these lines, we are interested in determining the covering gonality of symmetric products of curves.
To this aim, we also come across the study of the Cayley--Bacharach condition on Grassmannians, a topic of independent interest, and we describe the geometry of linear subspaces of $\mathbb{P}^n$ satisfying this condition.

\smallskip
Let $X$ be an irreducible complex projective variety.
We recall that the \emph{gonality} of an irreducible complex projective curve $E$, denoted by $\gon(E)$, is the least degree of a non-constant morphism $\widetilde{E}\longrightarrow \mathbb{P}^1$, where $\widetilde{E}$ is the normalization of $E$.
The \emph{covering gonality} of $X$ is the birational invariant defined as
\begin{displaymath}
\covgon(X):=\min\left\{d\in \mathbb{N}\left|
\begin{array}{l}
\text{Given a general point }x\in X,\,\exists\text{ an irreducible}\\ \text{curve } E\subseteq X  \text{ such that }x\in E \text{ and }\gon(E)=d
\end{array}\right.\right\}.
\end{displaymath}
In particular, if $X$ is a curve, then $\covgon(X)$ equals $\gon(X)$.
Furthermore, it is worth noticing that $\covgon(X)=1$ if and only if $X$ is uniruled.
Thus the covering gonality of $X$ can be viewed as a measure of the failure of $X$ to be uniruled.
We are aimed at computing this invariant when $X$ is the $k$-fold symmetric product of a smooth curve.
 
\smallskip
Let $C$ be a smooth complex projective curve of genus $g$ and let $k$ be a positive integer.
The $k$-fold symmetric product of $C$ is the smooth projective variety $C^{(k)}$ parameterizing unordered $k$-tuples $p_1+\dots+p_k$ of points of $C$.
This is an important variety reflecting the geometry of $C$, which is involved in many fundamental results on algebraic curves, as e.g. in Brill--Noether theory.

We note that $C^{(k)}$ is covered by copies of $C$ of the form $C_P:=\left\{\left.P+q\in C^{(k)}\right|q\in C\right\}$, where $P=p_1+\dots+p_{k-1}\in C^{(k-1)}$ is a fixed point. 
Hence we deduce the obvious bound 
\begin{equation}
\label{eq:bound intro}
\covgon\big(C^{(k)}\big)\leq\gon(C).
\end{equation}
Therefore the main issue is bounding the covering gonality from below. 
In \cite{B1}, the first author proved that the covering gonality of $C^{(2)}$ equals the gonality of $C$, i.e. \eqref{eq:bound intro} is actually an equality, provided that $g\geq 3$.

In this paper, we prove the same for the $3$-fold and the $4$-fold symmetric product of a curve.
\begin{theorem}\label{thm:covgon}
Let $k\in\{3,4\}$ and let $C$ be a smooth complex projective curve of genus $g\geq k+1$.
Then the covering gonality of the $k$-fold symmetric product of $C$ is
\begin{equation}\label{eq:covgon}
\covgon\big(C^{(k)}\big)=\gon(C),
\end{equation}
provided that $\big(k,g,\gon(C)\big)\not\in\left\{ (3,4,3),(4,5,4)\right\}$.

\end{theorem}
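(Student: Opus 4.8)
The inequality $\covgon\big(C^{(k)}\big)\le\gon(C)$ is \eqref{eq:bound intro}, so the whole point is the reverse bound, which I would establish by contradiction. Suppose there is a family of irreducible curves $E\subseteq C^{(k)}$ sweeping out a dense subset of $C^{(k)}$ with $\gon(E)=d<\gon(C)=:c$. One may assume $C$ is non-hyperelliptic: otherwise $c=2$, so $d=1$ and $C^{(k)}$ would be uniruled, which is impossible for $g\ge k+1$ since then $u_k\colon C^{(k)}\to\Pic^k(C)$ maps birationally onto its image, a subvariety of an abelian variety. Hence $c\ge 3$ and $2\le d\le c-1$. After a standard base change one produces a smooth family $p\colon\cE\to T$ of curves equipped with a degree-$d$ morphism $f\colon\cE\to\bP^1$ over $T$ and a dominant morphism $u\colon\cE\to C^{(k)}$, such that for general $t\in T$ a general fibre of $f|_{\cE_t}$ is carried by $u$ to a reduced divisor $D_t=x_1+\dots+x_d$ consisting of $d$ distinct points of $C^{(k)}$, one of which is a general point of $C^{(k)}$.

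The first main input is the Cayley--Bacharach property attached to covering pencils: by the arguments of \cite{BDELU}, as used in \cite{B1}, the divisor $D_t$ satisfies the Cayley--Bacharach condition with respect to the canonical system $\big|\omega_{C^{(k)}}\big|$, that is, every $\omega\in H^0\big(C^{(k)},\omega_{C^{(k)}}\big)$ vanishing at all but one of the $x_i$ vanishes at the last one as well. I would then transport this to a Grassmannian. By Macdonald's formula $H^0\big(C^{(k)},\omega_{C^{(k)}}\big)=\bigwedge^{k}V$ with $V:=H^0(C,\omega_C)$, so, identifying $C$ with its canonical image in $\bP(V^{\vee})=\bP^{g-1}$, the canonical map of $C^{(k)}$ factors through the Pl\"ucker embedding $\mathbb{G}:=G(k,V^{\vee})\hookrightarrow\bP\big(\bigwedge^{k}V^{\vee}\big)$ via $\xi=q_1+\dots+q_k\mapsto\Lambda_\xi:=\langle q_1,\dots,q_k\rangle$ (the span taken in canonical space), and $\big|\omega_{C^{(k)}}\big|$ is the pullback of the hyperplane system of $\mathbb{G}$. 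Consequently the points $\Lambda_{x_1},\dots,\Lambda_{x_d}\in\mathbb{G}$ form a Cayley--Bacharach configuration in the Pl\"ucker space.

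Now I would invoke the structural result on the Cayley--Bacharach condition on Grassmannians proved in this paper: a configuration of $d$ points of $G(k,n)$ satisfying it must be degenerate, and in particular its linear span in the Pl\"ucker embedding has dimension well below the generic one; concretely this bounds $\dim U$, where $U:=\Lambda_{x_1}+\dots+\Lambda_{x_d}\subseteq V^{\vee}$. Since $x_1$ is a general point of $C^{(k)}$, the plane $\Lambda_{x_1}$ is the linear span of $k$ general points of $C$; combining this with the incidence relations among the $\Lambda_{x_i}$ --- each is spanned by $k$ points of $C$, and the $x_i$ may share points --- one confines all the points of $C$ appearing in $D_t=\sum_i\sum_j q_{ij}$ to the subspace $\bP(U)$ of small dimension. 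Geometric Riemann--Roch then exhibits on $C$ a linear series, hence a pencil of degree $\le d$, contradicting $\gon(C)=c>d$. The excluded triples $(k,g,\gon(C))\in\{(3,4,3),(4,5,4)\}$ are exactly the borderline cases $g=k+1$ in which the dimension bound coming from the Grassmannian analysis is too weak to force $\gon(C)\le d$, and must therefore be set aside.

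The heart of the argument --- and the step I expect to be by far the most delicate --- is the analysis of Cayley--Bacharach configurations on $G(k,n)$ for $k=3,4$: one has to determine which small configurations can satisfy the condition, derive the sharp bound on the dimension of their span, and then carry out the bookkeeping pitting this bound against the constraints ``$\Lambda_{x_i}$ is spanned by $k$ points of $C$'' and ``$x_1$ is general''. The rapid growth of the admissible incidence patterns as $k$ grows is what confines the method to $k\le 4$ and is responsible for the exceptions. Several routine but non-trivial points also demand attention: the curves $E_t$ may lie in special loci of $C^{(k)}$; the divisors $x_i$ need not be general and may share points of $C$, so that the $\Lambda_{x_i}$ are not spanned by $k$ general points; and the rational map $C^{(k)}\dashrightarrow\mathbb{G}$ is undefined along the locus of non-general divisors. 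Each of these needs a separate treatment but does not affect the overall strategy.
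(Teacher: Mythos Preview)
Your outline matches the paper's strategy: establish the upper bound by the obvious covering by copies of $C$, then for the lower bound take a covering family of $d$-gonal curves with $d<\gon(C)$, use the Cayley--Bacharach property of the fibres with respect to $|\omega_{C^{(k)}}|$, transport it via the Gauss map to special position of $(k-1)$-planes in $\mathbb{P}^{g-1}$, invoke the span bound (Theorem~\ref{thm:dimSP}/Corollary~\ref{cor:dimSP}), and conclude by geometric Riemann--Roch and Clifford. In the ``generic'' case where the $kd$ points $p_1,\dots,p_{kd}$ supporting $D=\sum P_i$ are distinct, this is exactly what the paper does (Case~A); the contradiction is reached not quite by ``a pencil of degree $\le d$'' but via Clifford's inequality $\dim|D|>\tfrac12\deg D$ forcing $\deg D\ge 2g$, and then a numerical comparison with $g\ge 2d-1$.

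The genuine gap in your proposal is the sentence ``each of these needs a separate treatment but does not affect the overall strategy''. In the paper the situation where the $P_i$ share points of $C$ (Case~B) is not a side issue: it occupies the large majority of the proof and requires tools entirely different from the span bound. One must stratify by the multiplicity profile $N_\alpha$ of $D$, pass to the map $\xi_{\alpha,t}\colon\mathbb{P}^1\dashrightarrow C^{(s)}$, and split into two regimes. If some $\xi_{\alpha,t}$ is constant, the curve $f(E_t)$ lies in a translate $Q+C^{(k-s)}$ and one reduces inductively to $\covgon(C^{(k-s)})$ (so ultimately to the $k=2$ result of \cite{B1}). If all $\xi_{\alpha,t}$ are non-constant, Abel's theorem forces $|N_\alpha|\ge\gon(C)$, which pins down three explicit combinatorial patterns (i)--(iii); each is then eliminated by ad~hoc arguments involving Claim-type incidence lemmas, Martens' and Mumford's theorems on $\dim W^r_d$, Castelnuovo's genus bound, very-ampleness criteria, and even Cayley's secant formula for space curves. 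None of this is visible from the Grassmannian span bound alone, and the method's failure to control these cases for $k\ge 5$ is precisely why the theorem stops at $k=4$. So your plan is the right skeleton, but the part you flag as routine is where essentially all the work lies.
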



Furthermore, a problem which naturally arises from \cite[Theorem 1.6]{B1} and Theorem \ref{thm:covgon} is characterizing families of irreducible curves on $C^{(k)}$ which compute its covering gonality.
This is in fact the purpose of \cite{BP2}, where we use similar techniques, and we prove that if $2\leq k\leq 4$ and $C$ is a sufficiently general curve of genus $g\geq k+4$, then the curves $C_P\subset C^{(k)}$ defined above are the only irreducible curves covering $C^{(k)}$ and having the same gonality as $C$.

Concerning the exceptional cases $(k,g,\gon(C))\in\{(3,4,3),(4,5,4)\}$ in the statement of Theorem \ref{thm:covgon}, we can not decide whether the covering gonality of the symmetric product equals $\gon(C)$ or $\gon(C)-1$. 
Moreover, we discuss the cases of low genus in Remark \ref{rem:low genus}. 

In the light of \cite[Theorem 1.6]{B1} and Theorem \ref{thm:covgon}, which describe $\covgon(C^{(k)})$ when $g\geq k+1$ and $k=2,3,4$, it seems plausible that the equality $\covgon\big(C^{(k)}\big)=\gon(C)$ may hold for any $2\leq k\leq g-1$.
On the other hand, we note that if $k\geq g+1$, the $k$-fold symmetric product of $C$ is covered by rational curves, so that $\covgon(C^{(k)})=1$ (cf. Remark \ref{rem:low genus}).
Finally, the remaining case $k=g$ is quite intriguing, since $C^{(g)}$ is birational to the Jacobian variety of $C$.
In this setting, inequality \eqref{eq:bound intro} fails to be an equality for some special curves, but the behavior of $\covgon(C^{(g)})$ for low genus suggests that equality \eqref{eq:covgon} might hold when $C$ has very general moduli (see Remark \ref{rem:low genus}).

\smallskip
In order to prove Theorem \ref{thm:covgon}, we follow the same argument as \cite{B1}.
In particular, let $\phi\colon C\longrightarrow \mathbb{P}^{g-1}$ be the canonical map and let $\gamma\colon C^{(k)}\dashrightarrow \mathbb{G}(k-1,g-1)$ be the \emph{Gauss map} of $C^{(k)}$, which sends a general point $P=p_1+\dots+p_{k}\in C^{(k)}$ to the point of the Grassmannian parameterizing the $(k-1)$-plane $\Span\left(\phi(p_1),\dots,\phi(p_k)\right)\subset \mathbb{P}^{g-1}$.
Moreover, let $E\subset C^{(k)}$ be a $d$-gonal curve passing through a general point of $C^{(k)}$, and let $P_1,\dots,P_d\in E$ be the points corresponding to a general fiber of the $d$-gonal map $\widetilde{E}\longrightarrow \mathbb{P}^1$.
According to the framework of \cite[Section 4]{B1}, the points $\gamma\left(P_1\right),\ldots, \gamma\left(P_d\right)\in \mathbb{G}(k-1,g-1)$ satisfy a condition of Cayley--Bacharach type, that we describe below (see also Theorem \ref{thm:covfamCk}).
Then a crucial point in the proof is showing that the corresponding $(k-1)$-planes in $\mathbb{P}^{g-1}$ span a linear space of sufficiently small dimension.
In particular, this governs the dimension of the linear series on $C$ defined by the points of $\phi(C)$ supporting $P_1,\dots,P_d\in C^{(k)}$.
So, we need a large amount of work based on standard results in Brill--Noether theory, in order to discuss existence of linear series on $C$, and we eventually conclude that $d\geq \gon(C)$. 

Unfortunately, the combinatorics of this approach does not work for $k\geq 5$.

\medskip
Turning to the Cayley--Bacharach condition, we consider a finite set of points $\Gamma\subset \mathbb{P}^n$ and a positive integer $r$.
We recall that $\Gamma$ satisfies the Cayley--Bacharach condition with respect to hypersurfaces of degree $r$ if any hypersurface passing through all but one point of $\Gamma$, passes through the last point too.
This is a very classical property, whose history goes back even to ancient geometry (see \cite{EGH} for a detailed treatise on this topic).
Besides, the Cayley--Bacharach condition has also been studied in recent years, and it has been applied to several issues, concerning e.g. Brill--Noether theory of space curves and measures of irrationality of algebraic varieties (cf. \cite{LP, BCD, LU, Pic, SU}).

More generally, we consider the Grassmannian $\mathbb{G}=\mathbb{G}(k-1,n)$ parameterizing $(k-1)$-dimensional linear subspaces of $\mathbb{P}^n$, and we say that a finite set of points $\Gamma\subset \mathbb{G}$ \emph{satisfies the Cayley--Bacharach condition with respect to} $\left|\mathcal{O}_{\mathbb{G}}(r)\right|$ if any effective divisor of $\left|\mathcal{O}_{\mathbb{G}}(r)\right|$ passing through all but one point of $\Gamma$, passes through the last point too.
Of course, the case $k=1$ recovers the former definition for points in $\mathbb{P}^n$.

We point out further that when $r=1$, if $\Gamma=\left\{[\Lambda_1],\dots,[\Lambda_d]\right\}\subset \mathbb{G}$ satisfies the Cayley--Bacharach condition with respect to $\left|\mathcal{O}_{\mathbb{G}}(1)\right|$, then any $(n-k)$-plane $L\subset \mathbb{P}^n$ intersecting all but one $(k-1)$-plane parameterized by $\Gamma$, must intersect the last one too (cf. Remark \ref{rem:SP}).
In this case we say that the $(k-1)$-planes $\Lambda_1,\dots,\Lambda_d\subset \mathbb{P}^n$ \emph{are in special position} (\emph{with respect to $(n-k)$-planes}); see Definition \ref{def:SP}.
It is worth noting that the study of the geometry of lines in special position was crucial in \cite{B1} and \cite{GK}, in order to describe measures of irrationality of $C^{(2)}$ and of the Fano surface of cubic threefolds, respectively. 

In this paper, we improve the results in \cite[Section 2]{B1} on the linear span of $(k-1)$-planes in special position.
In particular, we prove the following (cf. Theorem \ref{thm:dimSP}).

\begin{theorem}\label{thm:dimSP intro}
Let $\Gamma=\left\{\Lambda_1,\dots,\Lambda_d\right\}\subset \mathbb{P}^{n}$ be a set of distinct $(k-1)$-dimensional linear subspaces in special position with respect to $(n-k)$-planes.
Assume further that $\Gamma$ does not admit a partition in subsets, whose elements are still in special position. 
Then
$$
\dim\Span\left(\Lambda_1,\dots,\Lambda_d\right)\leq d+k-3.
$$
\end{theorem}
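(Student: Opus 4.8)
The plan is to recast the statement in linear algebra: writing $W_i\subset V=\mathbb{C}^{n+1}$ for the $k$-dimensional subspace with $\mathbb{P}(W_i)=\Lambda_i$, one has $\Span(\Lambda_1,\dots,\Lambda_d)=\mathbb{P}(W_1+\dots+W_d)$, so we must prove $\dim(W_1+\dots+W_d)\le d+k-2$, i.e. $n\le d+k-3$. The first move is to record two stability properties of being in special position, both proved by the same dimension count. If $L$ is an $(n-k-s)$-plane of $\Span(\Lambda_1,\dots,\Lambda_d)=\mathbb{P}^{n-s}$ meeting all but one of the $\Lambda_i$, then adjoining $s$ general points turns $L$ into an $(n-k)$-plane of $\mathbb{P}^n$ still meeting all but one $\Lambda_i$, hence — by hypothesis — the last one as well; since the resulting plane meets $\Span(\Lambda_1,\dots,\Lambda_d)$ exactly along $L$, already $L$ meets the last plane. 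Thus \emph{special position descends to the linear span}, and we may assume $V=W_1+\dots+W_d$. The same computation shows that for a general hyperplane $H\subset\mathbb{P}^n$ the $(k-2)$-planes $\Lambda_i\cap H\subset H\cong\mathbb{P}^{n-1}$ are distinct and \emph{still in special position} (with respect to $(n-k)$-planes of $H$), since an $(n-k)$-plane $L\subset H$ meets $\Lambda_i\cap H$ exactly when it meets $\Lambda_i$ in $\mathbb{P}^n$.

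Next comes the key elementary input, a Grassmannian analogue of the classical Cayley--Bacharach redundancy for points: \emph{if $\Lambda_1,\dots,\Lambda_d$ are in special position, then $W_i\subseteq\sum_{j\neq i}W_j$, equivalently $\Lambda_i\subset\Span(\{\Lambda_j:j\neq i\})$, for every $i$}. Indeed, if $W_i\not\subseteq\sum_{j\neq i}W_j$ one chooses a hyperplane $H'$ with $\bigcup_{j\neq i}\Lambda_j\subset H'$ but $\Lambda_i\not\subset H'$; every $(n-k)$-plane of $H'\cong\mathbb{P}^{n-1}$ automatically meets each $\Lambda_j$ with $j\neq i$ (their dimensions are complementary in $H'$), while a general one avoids $\Lambda_i\cap H'$, which has dimension $k-2$ — contradicting special position. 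Consequently $\Span(\Lambda_1,\dots,\Lambda_d)=\Span(\Lambda_1,\dots,\Lambda_{d-1})$, so it is enough to bound the span of $d-1$ of the planes. For distinct planes in special position the redundancy already forces $d\geq 3$ (for $d=2$ it would give $\Lambda_1=\Lambda_2$), and the case $d=3$ is settled directly: there $W_1+W_2=W_1+W_3=W_2+W_3=V$, and if $\dim V\geq k+2$ one produces an $(n-k)$-plane meeting $\Lambda_2$ and $\Lambda_3$ but not $\Lambda_1$, so $\dim V\leq k+1=d+k-2$. More generally, the desired inequality follows once the planes can be ordered so that $\dim(W_1+W_2)\leq k+1$ and $\dim\big(W_j\cap(W_1+\dots+W_{j-1})\big)\geq k-1$ for $3\leq j\leq d-1$, since these give $\dim(W_1+\dots+W_{d-1})\leq(k+1)+(d-3)$.

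The engine is then an induction on $k$, using the slicing above to pass from $(k-1)$-planes to $(k-2)$-planes; the base case $k=1$ is the classical statement — also an immediate consequence of the redundancy — that $d$ points in Cayley--Bacharach position with respect to hyperplanes span at most a $\mathbb{P}^{d-2}$. The inductive step is immediate when, for general $H$, the sliced configuration $\{\Lambda_i\cap H\}$ again spans $H$ and is non-decomposable: then the inductive bound in $\mathbb{P}^{n-1}$ reads $n-1\leq d+(k-1)-3$, i.e. $n\leq d+k-3$. The work is in the complementary situation: when the sliced configuration decomposes, one refines it into non-decomposable special-position pieces, bounds each by the inductive hypothesis, and must show that their linear spans overlap enough — forcing a common subspace of dimension roughly $k-4$ — to recover $\dim V\leq d+k-2$ rather than the (too weak) sum of the individual bounds. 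This is precisely where one uses the hypothesis that $\Gamma$ admits no partition into special-position subsets: it is a connectedness property, which prevents the $W_i$ from breaking up into linearly independent blocks and forces the decomposing configurations to be cone-like, so that the common subspace exists and one may peel it off and recurse.

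The main obstacle is exactly this control of the linear span under the reduction: proving that a suitably chosen plane always meets the running span in codimension at most one, and that the pieces arising from a decomposition of the sliced configuration share a sufficiently large linear subspace. Both demand translating the purely combinatorial non-decomposability hypothesis into genuine linear-algebraic constraints on the subspaces $W_i$; and since the extremal configurations — cones over a cyclic chain of lines, for which equality holds — leave no slack, the estimates have to be carried out exactly.
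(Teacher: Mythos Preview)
Your setup is sound and genuinely different from the paper's: you induct on $k$ by slicing with a general hyperplane $H$ (so the $\Lambda_i\cap H$ are $(k-2)$-planes in $\mathbb{P}^{n-1}$), whereas the paper inducts on $d$ by projecting from a general point $p\in\Lambda_d$ (so the $\pi_p(\Lambda_i)$ for $i<d$ are still $(k-1)$-planes in $\mathbb{P}^{n-1}$). The preliminary facts you state---descent of special position to the linear span, preservation under slicing, the redundancy $\Lambda_i\subset\Span(\Lambda_j:j\neq i)$ (which is exactly the paper's Lemma~2.6), and the base case $k=1$---are all correct.

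However, the proof has a genuine gap precisely where you say the ``work'' lies. When the sliced configuration decomposes, you assert that the pieces must ``share a sufficiently large linear subspace'' and that indecomposability ``forces the decomposing configurations to be cone-like'', but you give no mechanism for either claim. Concretely: if the slices split into indecomposable blocks of sizes $d_1,\dots,d_m$, the inductive bound gives each block a span of dimension $\le d_j+k-4$, and you need the blocks' spans to overlap in dimension at least $k-4$ (or better) to recover $n-1\le d+k-4$; nothing in your outline explains why such overlaps occur. The paper faces the analogous obstacle after projection and resolves it through a chain of explicit lemmas (Lemmas~2.7--2.11), the key ones being a dichotomy (Lemma~2.10: a block that is $\SP$ after projection is either already $\SP$ upstairs, or its span contains $\Lambda_d$) and a careful recursive estimate on spans of blocks (Lemma~2.11). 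No analogue of Lemma~2.10 is available for hyperplane slicing---there is no distinguished plane to play the role of $\Lambda_d$---so it is not clear your reduction can be completed. Smaller points: your $d=3$ sketch (``one produces an $(n-k)$-plane meeting $\Lambda_2,\Lambda_3$ but not $\Lambda_1$'') needs a dimension count you do not supply, and the ``sufficient ordering'' with $\dim(W_1+W_2)\le k+1$ is stated but never shown to exist---indeed two of the $\Lambda_i$ need not meet in codimension one.
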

In particular, the theorem highlights that being in special position imposes strong restrictions to the geometry of the $k$-planes $\Lambda_1,\dots,\Lambda_d$, since $d$ linear spaces of dimension $k$ in \emph{general} position span a linear space of dimension $d(k+1)-1$.
We refer to Theorem \ref{thm:dimSP} and Corollary \ref{cor:dimSP} for more general statements, where the $(k-1)$-planes are allowed to coincide and $\Gamma$ may admit partitions as above.

Apart from the dimension of the span of linear spaces in special position, there are other interesting issues concerning the Cayley--Bacharach property on Grassmannians and its applications; we briefly mention them in \S \ref{sub:remarks}. 

The proof of Theorem \ref{thm:dimSP intro} is rather long and relies on various simple properties of linear spaces in special position. 
The idea is to consider the images of the planes $\Lambda_1,\dots,\Lambda_{d-1}$ under the projection $\pi_p\colon \mathbb{P}^n\dashrightarrow \mathbb{P}^{n-1}$ from a general point $p\in \Lambda_d$.
If $\Gamma'=\left\{\pi_p(\Lambda_1),\dots,\pi_p(\Lambda_{d-1})\right\}\subset \mathbb{P}^{n-1}$ does not admit a partition in subsets of $(k-1)$-planes in special position, the assertion follows easily by induction. 
It takes a great deal of work to prove the assertion when $\Gamma'$ admits such a partition.

\smallskip
The rest of the paper consists of two parts.
In Section \ref{section:sp}, we discuss the Cayley--Bacharach condition on Grassmannians and we prove our results on linear subspaces of $\mathbb{P}^n$ in special positions.

In Section \ref{section:covgonCk}, we are instead concerned with the covering gonality of $C^{(k)}$. 
Initially, we recall preliminary notions on covering gonality and we recollect the framework of \cite{B1}.
Then we prove Theorem \ref{thm:covgon}.

\subsection*{Notation}

We work throughout over the field $\mathbb{C}$ of complex numbers.
By \emph{variety} we mean a complete reduced algebraic variety $X$, and by \emph{curve} we mean a variety of dimension 1.
When we speak of a \emph{smooth} curve, we always implicitly assume it to be irreducible.
We say that a property holds for a \emph{general} (resp. \emph{very general}) point ${x\in X}$ if it holds on a Zariski open nonempty subset of $X$ (resp. on the complement of the countable union of proper subvarieties of $X$).


\medskip
\section{Cayley--Bacharach condition on Grassmannians}\label{section:sp} 

In this section, we are interested in the Cayley--Bacharach condition on Grassmannians.
In particular, we discuss linear subspaces of $\mathbb{P}^n$ in special position in \S \ref{sub:sp}, and we prove our results on the dimension of their linear span in \S \ref{sub:proof dimSP}.
Finally, we present in \S \ref{sub:remarks} further remarks and possible developments about the Cayley--Bacharach condition on Grassmannians.

\smallskip
\subsection{Linear subspaces of $\mathbb{P}^n$ in special position}\label{sub:sp} 
Let us fix two integers $1\leq k\leq n$. 
In this section, we are concerned with collections of $(k-1)$-dimensional linear spaces in $\mathbb{P}^n$ satisfying a property of Cayley-Bacharach type, defined as follows (cf. \cite[Definition 3.2]{B1}).
\begin{definition}\label{def:SP}
Let $\Lambda_1,\dots,\Lambda_d\subset \mathbb{P}^{n}$ be linear subspaces of dimension $k-1$.
We say that $\Lambda_1,\dots,\Lambda_d$ are \emph{in special position with respect to} $(n-k)$\emph{-planes}, or just $\SP(n-k)$, if for any $j=1,\dots,d$ and for any $(n-k)$-plane $L\subset \mathbb{P}^n$ intersecting $\Lambda_1,\dots,\widehat{\Lambda_j},\dots,\Lambda_d$, we have that $L$ meets $\Lambda_j$ too.
\end{definition}

Along the same lines as \cite[Section 3]{B1}, we are aimed at bounding the dimension of the linear span of collections of $(k-1)$-planes satisfying property $\SP(n-k)$.

\begin{remark}\label{rem:SP}
The connection between special position and the Cayley-Bacharach property may be expressed as follows.
We recall that, given a complete linear series $\mathcal{D}$ on a variety $X$, we say that $d$ points $P_1,\dots,P_d\in X$ satisfy the \emph{Cayley-Bacharach condition with respect to} $\mathcal{D}$ if for any $j=1,\dots,d$ and for any effective divisor $D\subset \mathcal{D}$ passing through $P_1,\dots,\widehat{P_j},\dots,P_d$, we have $P_j\in D$ too.

When $X=\mathbb{G}(k-1,n)$ is the Grassmannian of $(k-1)$-planes in $\mathbb{P}^n$, for any $(n-k)$-dimensional linear subspace $L\subset \mathbb{P}^n$, the Schubert cycle $\sigma_1(L):=\left\{[\Lambda]\in X|\Lambda\cap L\neq \emptyset\right\}$ is an effective divisor of $\left|\mathcal{O}_X(1)\right|$.
Thus if $[\Lambda_1],\dots,[\Lambda_d]\in X$ is a collection of points satisfying the Cayley-Bacharach condition with respect to $\left|\mathcal{O}_X(1)\right|$, then the corresponding $(k-1)$-planes $\Lambda_1,\dots,\Lambda_d\subset \mathbb{P}^{n}$ are $\SP(n-k)$.

Furthermore, if the collection $[\Lambda_1],\dots,[\Lambda_d]\in X$ satisfies the Cayley-Bacharach condition with respect to $\left|\mathcal{O}_X(r)\right|$ for some $r\geq 1$, the same holds with respect to $\left|\mathcal{O}_X(s)\right|$ for any $1\leq s\leq r$ and, in particular, the $(k-1)$-planes $\Lambda_1,\dots,\Lambda_d\subset \mathbb{P}^{n}$ are $\SP(n-k)$.
\end{remark}

\begin{remark}\label{rem:unionSP}
We note that if the $(k-1)$-planes $\Lambda_1,\dots,\Lambda_d$ are $\SP(n-k)$, then $d\geq 2$.
Moreover, the linear spaces $\Lambda_1,\dots,\Lambda_d$ are not necessarily distinct.
In particular, two $(k-1)$-planes $\Lambda_1,\Lambda_2$ are $\SP(n-k)$ if and only if they coincide.

Furthermore, if $\Lambda_1,\dots,\Lambda_d$ and $\Lambda'_{1},\dots,\Lambda'_e$ are two collections of $(k-1)$-planes that satisfy $\SP(n-k)$, then $\Lambda_1,\dots,\Lambda_d,\Lambda'_{1},\dots,\Lambda'_e$ are $\SP(n-k)$.
\end{remark}

We are now interested in sequences of $(k-1)$-planes satisfying property $\SP(n-k)$, that can not be partitioned in subsequences of $(k-1)$-planes that satisfy $\SP(n-k)$.
\begin{definition}\label{def:partition}
Let $\Lambda_1,\dots,\Lambda_d\subset \mathbb{P}^{n}$ be linear subspaces of dimension $k-1$ in special position with respect to $(n-k)$-planes.
We say that the sequence $\Lambda_1,\dots,\Lambda_d$ is \emph{decomposable} if there exists a partition of the set of indexes $\{1,\dots,d\}$, where each part $\{i_1,\dots,i_t\}$ is such that the corresponding $(k-1)$-planes $\Lambda_{i_1},\dots,\Lambda_{i_t}$ are $\SP(n-k)$.
Otherwise, we say that the sequence is \emph{indecomposable}.
\end{definition}

Then we can state the main result of this section, which bounds the dimension of the linear span of an indecomposable sequence of $(k-1)$-planes satisfying property $\SP(n-k)$.

\begin{theorem}\label{thm:dimSP}
Let $\Lambda_1,\dots,\Lambda_d\subset \mathbb{P}^{n}$ be an indecomposable sequence of linear subspaces of dimension $k-1$ in special position with respect to $(n-k)$-planes.
Then
$$
\dim\Span\left(\Lambda_1,\dots,\Lambda_d\right)\leq d+k-3.
$$
\end{theorem}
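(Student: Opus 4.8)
The plan is to argue by induction on the number $d$ of planes. Two reductions come first. If $n\le d+k-3$ there is nothing to prove, since $\dim\Span(\Lambda_1,\dots,\Lambda_d)\le n\le d+k-3$; so we may assume $n\ge d+k-2$, and in particular $n\ge k$, so that projections from a point of $\mathbb{P}^n$ land in a projective space still containing $(k-1)$-planes. Next, if all the $\Lambda_i$ coincide then $\Span(\Lambda_1,\dots,\Lambda_d)=\Lambda_1$ has dimension $k-1\le d+k-3$; and if they do not all coincide but some of them do, one checks, using Remark \ref{rem:unionSP}, that deleting a plane equal to another remaining one produces a strictly shorter sequence which is again in special position and again indecomposable, with unchanged span, so that the inductive hypothesis applies. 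Hence we may assume the $\Lambda_i$ pairwise distinct. The base case $d=2$ is then immediate: by Remark \ref{rem:unionSP} one has $\Lambda_1=\Lambda_2$, so $\dim\Span(\Lambda_1,\Lambda_2)=k-1=d+k-3$.

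For the inductive step (so $d\ge 3$ and the $\Lambda_i$ distinct), pick a general point $p\in\Lambda_d$ and consider the projection $\pi_p\colon\mathbb{P}^n\dashrightarrow\mathbb{P}^{n-1}$ from $p$. Since the $\Lambda_i$ are distinct, $p\notin\Lambda_i$ for $i<d$, so $\Lambda'_i:=\pi_p(\Lambda_i)$ is again a $(k-1)$-plane for $i<d$, while $\pi_p(\Lambda_d)$ is a $(k-2)$-plane. One first shows that $\Gamma'=\{\Lambda'_1,\dots,\Lambda'_{d-1}\}$ is in special position with respect to $(n-1-k)$-planes: given an $(n-1-k)$-plane $L'\subset\mathbb{P}^{n-1}$ meeting all but one of the $\Lambda'_i$, lift it to the cone $L:=\overline{\pi_p^{-1}(L')}$, an $(n-k)$-plane through $p$; then $L$ meets $\Lambda_d$ (because $p\in L\cap\Lambda_d$) and meets $\Lambda_i$ for $i<d$ whenever $L'$ meets $\Lambda'_i$ (the preimage of the intersection point lies in $L\cap\Lambda_i$), so by $\SP(n-k)$ of the original sequence $L$ meets the remaining plane, and pushing forward shows that $L'$ meets the remaining $\Lambda'_j$. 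Moreover, since $p\in\Lambda_d\subset\Span(\Lambda_1,\dots,\Lambda_d)$, projecting from $p$ drops the dimension of this span by exactly one, so that
$$
\dim\Span(\Lambda_1,\dots,\Lambda_d)=\dim\Span(\Lambda'_1,\dots,\Lambda'_{d-1},\pi_p(\Lambda_d))+1.
$$

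Now distinguish two cases. If $\Gamma'$ is indecomposable, the inductive hypothesis gives $\dim\Span(\Lambda'_1,\dots,\Lambda'_{d-1})\le(d-1)+k-3=d+k-4$, and what makes this case "easy" is the auxiliary fact --- one of the basic properties of sequences in special position --- that $\Lambda_d$ meets $\Span(\Lambda_1,\dots,\Lambda_{d-1})$ in at least a hyperplane of $\Lambda_d$. This is equivalent to $\pi_p(\Lambda_d)\subset\Span(\Lambda'_1,\dots,\Lambda'_{d-1})$, so the displayed identity collapses to $\dim\Span(\Lambda_1,\dots,\Lambda_d)\le(d+k-4)+1=d+k-3$, as desired. (When $k=1$ the plane $\pi_p(\Lambda_d)$ is empty and this input is not needed.)

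The case in which $\Gamma'$ is decomposable is the technical heart of the argument, and the step I expect to be the real obstacle. Write $\Gamma'=\Gamma'_1\sqcup\dots\sqcup\Gamma'_s$ with $s\ge2$, where each $\Gamma'_t=\{\Lambda'_j:j\in S_t\}$ is in special position and has fewer than $d$ elements, so that the inductive hypothesis --- in the slightly more general form allowing decomposable sequences, Corollary \ref{cor:dimSP} --- bounds each $\dim\Span(\Gamma'_t)$. The tension to resolve is that the original sequence is indecomposable even though its projection is not: projecting from $p$ may create special position that was absent in $\mathbb{P}^n$. The plan is to determine, part by part, whether the lifted sub-sequences $\{\Lambda_j:j\in S_t\}$ --- and the augmented sub-sequences $\{\Lambda_j:j\in S_t\}\cup\{\Lambda_d\}$ --- are themselves in special position in $\mathbb{P}^n$; indecomposability of the whole sequence forbids any partition of $\{1,\dots,d\}$ into $\SP$ parts, which severely restricts these possibilities and, in the end, forces $\Lambda_d$ together with its linear span to tie together at least two of the pieces $\Gamma'_t$. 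One then reconstructs $\Span(\Lambda_1,\dots,\Lambda_d)$ out of the spans of the pieces, the point $p$, and the way $\Lambda_d$ meets those spans, and adds up dimensions, using the forced overlaps to absorb the $s-1$ "extra" dimensions one would otherwise pay for gluing $s$ pieces together. Organizing this bookkeeping into a finite list of sub-cases according to the special-position behavior of the pieces and of $\Lambda_d$, and verifying the dimension count in each, is the long and delicate part of the proof.
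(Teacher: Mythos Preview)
Your overall plan matches the paper's proof: induct on $d$, project from a general point of a suitable $\Lambda_d$, handle the indecomposable projection by induction plus the fact that $\Lambda_d$ lies in the span of the others, and treat the decomposable projection by a careful case analysis. Two points deserve comment.

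First, your reduction to pairwise distinct planes is not justified. You assert that if $\Lambda_1=\Lambda_2$ then deleting $\Lambda_2$ leaves a sequence that is still $\SP(n-k)$, citing Remark~\ref{rem:unionSP}; but that remark says nothing of the sort. The problematic case is an $(n-k)$-plane $L$ meeting $\Lambda_3,\dots,\Lambda_d$ but missing $\Lambda_1$: in the original sequence $L$ misses both $\Lambda_1$ and $\Lambda_2$, so the $\SP$ hypothesis is silent. The paper avoids this entirely: indecomposability alone guarantees that \emph{some} $\Lambda_j$ is distinct from all the others (otherwise grouping equal planes gives a decomposition into parts of size $\ge 2$, each trivially $\SP$), and one projects from a general point of that $\Lambda_j$. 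This is all that is needed; your stronger reduction is unnecessary, and as stated has a gap.

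Second, your ``auxiliary fact'' in the indecomposable case is weaker than what actually holds: Lemma~\ref{lem:B1} gives $\Lambda_d\subset\Span(\Lambda_1,\dots,\Lambda_{d-1})$ outright, not merely that $\Lambda_d$ meets this span in a hyperplane. Your weaker version still suffices, but the stronger one is just as easy and is what the paper uses. For the decomposable case, your sketch is accurate in spirit; the paper organizes the bookkeeping via two lemmas. Lemma~\ref{lem:sopraSP} shows that for each part $D_j$ of the projected decomposition, either $\Lambda_d\subset S_j:=\Span(\Lambda_i:i\in D_j)$ or the lifted planes $\{\Lambda_i:i\in D_j\}$ are themselves $\SP(n-k)$ in $\mathbb{P}^n$. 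Lemma~\ref{lem:claimSP} then carries out the dimension count you describe, exploiting Lemma~\ref{lem:intersezioneNonSP} (which forces $\dim(S_j\cap\Span(\text{rest}))\ge k-1$ whenever the rest is not $\SP$) to absorb the gluing cost. This is indeed where the real work lies, and your proposal does not yet contain it.
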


Before proving the theorem, we deduce the following corollary, which refines the bound given in \cite[Theorem 3.3]{B1} on the dimension of the linear span of a sequence of $(k-1)$-planes satisfying $\SP(n-k)$.
Furthermore, this result shall play a crucial role in computing the covering gonality of symmetric products of curves.

\begin{corollary}\label{cor:dimSP}
Let $\Lambda_1,\dots,\Lambda_d\subset \mathbb{P}^{n}$ be linear subspaces of dimension $k-1$ in special position with respect to $(n-k)$-planes.
Consider a partition of the set of indexes $\{1,\dots,d\}$ such that each part $\{i_1,\dots,i_t\}$ corresponds to an indecomposable sequence $\Lambda_{i_1},\dots,\Lambda_{i_t}$ of $(k-1)$-planes that satisfy $\SP(n-k)$.
Denoting by $m$ the number of parts of the partition, we have that
$$
\dim\Span\left(\Lambda_1,\dots,\Lambda_d\right)\leq d+k-3+(m-1)(k-2).
$$
\begin{proof}
For any $i=1,\dots m$, let $d_i$ be the number of integers contained in the $i$-th part, so that $d_1+\dots+d_m=d$.
Since any part indexes an indecomposable sequence of $d_i$ linear spaces satisfying $\SP(n-k)$, Theorem \ref{thm:dimSP} ensures that the span of those linear spaces has dimension at most $d_i+k-3$.
Thus 
$$\dim\Span\left(\Lambda_{1},\dots,\Lambda_{d}\right)  \leq \left(d_1+k-3\right) + \sum_{i=2}^m \left(d_i+k-2\right)= d+k-3+(m-1)(k-2),$$
as claimed.
\end{proof}
\end{corollary}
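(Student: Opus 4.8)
The plan is to reduce directly to Theorem \ref{thm:dimSP} and then control the dimension of the span of the resulting subspaces by elementary linear algebra. First I would fix notation: for each $i=1,\dots,m$, let $d_i$ denote the cardinality of the $i$-th part of the partition, so that $d_1+\dots+d_m=d$, and set $V_i:=\Span\left(\Lambda_{i_1},\dots,\Lambda_{i_{d_i}}\right)\subset \mathbb{P}^n$ to be the linear span of the $(k-1)$-planes indexed by that part.

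Since each part indexes, by hypothesis, an indecomposable sequence of $d_i$ linear subspaces of dimension $k-1$ satisfying $\SP(n-k)$, Theorem \ref{thm:dimSP} applies verbatim to each of them and yields the per-part estimate $\dim V_i\leq d_i+k-3$ for every $i=1,\dots,m$.

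The remaining step is to assemble these bounds. Here I would invoke the standard subadditivity of the span: for linear subspaces $V_1,\dots,V_m\subset \mathbb{P}^n$ one has $\dim\Span(V_1,\dots,V_m)\leq \dim V_1+\sum_{i=2}^m(\dim V_i+1)$, since adjoining each further subspace $V_i$ raises the projective dimension by at most $\dim V_i+1$ (equivalently, the affine cone of the total span is the sum of the $m$ affine cones, whose dimensions add). Substituting the per-part estimates gives
$$
\dim\Span\left(\Lambda_1,\dots,\Lambda_d\right)\leq (d_1+k-3)+\sum_{i=2}^m(d_i+k-2),
$$
and a short rearrangement of the right-hand side, using $d_1+\dots+d_m=d$, produces $d+k-3+(m-1)(k-2)$, as desired.

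The substantive content of the corollary resides entirely in Theorem \ref{thm:dimSP}; the deduction above is routine and presents no genuine obstacle. The only point demanding a little care is the bookkeeping of projective versus affine dimensions in the subadditivity estimate — specifically, that the first part contributes $d_1+k-3$ while each of the remaining $m-1$ parts contributes an extra $+1$. This is exactly the mechanism producing the correction term $(m-1)(k-2)$, and it also explains why an indecomposable collection, the case $m=1$, saturates the sharper bound $d+k-3$ of the theorem.
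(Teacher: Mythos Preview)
Your proof is correct and follows essentially the same approach as the paper's: apply Theorem~\ref{thm:dimSP} to each part to get $\dim V_i\leq d_i+k-3$, then use subadditivity of the projective span to obtain $(d_1+k-3)+\sum_{i=2}^m(d_i+k-2)=d+k-3+(m-1)(k-2)$. The only difference is that you spell out the affine-cone justification for the subadditivity step, which the paper leaves implicit.
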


\begin{remark}
A partition as in Corollary \ref{cor:dimSP} always exists. In particular, if the sequence is indecomposable, we have the trivial partition $\{1,\dots,d\}$ and we recover Theorem \ref{thm:dimSP}.
Furthermore, the partition is not necessarily unique, and the best bound is given by partitions having the least number of parts.
\end{remark}

\begin{remark}
If $k\geq 2$ and $\Lambda_1,\dots,\Lambda_d\subset \mathbb{P}^{n}$ are $(k-1)$-planes that satisfy $\SP(n-k)$, then \cite[Theorem 3.3]{B1} asserts that $\dim \Span(\Lambda_1,\dots,\Lambda_d)\leq \lfloor \frac{kd}{2}\rfloor-1$.
Thus Theorem \ref{thm:dimSP} gives a strong improvement of this bound when the sequence $\Lambda_1,\dots,\Lambda_d$ is indecomposable and $k\geq 3$.

It is also easy to see that Corollary \ref{cor:dimSP} improves \cite[Theorem 3.3]{B1}, apart from the case $k=2$ and for few exceptional configurations of the $(k-1)$-planes $\Lambda_i$, where the two bounds coincide. 
Moreover, both Theorem \ref{thm:dimSP} and Corollary \ref{cor:dimSP} turn out to be sharp (see e.g. \cite[Examples 3.5 and 3.6]{B1}).
\end{remark}

\smallskip
\subsection{Proof of Theorem \ref{thm:dimSP}}\label{sub:proof dimSP}

The proof of Theorem \ref{thm:dimSP} is quite long and relies on various simple properties of linear spaces in special position. 
The idea is to consider the images of the planes $\Lambda_1,\dots,\Lambda_{d-1}$ under the projection $\pi_p\colon \mathbb{P}^n\dashrightarrow \mathbb{P}^{n-1}$ from a general point $p\in \Lambda_d$.
If $\pi_p(\Lambda_1),\dots,\pi_p(\Lambda_{d-1})\subset \mathbb{P}^{n-1}$ is an indecomposable sequence of $(k-1)$-planes satisfying $\SP(n-k)$, the assertion follows easily by induction. 
On the other hand, we need a big amount of work to prove the assertion when the sequence of linear spaces $\pi_p(\Lambda_i)$ is decomposable.

So, in order to prove Theorem \ref{thm:dimSP}, we firstly present various preliminary lemmas, which express properties of linear spaces in special position.

\begin{lemma}\label{lem:B1}
Let $\Lambda_1,\dots,\Lambda_d\subset \mathbb{P}^{n}$ be $(k-1)$-planes that satisfy $\SP(n-k)$.
If $\Lambda_1,\dots,\Lambda_{d-1}$ are contained in a linear space $S$, then $\Lambda_d\subset S$ too.
\begin{proof}
See \cite[Lemma 3.4]{B1}.
\end{proof}
\end{lemma}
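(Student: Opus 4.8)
The plan is to prove the statement by a direct dimension-count argument, exploiting the definition of $\SP(n-k)$ together with a generic choice of linear subspace playing the role of the $(n-k)$-plane $L$. Suppose, for contradiction, that $\Lambda_1,\dots,\Lambda_{d-1}\subset S$ but $\Lambda_d\not\subset S$. The idea is to build an $(n-k)$-plane $L$ that meets $\Lambda_1,\dots,\Lambda_{d-1}$ but misses $\Lambda_d$, contradicting property $\SP(n-k)$. Since $\Lambda_1,\dots,\Lambda_{d-1}$ all sit inside $S$, it suffices to find such an $L$ by first choosing a suitable linear subspace $L'$ inside $S$ that meets each $\Lambda_i$ (for $i\leq d-1$), and then extending $L'$ to an $(n-k)$-plane $L$ in $\mathbb{P}^n$ in a sufficiently general way so that $L\cap\Lambda_d=\emptyset$.

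First I would record the relevant dimensions: $\dim S=:s$, and each $\Lambda_i$ has dimension $k-1$. Inside $S$, a general linear subspace $L'$ of dimension $s-n+k-1$ — if this is nonnegative — already meets every $(k-1)$-plane contained in $S$, because in $S\cong\mathbb{P}^s$ two linear subspaces of dimensions $a$ and $b$ with $a+b\geq s$ always intersect, and here $(s-n+k-1)+(k-1) = s + (2k-n-2) \geq s$ is not automatic; so instead one should choose $L'$ of dimension exactly $n-k$ intersected with the constraint of lying in $S$, i.e. take $L'$ to be a general $(n-k)$-plane of $S$ when $n-k\leq s$, which meets each $\Lambda_i$ provided $(n-k)+(k-1)\geq s$, that is $s\leq n-1$ — always true since $\Lambda_d\not\subset S$ forces $S\neq\mathbb{P}^n$. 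Then, since $L'$ is an $(n-k)$-plane of $\mathbb{P}^n$ already, I simply take $L=L'$: a general $(n-k)$-plane of $S$ avoids the point(s) of $\Lambda_d$ not lying in $S$, and more care shows it avoids all of $\Lambda_d$, because $\Lambda_d\cap S$ has dimension at most $k-2$ and a general $(n-k)$-plane of $S$ (of dimension $n-k$, with $(n-k)+(k-2)=n-2<s$ generically) misses a fixed $(\leq k-2)$-dimensional subspace of $S$ whenever $s> n-2$; the boundary cases where $s=n-1$ need the genericity of $L'$ to be invoked explicitly. This yields an $(n-k)$-plane meeting $\Lambda_1,\dots,\Lambda_{d-1}$ but not $\Lambda_d$, contradicting $\SP(n-k)$.

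The main obstacle is handling the dimension bookkeeping cleanly across the borderline cases — in particular when $S$ is a hyperplane ($s=n-1$), where one cannot simply say "a general $(n-k)$-plane of $S$ misses $\Lambda_d\cap S$", and one must instead argue that among the $(n-k)$-planes of $S$ meeting all of $\Lambda_1,\dots,\Lambda_{d-1}$ there is at least one avoiding the fixed proper subspace $\Lambda_d\cap S\subsetneq\Lambda_d$. Since this is exactly the content of \cite[Lemma 3.4]{B1}, and the present excerpt explicitly defers to that reference, I would simply cite it; a self-contained write-up would carry out the incidence-variety dimension count sketched above, checking that the locus of bad $(n-k)$-planes is a proper subvariety of the relevant Schubert-type parameter space.
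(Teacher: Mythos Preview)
Your overall strategy --- assume $\Lambda_d\not\subset S$ and build an $(n-k)$-plane $L$ meeting $\Lambda_1,\dots,\Lambda_{d-1}$ but missing $\Lambda_d$ --- is exactly the idea behind \cite[Lemma~3.4]{B1}, and the paper itself simply defers to that reference. However, the concrete construction you sketch has a dimension error that causes it to fail in most cases, not just a ``boundary'' one.

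Taking $L'$ to be a general $(n-k)$-plane \emph{inside} $S$ only works when $s:=\dim S=n-1$. For $n-k\le s\le n-2$, a general $(n-k)$-plane of $S\cong\mathbb{P}^s$ \emph{does} meet any fixed $(k-2)$-plane of $S$, since $(n-k)+(k-2)=n-2\ge s$; and $\Lambda_d\cap S$ may perfectly well have dimension exactly $k-2$. You also do not treat the range $k-1\le s<n-k$ at all. So the case you single out as problematic, $s=n-1$, is in fact the \emph{only} one your construction handles; the genuine gap is the entire range $s\le n-2$.

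The fix is to choose inside $S$ a general linear subspace $T$ of dimension $s-k+1$ rather than $n-k$: this is the minimal dimension forcing $T$ to meet every $(k-1)$-plane contained in $S$, and now the count $(s-k+1)+(k-2)=s-1<s$ shows that a general such $T$ misses $\Lambda_d\cap S$, hence misses $\Lambda_d$ altogether. Since $\dim T+\dim\Lambda_d=s\le n-1<n$, one can then extend $T$ to an $(n-k)$-plane $L\supset T$ in $\mathbb{P}^n$ still disjoint from $\Lambda_d$, giving the required contradiction with $\SP(n-k)$. This is precisely the manoeuvre carried out in the paper's own proof of Lemma~\ref{lem:BVT} (which generalises the present lemma), so you can see the argument spelled out there.
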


\begin{lemma}\label{lem:BVT}
Let $\Lambda_1,\dots,\Lambda_d\subset\mathbb{P}^n$ be $(k-1)$-planes that satisfy $\SP(n-k)$. 
Let $1 \leq r \leq d-2$ be an integer and let $(p_{r+1},\dots,p_{d-1})\in \Lambda_{r+1}\times\dots\times\Lambda_{d-1}$ be a $(d-1-r)$-tuple such that 
$$\Span(p_{r+1},\dots,p_{d-1})\cap \Lambda_d = \emptyset.$$
Then $\Lambda_d\subset\Span(\Lambda_1,\dots,\Lambda_r ,p_{r+1},\dots,p_{d-1})$. 

\begin{proof} Let $S:=\Span(\Lambda_1,\dots,\Lambda_r ,p_{r+1},\dots,p_{d-1})$ and let $s:=\dim S$. 
If $S$ is the whole projective space $\mathbb{P}^n$ there is nothing to prove, hence we assume hereafter $s < n$. 
Moreover, $s \geq k - 1$ and hence $0 \leq s - k + 1 \leq n - k$.

Let $P:=\Span(p_{r+1},\dots,p_{d-1})\subset S$. 
By assumption, $P$ does not meet $\Lambda_d$, therefore $\dim P\leq n-k$. 
Furthermore, $\dim P \leq s -k$. 
Indeed, if the dimension of $P$ were greater than $s - k$, we would have $P \cap \Lambda_j\neq \emptyset$ for any $j=1,\dots,r$. 
Therefore $P$ would meet each $(k - 1)$-plane $\Lambda_j$, with $j=1,\dots,d-1$. 
Because of condition $\SP(n-k)$, any $(n-k)$-plane $L \subset\mathbb{P}^n$ containing $P$ should intersect also $\Lambda_d$. 
Then we would have $P\cap \Lambda_d\neq\emptyset$, a contradiction.

Now, let $T \subset S$ be a $(s -k + 1)$-plane containing $P$. 
Then $T$ meets each of the $(k-1)$-planes $\Lambda_1,\dots,\Lambda_{d-1}$. 
By condition $\SP(n-k)$, any $(n-k)$-plane $L$ containing $T$ must intersect the remaining plane $\Lambda_d$. 
Therefore $\Lambda_d\cap T\neq \emptyset$ for any $(s -k +1)$-plane $T$ as above. 
Finally, as $\Lambda_d \cap P =\emptyset$ by assumption, we deduce that $\Lambda_d\subset S$. 
\end{proof}
\end{lemma}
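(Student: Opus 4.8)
The plan is to exploit condition $\SP(n-k)$ by producing linear subspaces that meet all of $\Lambda_1,\dots,\Lambda_{d-1}$, so that they are forced to meet $\Lambda_d$ as well, and then to use this to trap $\Lambda_d$ inside $S:=\Span(\Lambda_1,\dots,\Lambda_r,p_{r+1},\dots,p_{d-1})$. Write $P:=\Span(p_{r+1},\dots,p_{d-1})$ and $s:=\dim S$; if $s=n$ there is nothing to prove, so assume $s<n$. The one elementary fact I would isolate first is a \emph{spanning criterion}: if $V\subset\mathbb{P}^n$ is a linear space with $\dim V\leq n-k$ and \emph{every} $(n-k)$-plane containing $V$ meets $\Lambda_d$, then $V\cap\Lambda_d\neq\emptyset$. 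This follows from the contrapositive, since when $V\cap\Lambda_d=\emptyset$ a general $(n-k)$-plane through $V$ meets the $(k-1)$-plane $\Lambda_d$ in expected dimension $(n-k)+(k-1)-n=-1$, hence is disjoint from it (the case $\dim V=n-k$ being trivial, as then $V$ is the only such plane).

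Next I would pin down the dimension of $P$. Since $P\cap\Lambda_d=\emptyset$, disjointness in $\mathbb{P}^n$ already gives $\dim P\leq n-k$. I then claim the sharper bound $\dim P\leq s-k$. Indeed, if instead $\dim P\geq s-k+1$, then for each $j=1,\dots,r$ the planes $\Lambda_j\subset S$ and $P\subset S$ satisfy $\dim\Lambda_j+\dim P\geq(k-1)+(s-k+1)=s=\dim S$, so they meet inside $S$; together with $p_j\in P\cap\Lambda_j$ for $j=r+1,\dots,d-1$, this shows $P$ meets every one of $\Lambda_1,\dots,\Lambda_{d-1}$. Then every $(n-k)$-plane through $P$ meets $\Lambda_1,\dots,\Lambda_{d-1}$ and hence, by $\SP(n-k)$, meets $\Lambda_d$; the spanning criterion forces $P\cap\Lambda_d\neq\emptyset$, contradicting the hypothesis. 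So $\dim P\leq s-k$.

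With this bound in hand, the core step is to enlarge $P$: choose any $(s-k+1)$-plane $T$ with $P\subseteq T\subseteq S$, which exists precisely because $\dim P\leq s-k$. I would check that such a $T$ meets each $\Lambda_j$ with $j=1,\dots,d-1$: for $j\leq r$ by the dimension count $\dim T+\dim\Lambda_j=(s-k+1)+(k-1)=s=\dim S$ inside $S$, and for $j>r$ because $p_j\in P\subseteq T$. Since $s<n$ yields $\dim T=s-k+1\leq n-k$, every $(n-k)$-plane containing $T$ meets $\Lambda_1,\dots,\Lambda_{d-1}$, hence meets $\Lambda_d$ by $\SP(n-k)$; applying the spanning criterion to $V=T$ gives $T\cap\Lambda_d\neq\emptyset$ for \emph{every} $(s-k+1)$-plane $T$ between $P$ and $S$.

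Finally I would convert this into the containment $\Lambda_d\subseteq S$ by projecting from $P$. Under $\pi_P\colon\mathbb{P}^n\dashrightarrow\mathbb{P}^{n-\dim P-1}$ the disjointness $P\cap\Lambda_d=\emptyset$ guarantees that $\Lambda_d$ maps isomorphically onto a $(k-1)$-plane $\overline{\Lambda_d}$, while $S$ maps to a linear space $\overline{S}$ of dimension $s-\dim P-1$, and the $(s-k+1)$-planes $T$ with $P\subseteq T\subseteq S$ correspond exactly to the codimension-$(k-1)$ subspaces of $\overline{S}$. The previous step thus says that every codimension-$(k-1)$ subspace of $\overline{S}$ meets $\overline{\Lambda_d}$; a dimension count shows this is impossible unless $\overline{\Lambda_d}\subseteq\overline{S}$, since otherwise $\overline{\Lambda_d}\cap\overline{S}$ has dimension at most $k-2$ and a general codimension-$(k-1)$ subspace of $\overline{S}$ avoids it. Pulling back, $\overline{\Lambda_d}\subseteq\overline{S}$ gives $\Lambda_d\subseteq\Span(P,S)=S$. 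I expect the two genericity arguments — the spanning criterion and the final passage from ``every codimension-$(k-1)$ subspace meets $\overline{\Lambda_d}$'' to containment — to be the only delicate points; both are elementary dimension counts, but they must be stated carefully to avoid an off-by-one error in the codimension bookkeeping.
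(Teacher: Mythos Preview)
Your proof is correct and follows essentially the same route as the paper: bound $\dim P\leq s-k$ via the $\SP(n-k)$ condition, show every $(s-k+1)$-plane $T$ with $P\subseteq T\subseteq S$ meets $\Lambda_d$, and conclude $\Lambda_d\subset S$. The only differences are presentational: you isolate the ``spanning criterion'' and unfold the last step via the projection $\pi_P$, whereas the paper leaves both implicit in the phrase ``as $\Lambda_d\cap P=\emptyset$ by assumption, we deduce that $\Lambda_d\subset S$.''
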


\begin{lemma}\label{lem:intersezioneNonSP}
Let $\Lambda_1,\dots,\Lambda_d\subset\mathbb{P}^n$ be $(k-1)$-planes that satisfy $\SP(n-k)$. 
Let $2\leq r \leq d-1$ be an integer such that $\Lambda_{r+1},\dots,\Lambda_d$ are not $\SP(n-k)$. 
Then
$$\dim\big(\Span(\Lambda_1,\dots,\Lambda_r)\cap\Span(\Lambda_{r+1},\dots,\Lambda_d)\big)\geq k-1.$$

\begin{proof} 
By Lemma \ref{lem:B1}, the $(k-1)$-plane $\Lambda_d$ is contained in $\Span(\Lambda_1,\dots,\Lambda_{d-1})$. Hence the assertion holds for $r=d-1$.
 
So we assume $2\leq r \leq d-2$.
Since $\Lambda_{r+1},\dots,\Lambda_d$ are not $\SP(n-k)$, there exists a $(n-k)$-plane $L$ intersecting all the $(k-1)$-planes $\Lambda_{r+1},\dots,\Lambda_d$ except one, say $\Lambda_d$. 
For any $j=r+1,\dots,d-1$, we fix a point $p_j\in L\cap \Lambda_j$.
Let us define $P:=\Span(p_{r+1},\dots,p_{d-1})\subset L$ and $S:=\Span(\Lambda_{1},\dots,\Lambda_r,p_{r+1},\dots,p_{d-1})$.
Hence $P\cap \Lambda_d=\emptyset$, and Lemma \ref{lem:BVT} ensures that $\Lambda_d\subset S$. 

Setting $\alpha:=\dim P$ and $\beta:=\dim\Span(\Lambda_{1},\dots,\Lambda_r)$, we have that $\dim\Span(P,\Lambda_d)=\alpha+k$ and $\dim S\leq\alpha+\beta+1$.
Since $\Span(\Lambda_{1},\dots,\Lambda_r)$ and $\Span(P,\Lambda_d)$ are contained in $S$, we obtain
$$\dim \big(\Span(\Lambda_{1},\dots,\Lambda_r)\cap \Span(P,\Lambda_d)\big)\geq \beta+(\alpha+k)-(\alpha+\beta+1)=k-1.$$
Finally, as $\Span(P,\Lambda_d)\subset \Span(\Lambda_{r+1},\dots,\Lambda_d)$, the assertion follows. 
\end{proof}
\end{lemma}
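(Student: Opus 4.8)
The plan is to reduce the statement to the two preliminary lemmas \ref{lem:B1} and \ref{lem:BVT} together with the Grassmann dimension formula, treating the boundary value $r=d-1$ separately from the range $2\le r\le d-2$.

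First I would dispose of the case $r=d-1$. Here $\Span(\Lambda_{r+1},\dots,\Lambda_d)=\Lambda_d$, and since $\Lambda_1,\dots,\Lambda_d$ satisfy $\SP(n-k)$, Lemma \ref{lem:B1} gives $\Lambda_d\subseteq\Span(\Lambda_1,\dots,\Lambda_{d-1})$. Hence the intersection in the statement is exactly $\Lambda_d$, which has dimension $k-1$, and the bound holds with equality. (A single $(k-1)$-plane is never $\SP(n-k)$ by Remark \ref{rem:unionSP}, so the hypothesis that $\Lambda_{r+1},\dots,\Lambda_d$ are not $\SP(n-k)$ is automatically satisfied when $r=d-1$.)

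For $2\le r\le d-2$, the idea is to locate $\Lambda_d$ inside the span of $\Lambda_1,\dots,\Lambda_r$ together with only $d-1-r$ extra points, which keeps that span small. Since $\Lambda_{r+1},\dots,\Lambda_d$ are not $\SP(n-k)$, there is an $(n-k)$-plane $L$ meeting all of $\Lambda_{r+1},\dots,\Lambda_d$ but one; relabelling, assume it misses $\Lambda_d$ and meets $\Lambda_{r+1},\dots,\Lambda_{d-1}$. Choose $p_j\in L\cap\Lambda_j$ for $r+1\le j\le d-1$ and set $P:=\Span(p_{r+1},\dots,p_{d-1})$ and $S:=\Span(\Lambda_1,\dots,\Lambda_r,p_{r+1},\dots,p_{d-1})$. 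Then $P\subseteq L$ is disjoint from $\Lambda_d$, so Lemma \ref{lem:BVT} applies (its hypothesis $1\le r\le d-2$ holds) and yields $\Lambda_d\subseteq S$. Writing $\alpha:=\dim P$ and $\beta:=\dim\Span(\Lambda_1,\dots,\Lambda_r)$, disjointness of $P$ and $\Lambda_d$ gives $\dim\Span(P,\Lambda_d)=\alpha+k$, while $\dim S\le\alpha+\beta+1$; since $\Span(\Lambda_1,\dots,\Lambda_r)$ and $\Span(P,\Lambda_d)$ both lie in $S$, the Grassmann formula inside $S$ forces
$$\dim\big(\Span(\Lambda_1,\dots,\Lambda_r)\cap\Span(P,\Lambda_d)\big)\ge\beta+(\alpha+k)-(\alpha+\beta+1)=k-1.$$
Finally $\Span(P,\Lambda_d)\subseteq\Span(\Lambda_{r+1},\dots,\Lambda_d)$, so the intersection appearing in the statement contains the one just bounded, which closes the argument.

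The only genuinely substantive step is the middle one: extracting from the failure of $\SP(n-k)$ a plane $P$ disjoint from $\Lambda_d$, and then using Lemma \ref{lem:BVT} to trap $\Lambda_d$ — and hence all of $\Span(P,\Lambda_d)$ — inside $S$, whose dimension is controlled linearly in $\alpha$ and $\beta$. Everything else is routine juggling of dimensions of linear spans, so I expect no real obstacle beyond getting the relabelling and the hypotheses of Lemma \ref{lem:BVT} correctly aligned.
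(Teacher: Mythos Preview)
Your proposal is correct and follows essentially the same argument as the paper: the same split into $r=d-1$ (handled via Lemma~\ref{lem:B1}) and $2\le r\le d-2$ (choosing the witnessing $(n-k)$-plane $L$, the points $p_j$, and applying Lemma~\ref{lem:BVT} before the dimension count). Your write-up is in fact slightly more explicit about why the hypotheses of Lemma~\ref{lem:BVT} and the equality $\dim\Span(P,\Lambda_d)=\alpha+k$ hold.
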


Given a linear subspace $P\subset \mathbb{P}^n$ of dimension $\alpha:=\dim P\geq 0$, we denote by $\pi_P\colon\mathbb{P}^n \dashrightarrow \mathbb{P}^{n-\alpha-1}$ the projection from $P$, which is defined on $\mathbb{P}^n\smallsetminus P$.
Hence, if $\Lambda\subset \mathbb{P}^n$ is a linear subspace with $\dim(P\cap \Lambda)=\delta$, then $\dim \pi_P(\Lambda)=\dim \Lambda-\delta-1$, whereas for any linear subspace $R\subset \mathbb{P}^{n-\alpha-1}$, the Zariski closure of $\pi_P^{-1}(R)\subset \mathbb{P}^n$ is a linear subspace of dimension $\dim R+\alpha+1$ which contains $P$. 

\begin{lemma}\label{lem:sottosopraSP}
Let $\Lambda_1,\dots,\Lambda_d\subset\mathbb{P}^n$ be $(k-1)$-planes and let $P\subset \mathbb{P}^n$ be a linear subspace of dimension $\alpha\geq 0$ such that $P\cap \Span \left(\Lambda_1,\dots,\Lambda_d\right)=\emptyset$.
Using the notation above, consider the $(k-1)$-planes $\Gamma_i:=\pi_P(\Lambda_i)\subset \mathbb{P}^{n-\alpha-1}$, with $i=1,\dots,d$. 
Then $\Lambda_1,\dots,\Lambda_d\subset\mathbb{P}^n$ are $\SP(n-k)$ if and only if $\Gamma_1,\dots,\Gamma_d$ are $\SP(n-\alpha-1-k)$.

\begin{proof}
Suppose that $\Lambda_1,\dots,\Lambda_d\subset\mathbb{P}^n$ are $\SP(n-k)$ and let $R\subset  \mathbb{P}^{n-\alpha-1}$ be a $(n-\alpha-1-k)$-plane intersecting $\Gamma_1,\dots,\widehat{\Gamma_j},\ldots,\Gamma_{d}$, with $j\in\left\{1,\dots,d\right\}$.
Then the linear subspace $L:=\overline{\pi_P^{-1}(R)}\subset \mathbb{P}^n$ is a $(n-k)$-plane, which intersects $\Lambda_1,\dots,\widehat{\Lambda_j},\ldots,\Lambda_d$. 
Hence $L$ must meet $\Lambda_j$ too, so that $R=\pi_P(L)$ intersects $\Gamma_j=\pi_P(\Lambda_j)$. 
In conclusion, $\Gamma_1,\dots,\Gamma_d$ are $\SP(n-\alpha-1-k)$. 

Vice versa, suppose that $\Gamma_1,\dots,\Gamma_d$ are $\SP(n-\alpha-1-k)$ and let $L\subset \mathbb{P}^n$ be a $(n-k)$-plane intersecting $\Lambda_1,\dots,\widehat{\Lambda_j},\ldots,\Lambda_d$, with $j\in\left\{1,\dots,d\right\}$. 
For any $i=1,\dots,\widehat{j},\ldots,d$, we fix a point $p_i\in\Lambda_i\cap L$ and we define $N:=\Span(p_1,\dots,\widehat{p_j},\ldots,p_d)\subset L$.
Since $N\cap P=\emptyset$, the linear space $\pi_P(N)\subset \mathbb{P}^{n-\alpha-1}$ has dimension $\dim N\leq \dim L=n-k$, and it meets $\Gamma_1,\dots,\widehat{\Gamma_j},\ldots,\Gamma_{d}$.

We claim that $\pi_P(N)$ must intersect $\Gamma_j$ too.
Indeed, if $\dim \pi_P(N)\leq n-\alpha-1-k$, condition $\SP(n-\alpha-1-k)$ yields that any $(n-\alpha-1-k)$-plane containing $\pi_P(N)$ must meet $\Gamma_j$, so that $\pi_P(N)\cap \Gamma_j\neq \emptyset$.
If instead $\dim \pi_P(N)\geq n-\alpha-k$, then $\pi_P(N)$ meets $\Gamma_j\subset \mathbb{P}^{n-\alpha-1}$ as $\dim\Gamma_j=k-1$.

It follows that $\widetilde{N}:=\overline{\pi_P^{-1}\left(\pi_P(N)\right)}$ intersects $\Lambda_j$.
Denoting $S:= \Span \left(\Lambda_1,\dots,\Lambda_d\right)$, we have $P\cap S=\emptyset$ by assumption, so the restriction $\pi_{P}|_S\colon S\longrightarrow \mathbb{P}^{n-\alpha-1}$ maps isomorphically to its image. 
Thus $\widetilde{N}\cap S=N$, as $\pi_P(\widetilde{N}\cap S)=\pi_P(N)$.
Since $\widetilde{N}\cap \Lambda_j\neq \emptyset$ and $\Lambda_j\subset S$, we deduce that $N\subset L$ intersects $\Lambda_j$, and we conclude that $\Lambda_1,\dots,\Lambda_d\subset\mathbb{P}^n$ are $\SP(n-k)$.
\end{proof}
\end{lemma}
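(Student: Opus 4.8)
The plan is to establish the two implications separately, after recording the two structural facts about the projection that drive everything. Write $S:=\Span(\Lambda_1,\dots,\Lambda_d)$. Since $P\cap S=\emptyset$, the restriction $\pi_P|_S\colon S\longrightarrow \pi_P(S)$ is an isomorphism onto its image; in particular each $\Gamma_i=\pi_P(\Lambda_i)$ is genuinely a $(k-1)$-plane, for every linear subspace $N\subseteq S$ one has $\dim\pi_P(N)=\dim N$, and $\overline{\pi_P^{-1}(\pi_P(N))}\cap S=N$. Beyond these, the only inputs I need are the dimension formulas for $\pi_P$ recalled before the lemma, namely that the preimage cone $\overline{\pi_P^{-1}(R)}$ of a linear space $R\subset\mathbb{P}^{n-\alpha-1}$ is a linear space of dimension $\dim R+\alpha+1$ containing $P$.

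For the forward implication, assume the $\Lambda_i$ are $\SP(n-k)$ and let $R\subset\mathbb{P}^{n-\alpha-1}$ be an $(n-\alpha-1-k)$-plane meeting $\Gamma_i$ for all $i\neq j$. Its preimage cone $L:=\overline{\pi_P^{-1}(R)}$ contains $P$ and has dimension $\dim R+\alpha+1=n-k$, hence is an $(n-k)$-plane. For each $i\neq j$, picking $y\in R\cap\Gamma_i$ and a preimage $x\in\Lambda_i$ with $\pi_P(x)=y$ shows $x\in L\cap\Lambda_i$, so $L$ meets every $\Lambda_i$ with $i\neq j$. Condition $\SP(n-k)$ then forces $L\cap\Lambda_j\neq\emptyset$; choosing $q\in L\cap\Lambda_j$ (so $q\notin P$ as $q\in\Lambda_j\subset S$), the point $\pi_P(q)$ lies in $\pi_P(L)=R$ and in $\Gamma_j$, whence $R\cap\Gamma_j\neq\emptyset$. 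This proves the $\Gamma_i$ are $\SP(n-\alpha-1-k)$.

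For the converse, the difficulty is that an arbitrary $(n-k)$-plane $L\subset\mathbb{P}^n$ meeting $\Lambda_i$ ($i\neq j$) projects generically to a space of dimension $n-k$, too large for the hypothesis $\SP(n-\alpha-1-k)$ to apply directly. The key idea is to shrink $L$: choosing one point $p_i\in\Lambda_i\cap L$ for each $i\neq j$ and setting $N:=\Span(p_i\mid i\neq j)$, I obtain a subspace of $L$ that still meets every $\Lambda_i$ with $i\neq j$ and, crucially, satisfies $N\subseteq S$, so $N\cap P=\emptyset$ and $\dim\pi_P(N)=\dim N\leq n-k$. I then split into two cases. If $\dim\pi_P(N)\leq n-\alpha-1-k$, then applying $\SP(n-\alpha-1-k)$ to any $(n-\alpha-1-k)$-plane through $\pi_P(N)$ forces $\pi_P(N)\cap\Gamma_j\neq\emptyset$. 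If instead $\dim\pi_P(N)\geq n-\alpha-k$, then $\dim\pi_P(N)+\dim\Gamma_j\geq(n-\alpha-k)+(k-1)=n-\alpha-1$, so $\pi_P(N)$ and $\Gamma_j$ meet inside $\mathbb{P}^{n-\alpha-1}$ for dimension reasons. Either way $\pi_P(N)\cap\Gamma_j\neq\emptyset$.

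Finally I transfer the intersection back upstairs. Put $\widetilde N:=\overline{\pi_P^{-1}(\pi_P(N))}$; since $\pi_P(N)$ meets $\Gamma_j=\pi_P(\Lambda_j)$, the space $\widetilde N$ meets $\Lambda_j$, say at $w\in\Lambda_j\subseteq S$. Because $\pi_P|_S$ is an isomorphism I have $\widetilde N\cap S=N$, so $w\in N$; thus $w\in N\cap\Lambda_j$, and as $N\subseteq L$ the original $(n-k)$-plane $L$ meets $\Lambda_j$, proving the $\Lambda_i$ are $\SP(n-k)$. The two steps requiring genuine care are the dimension dichotomy for $\pi_P(N)$ and the isomorphism-on-$S$ argument that legitimately pushes the intersection from $\widetilde N$ back to $N$; everything else is bookkeeping with the projection formulas, and the reverse implication is the main obstacle precisely because of the need to replace $L$ by the auxiliary subspace $N$.
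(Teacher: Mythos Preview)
Your proof is correct and follows essentially the same approach as the paper's: both directions are handled identically, including the forward lift via $L=\overline{\pi_P^{-1}(R)}$, the key replacement of $L$ by the auxiliary span $N=\Span(p_i\mid i\neq j)\subseteq S\cap L$ in the converse, the same dimension dichotomy for $\pi_P(N)$, and the transfer back using $\widetilde{N}\cap S=N$ from the injectivity of $\pi_P|_S$. Your write-up is slightly more explicit in places (e.g., recording $N\subseteq S$ and the dimension count in the second branch), but there is no substantive difference.
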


\begin{lemma}\label{lem:sottoSP}
Let $\Lambda_1,\dots,\Lambda_d\subset\mathbb{P}^n$ be $(k-1)$-planes that satisfy $\SP(n-k)$ and let $P\subset \mathbb{P}^n$ be a linear subspace of dimension $\alpha\geq 0$. 
Let $2\leq r\leq d-1$ be an integer such that $P\cap \Lambda_i=\emptyset$ for any $i=1,\dots,r$, and $P\cap \Lambda_i\neq \emptyset$ for any $i=r+1,\dots,d$.
Using the notation above, consider the $(k-1)$-planes $\Gamma_i:=\pi_P(\Lambda_i)\subset \mathbb{P}^{n-\alpha-1}$, with $i=1,\dots,r$. 
Then $\Gamma_1,\dots,\Gamma_r$ are $\SP(n-\alpha-1-k)$.

\begin{proof}
By arguing as in the previous proof, we consider a $(n-\alpha-1-k)$-plane $R\subset  \mathbb{P}^{n-\alpha-1}$ intersecting $\Gamma_1,\dots,\widehat{\Gamma_j},\ldots,\Gamma_{r}$, with $j\in\left\{1,\dots,d\right\}$.
Then the linear subspace $L:=\overline{\pi_P^{-1}(R)}\subset \mathbb{P}^n$ is a $(n-k)$-plane, which intersects $\Lambda_1,\dots,\widehat{\Lambda_j},\ldots,\Lambda_r$ and meets $\Lambda_{r+1},\dots,\Lambda_d$ at $P$. 
Hence $L$ must intersect $\Lambda_j$ too, since $\Lambda_1,\dots,\Lambda_d$ are $\SP(n-k)$. Therefore $R=\pi_P(L)$ intersects $\Gamma_j=\pi_P(\Lambda_j)$ and we conclude that $\Gamma_1,\dots,\Gamma_r$ are $\SP(n-\alpha-1-k)$. 
\end{proof}
\end{lemma}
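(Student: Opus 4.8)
The plan is to follow the same template as the ``vice versa'' implication in the proof of Lemma~\ref{lem:sottosopraSP}: pull a general $(n-\alpha-1-k)$-plane of $\mathbb{P}^{n-\alpha-1}$ back to an $(n-k)$-plane of $\mathbb{P}^n$ passing through $P$, invoke the hypothesis $\SP(n-k)$ for $\Lambda_1,\dots,\Lambda_d$ there, and push the resulting intersection point back down by $\pi_P$. The one genuinely new ingredient compared with Lemma~\ref{lem:sottosopraSP} is that here $P$ meets $\Lambda_{r+1},\dots,\Lambda_d$; but this works in our favour, because the pulled-back plane will contain $P$ and hence automatically meet each of $\Lambda_{r+1},\dots,\Lambda_d$, so those planes impose no extra condition.

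Concretely, I would fix $j\in\{1,\dots,r\}$ and a $(n-\alpha-1-k)$-plane $R\subset\mathbb{P}^{n-\alpha-1}$ meeting $\Gamma_i$ for every $i\in\{1,\dots,r\}\smallsetminus\{j\}$ (if $n-\alpha-1-k<0$ there is no such $R$, since $r\geq 2$, so the assertion is vacuous). Set $L:=\overline{\pi_P^{-1}(R)}$, which by the dimension formula recalled before Lemma~\ref{lem:sottosopraSP} is an $(n-k)$-plane containing $P$. For $i\in\{1,\dots,r\}\smallsetminus\{j\}$, a point of $R\cap\Gamma_i$ lifts through $\pi_P$ to a point of $\Lambda_i\cap L$ (using $\Lambda_i\cap P=\emptyset$); and for $i\in\{r+1,\dots,d\}$ one has $\emptyset\neq P\cap\Lambda_i\subseteq L\cap\Lambda_i$. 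Thus $L$ meets all the $\Lambda_i$ with $i\neq j$, so condition $\SP(n-k)$ forces $L\cap\Lambda_j\neq\emptyset$. Choosing $y\in L\cap\Lambda_j$, we have $y\notin P$ because $\Lambda_j\cap P=\emptyset$, hence $\pi_P(y)$ is defined and lies in $\pi_P(L)\cap\Gamma_j=R\cap\Gamma_j$ (using $\pi_P(L\smallsetminus P)=R$). Since $j$ and $R$ were arbitrary, this shows $\Gamma_1,\dots,\Gamma_r$ are $\SP(n-\alpha-1-k)$.

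I do not expect a real obstacle here: the argument is a routine variant of the second half of Lemma~\ref{lem:sottosopraSP}, the point being precisely that the planes $\Lambda_{r+1},\dots,\Lambda_d$ get absorbed by $P$. The only steps needing a moment's care are checking that $L$ has dimension exactly $n-k$ (so that $\SP(n-k)$ is applicable) and disposing of the degenerate case $\dim R<0$, and both follow immediately from $2\leq r\leq d-1$ together with $\dim R=n-\alpha-1-k$.
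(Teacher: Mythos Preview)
Your proof is correct and follows essentially the same argument as the paper: pull back $R$ to the $(n-k)$-plane $L=\overline{\pi_P^{-1}(R)}$, observe that $L$ meets every $\Lambda_i$ with $i\neq j$ (the ones with $i>r$ being met at $P$), apply $\SP(n-k)$, and project the resulting intersection point back down. Your write-up is in fact slightly more careful than the paper's, since you explicitly note that the intersection point of $L$ with $\Lambda_j$ avoids $P$ and you dispose of the degenerate case $n-\alpha-1-k<0$.
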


\begin{lemma}\label{lem:sopraSP}
Let $\Lambda_1,\dots,\Lambda_d\subset\mathbb{P}^n$ be $(k-1)$-planes that satisfy $\SP(n-k)$.
Let $2\leq r\leq \min\{d-2,n-k+1\}$ be an integer such that if $p\in \Lambda_{d}$ is a general point, then $p\not\in \Lambda_i$ for $i=1,\dots,r$ and the $(k-1)$-planes $\Gamma_1,\dots,\Gamma_{r}\subset \mathbb{P}^n$ are $\SP(n-k-1)$, where $\pi_p\colon \mathbb{P}^n\dashrightarrow \mathbb{P}^{n-1}$ is the projection from $p$ and $\Gamma_i:=\pi_p(\Lambda_i)$ for $i=1,\dots,r$.
Then one of the following holds:
\begin{itemize}
\item[(A)] $\Lambda_{d}\subset \Span(\Lambda_1,\dots,\Lambda_{r})$;
\item[(B)] $\Lambda_1,\dots,\Lambda_{r}$ are $\SP(n-k)$.
\end{itemize}

\begin{proof}
Let $S:=\Span(\Lambda_1,\dots,\Lambda_{r})$.
If condition (A) does not hold, then $S\cap \Lambda_{d}$ is a (possibly empty) proper subset of $\Lambda_{d}$. 
Suppose that $L$ is a $(n-k)$-plane intersecting $\Lambda_1,\dots,\widehat{\Lambda_j},\ldots,\Lambda_{r}$. 
For any $i=1,\dots,\widehat{j},\dots,r$, let us fix a point $p_i\in L\cap\Lambda_i$ and set $N:=\Span\left(p_1,\dots,\widehat{p_j},\dots,p_r\right)\subset L\cap S$.

Notice that a general point $p\in\Lambda_{d}$ is such that $p\not\in S$, and consider $\pi_p(N)\subset \mathbb{P}^{n-1}$. 
Then $\dim \pi_p(N)=\dim N\leq r-2 \leq n-k-1$. 
By definition of $N$, the subspace $\pi_p(N)$ intersects $\Gamma_1,\dots,\widehat{\Gamma_j},\ldots,\Gamma_{r}$ and by condition $\SP(n-k-1)$, $\pi_p(N)$ must intersect $\Gamma_j$ too. 
Therefore the linear subspace $\widetilde{N}:=\overline{\pi_p^{-1}\left(\pi_p(N)\right)}=\Span(N,p)\subset \mathbb{P}^n$ intersects $\Lambda_j$, and $\dim \widetilde{N}=\dim N+1\leq n-k$. 
Moreover, since $\Lambda_j\subset S$ and $N=\widetilde{N}\cap S$, we deduce that $N\cap \Lambda_j\neq \emptyset$.
Thus $L$ meets $\Lambda_j$ and we conclude that $\Lambda_1,\dots,\Lambda_{r}$ are $\SP(n-k)$, as in condition $(B)$.
\end{proof}
\end{lemma}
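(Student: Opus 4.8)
The plan is to prove the dichotomy directly, by assuming that condition (A) fails and deriving condition (B). First I would set $S:=\Span(\Lambda_1,\dots,\Lambda_r)$. If (A) fails, then $\Lambda_d\not\subseteq S$, so $S\cap\Lambda_d$ is a proper closed subset of $\Lambda_d$, and hence a general point $p\in\Lambda_d$ lies outside $S$. By hypothesis such a general $p$ also satisfies $p\notin\Lambda_i$ for $i\le r$ and makes $\Gamma_1,\dots,\Gamma_r$ an $\SP(n-k-1)$ family in $\mathbb{P}^{n-1}$. To establish (B), I would fix an index $j\in\{1,\dots,r\}$ and an arbitrary $(n-k)$-plane $L$ meeting every $\Lambda_i$ with $i\in\{1,\dots,r\}\setminus\{j\}$, and aim to show $L\cap\Lambda_j\ne\emptyset$. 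Choosing points $p_i\in L\cap\Lambda_i$ for $i\ne j$ and setting $N:=\Span(p_i:i\ne j)\subseteq L\cap S$, the problem reduces to proving $N\cap\Lambda_j\ne\emptyset$.

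The heart of the argument is to transport this intersection problem to $\mathbb{P}^{n-1}$ via $\pi_p$. Since $p\notin S\supseteq N$, the projection preserves the dimension of $N$, and the bound $r\le n-k+1$ gives $\dim\pi_p(N)=\dim N\le r-2\le n-k-1$; moreover $\pi_p(N)$ meets each $\Gamma_i=\pi_p(\Lambda_i)$ with $i\ne j$ at the point $\pi_p(p_i)$. The key step is then to deduce from the $\SP(n-k-1)$ property of $\Gamma_1,\dots,\Gamma_r$ that $\pi_p(N)$ meets $\Gamma_j$ as well. When $\dim\pi_p(N)=n-k-1$ this is immediate from the definition of $\SP(n-k-1)$. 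When the dimension is strictly smaller, I would argue as in Lemma~\ref{lem:BVT}: every $(n-k-1)$-plane $R\supseteq\pi_p(N)$ still meets all $\Gamma_i$ with $i\ne j$, hence meets $\Gamma_j$ by $\SP(n-k-1)$; were $\pi_p(N)\cap\Gamma_j$ empty, projecting $\mathbb{P}^{n-1}$ from $\pi_p(N)$ and running a dimension count would show that a general such $R$ misses $\Gamma_j$, a contradiction. Thus $\pi_p(N)\cap\Gamma_j\ne\emptyset$ in all cases.

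Finally I would pull the intersection back upstairs. Writing $\widetilde N:=\overline{\pi_p^{-1}(\pi_p(N))}=\Span(N,p)$, which has dimension $\dim N+1\le n-k$, a point of $\pi_p(N)\cap\Gamma_j$ yields a point of $\widetilde N\cap\Lambda_j$. Because $p\notin S$, the restriction $\pi_p|_S$ is injective and one checks $\Span(N,p)\cap S=N$, so $\widetilde N\cap S=N$; since $\Lambda_j\subseteq S$, any point of $\widetilde N\cap\Lambda_j$ in fact lies in $N\cap\Lambda_j$. Hence $N\cap\Lambda_j\ne\emptyset$, and as $N\subseteq L$ this gives $L\cap\Lambda_j\ne\emptyset$. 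Since $j$ and $L$ were arbitrary, $\Lambda_1,\dots,\Lambda_r$ are $\SP(n-k)$, which is exactly (B).

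I expect the main obstacle to be the key step of the second paragraph, namely upgrading the $\SP(n-k-1)$ condition (a statement about planes of dimension exactly $n-k-1$) to a statement about the possibly lower-dimensional linear space $\pi_p(N)$. The constraint $r\le\min\{d-2,n-k+1\}$ is precisely what keeps $\dim\pi_p(N)$ within the range $\le n-k-1$ where the projection-and-dimension-count argument applies; without it one could not force the intersection with $\Gamma_j$, and the whole transport of the problem to $\mathbb{P}^{n-1}$ would break down.
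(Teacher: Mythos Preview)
Your proposal is correct and follows essentially the same route as the paper's proof: assume (A) fails so that a general $p\in\Lambda_d$ lies outside $S$, push the intersection problem down to $\mathbb{P}^{n-1}$ via $\pi_p$, use $\SP(n-k-1)$ to force $\pi_p(N)\cap\Gamma_j\neq\emptyset$, and then use $\widetilde N\cap S=N$ to pull the intersection back into $N\subseteq L$. The only difference is that you spell out more carefully why $\SP(n-k-1)$ still forces $\pi_p(N)\cap\Gamma_j\neq\emptyset$ when $\dim\pi_p(N)<n-k-1$, a step the paper handles in one line; your extra justification is sound and in the spirit of the argument used in Lemma~\ref{lem:BVT}.
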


\begin{lemma}\label{lem:claimSP}
Let $\Lambda_1,\dots,\Lambda_d\subset\mathbb{P}^n$ be $(k-1)$-planes $\SP(n-k)$.
Consider a partition of the set $\{1,\dots,d\}$ consisting of $m\geq 2$ parts of the form
\begin{equation}\label{eq: partition1}
D_1:=\{1,\dots,d_1\}, D_2:=\{d_1+1,\dots,d_1+d_2\},\dots,D_m:=\{\sum_{i=1}^{m-1} d_i+1,\dots,d\},
\end{equation}
and for any $j=1,\dots, m$, let $d_j$ be the cardinality of $D_j$, and let $S_j$ denote the linear span of the $(k-1)$-planes indexed by the part $D_j$.
Suppose that for any $j=1,\dots, m-1$,
\begin{itemize}
\item[(i)] the $(k-1)$-planes indexed by $D_j$ are $\SP(n-k)$;
\item[(ii)] the $(k-1)$-planes indexed by $\{1,\dots,d\}\smallsetminus D_j$ are not $\SP(n-k)$;
\item[(iii)] $\dim S_j\leq d_j+k-3-\varepsilon_j$ for some $\varepsilon_j\geq 0$.
\end{itemize}
Then
\begin{equation}\label{eq: claimSP}
\dim\Span\left(\Lambda_1,\dots,\Lambda_{d}\right)\leq\dim S_m+\sum_{j=1}^{m-1} \left(d_j-\varepsilon_j\right) -m.
\end{equation}
 
\begin{proof}
Consider the union $J$ of $D_m$ and $r$ other parts of \eqref{eq: partition1}, with $0\leq r\leq m-2$.
Then the $(k-1)$-planes indexed by $J$ are not $\SP(n-k)$; otherwise, by assumption (i) and Remark \ref{rem:unionSP}, the union of $J$ and $m-2-r$ parts not contained in $J$ would index a collection of $(k-1)$-planes $\SP(n-k)$, contradicting assumption (ii).
In particular, the $(k-1)$-planes indexed by $D_m$ are not $\SP(n-k)$.

\smallskip
In order to prove \eqref{eq: claimSP}, we argue by induction on $m\geq 2$. 
If $m=2$, partition \eqref{eq: partition1} is given by $D_1=\{1,\dots,d_1\}$ and $D_2=\{d_1+1,\dots,d\}$.
We note that $\Lambda_{1},\dots, \Lambda_{d_1}$ are $\SP(n-k)$ and, in particular, $d_1\geq 2$.
Moreover, $\Lambda_{d_1+1},\dots, \Lambda_{d}$ are not $\SP(n-k)$, so Lemma \ref{lem:intersezioneNonSP} ensures that $\dim(S_1\cap S_2)\geq k-1$.
Since $\dim S_1\leq d_1+k-3-\varepsilon_1$, we conclude that 
$$
\dim\Span\left(\Lambda_1,\dots,\Lambda_{d}\right)=\dim \Span(S_1, S_2) \leq \dim S_1 + \dim S_2 - (k-1) \leq \dim S_2 + d_1-\varepsilon_1-2,
$$
as in \eqref{eq: claimSP}.

\smallskip
Then we assume $m\geq 3$. 
Let $P\subset S_{1}$ be a general subspace of dimension $\alpha:=\dim S_{1}-k+1$. 
We distinguish three cases: 1) $P\cap S_m\neq\emptyset$; 2) $P\cap S_j\neq\emptyset$ for some $j=2,\dots,m-1$ and $P\cap S_m=\emptyset$; 3) $P\cap S_j=\emptyset$ for any $j=2,\dots,m$.
The key point in each case is showing that there exists a $S_j$ (or a linear space $\Sigma$ containing some spaces among $S_2,\dots,S_m$) which intersects $S_1$ along a linear space of large dimension, so that $\dim\Span\left(S_1,S_{j}\right)$ (resp. $\dim\Span\left(S_1,\Sigma\right)$) is not too large.

\smallskip
\underline{Case 1}. Suppose that $P\cap S_m\neq\emptyset$.
It follows by the generality of $P$ that $\dim\left(S_{1}\cap S_m\right)\geq k-1$, and hence
\begin{equation}\label{eq: dim2}
\dim \Span(S_{1},S_m)=\dim S_{1} + \dim S_{m}-  \dim (S_{1}\cap S_m)\leq \dim S_m +d_{1}-\varepsilon_{1}-2.
\end{equation}
Then we consider the partition of $\{1,\dots,d\}$ given by
\begin{equation}\label{eq: partition2}
D_2, D_3, \dots, D_{m-1},D_{1}\cup D_{m},
\end{equation}
which is obtained by joining the first and the last part of \eqref{eq: partition1}.
We note that \eqref{eq: partition2} consists of $m-1$ parts satisfying assumptions (i), (ii) and (iii), and the span of the $(k-1)$-planes indexed by the last part is $\Span(S_{1},S_m)$.
Thus we deduce by induction and \eqref{eq: dim2} that
\begin{align*}
\dim\Span(\Lambda_1,\dots,\Lambda_d)&\leq \dim\Span(S_{1},S_m)+\sum_{j=2}^{m-1} (d_j - \varepsilon_j ) -(m-1)\\
&\leq \dim S_m+\sum_{j=1}^{m-1} (d_j-\varepsilon_j) -m-1,
\end{align*}
so that \eqref{eq: claimSP} holds.

\smallskip
\underline{Case 2}. Analogously, suppose that $P\cap S_j\neq\emptyset$ for some $j=2,\dots,m-1$ and, without loss of generality, set $j=2$.
By arguing as above, the generality of $P$ yields $\dim\left(S_{1}\cap S_{2}\right)\geq k-1$ and 
\begin{equation}\label{eq: dim3}
\begin{aligned}
\dim \Span(S_{1},S_{2})&\leq \left(d_{1}+k-3-\varepsilon_{1}\right)+\left(d_{2}+k-3-\varepsilon_{2}\right)-(k-1)\\
&=d_{1}+d_{2}+k-3-\widetilde{\varepsilon},
\end{aligned}
\end{equation}
where $\widetilde{\varepsilon}:=\varepsilon_{1}+\varepsilon_{2}+2\geq 2$.
Then we consider the partition of $\{1,\dots,d\}$ given by
\begin{equation}\label{eq: partition3}
D_1\cup D_2, D_3, \dots, D_{m},
\end{equation}
which is obtained by joining the first and the second part of \eqref{eq: partition1}.
Then \eqref{eq: partition3} consists of $m-1$ parts, where the first part indexes a collection of $d_{1}+d_{2}$ linear spaces of dimension $(k-1)$, which are $\SP(n-k)$ by Remark \ref{rem:unionSP}, and their linear span is $\Span(S_{1},S_{2})$.
In the light of \eqref{eq: dim3}, assumptions (i), (ii) and (iii) are satisfied, and we deduce by induction that
\begin{align*}
\dim\Span(\Lambda_1,\dots,\Lambda_d)&\leq \dim S_m+(d_{1}+d_{2}-\widetilde{\varepsilon})+\sum_{j=3}^{m-1} (d_j - \varepsilon_j ) -(m-1)\\
&\leq \dim S_m+\sum_{j=1}^{m-1} (d_j-\varepsilon_j) -m-1,
\end{align*}
so that \eqref{eq: claimSP} holds.

\smallskip
\underline{Case 3}. Finally, it remains to prove \eqref{eq: claimSP} when the $\alpha$-plane $P\subset S_1$ is such that $P\cap S_j=\emptyset$ for any $j=2,\ldots,m$, where $\alpha:=\dim S_1-k+1$.
Since $P$ is a subspace of codimension $k-1$ in $S_1$, we deduce that $P\cap \Lambda_i=\emptyset$ if and only if $i=d_1+1,\dots,d$.
Then we consider the projection $\pi_P\colon\mathbb{P}^n \dashrightarrow \mathbb{P}^{n-\alpha-1}$ from $P$, and we set $\Gamma_i:=\pi_P(\Lambda_i)$ for any $i=d_1+1,\dots,d$.  
Denoting $S'_j:=\pi_P(S_j)$ for any $j=2,\ldots,m$, it is easy to see that each $S'_j$ is spanned by the $(k-1)$-planes $\Gamma_i$ indexed by the part $D_j$.
We would like to use induction on $\Gamma_{d_1+1},\dots,\Gamma_d\subset \mathbb{P}^{n-\alpha-1}$ with respect to the partition of $\{d_1+1,\dots,d\}$ given by
\begin{equation}\label{eq: partition5}
D_2, D_3,\dots, D_m,
\end{equation}
which is obtained by removing the first part of \eqref{eq: partition1}.
Thanks to Lemma \ref{lem:sottoSP}, $\Gamma_{d_1+1},\dots,\Gamma_d\subset \mathbb{P}^{n-\alpha-1}$ are $\SP(n-\alpha-1-k)$, and the $(k-1)$-planes $\Gamma_{i}$ indexed by $D_j$ satisfy assumption (i) for any $j=2,\dots,m-1$.
Moreover, $\dim S'_j=\dim S_j$ for any $j=2,\ldots,m$, so that assumption (iii) holds.

\smallskip
\hspace{1cm}\underline{Case 3(a)}. Suppose in addition that $\Gamma_{d_1+1},\dots,\Gamma_d\subset \mathbb{P}^{n-\alpha-1}$ satisfy assumption (ii) with respect to partition \eqref{eq: partition5}. 
We deduce by induction that 
$$
\dim\Span(\Gamma_{d_1+1},\dots,\Gamma_d)\leq \dim S_m+\sum_{j=2}^{m-1} (d_j - \varepsilon_j ) -(m-1).
$$
Let $\Sigma:=\Span\left(\Lambda_{d_1+1},\dots,\Lambda_d\right)$. 
In the light of assumptions (i) and (ii), Lemma \ref{lem:intersezioneNonSP} ensures that $\dim(S_1\cap \Sigma)=k-1+\gamma$ for some $\gamma\geq 0$. 
As $P$ is a general linear space of codimension $k-1$ in $S_1$, it follows that $\dim(P\cap \Sigma)=\gamma$. 
Moreover, $\pi_P(\Sigma)=\Span(\Gamma_{d_1+1},\dots,\Gamma_d)$ and hence
$$
\dim \Sigma=\dim\Span(\Gamma_{d_1+1},\dots,\Gamma_d) +\gamma +1\leq  \dim S_m+\sum_{j=2}^{m-1} (d_j - \varepsilon_j ) -m+2+\gamma.
$$
Therefore $\dim \Span\left(\Lambda_{1},\dots,\Lambda_d\right)=\dim\Span(S_1,\Sigma)=\dim S_1+\dim\Sigma-\dim\left(S_1\cap \Sigma\right)$, so that
\begin{align*} 
\dim \Span\left(\Lambda_{1},\dots,\Lambda_d\right)&\leq (d_1+k-3-\varepsilon_1)+ \left(\dim S_m+\sum_{j=2}^{m-1} (d_j - \varepsilon_j ) -m+2+\gamma\right) -(k-1+\gamma)\\
&=\dim S_m+\sum_{j=1}^{m-1} (d_j - \varepsilon_j )-m,
\end{align*}
which is \eqref{eq: claimSP}.

\smallskip
\hspace{1cm}\underline{Case 3(b)}. Finally, suppose that $\Gamma_{d_1+1},\dots,\Gamma_d$ do not satisfy assumption (ii) with respect to \eqref{eq: partition5}.
Without loss of generality, we assume that $\Gamma_{d_1+d_2+1},\dots,\Gamma_d$ are $\SP(n-\alpha-1-k)$, i.e. assumption (ii) fails for the $(k-1)$-planes $\Gamma_{i}$ such that $i\in\{d_1+1,\dots,d\}\smallsetminus D_2$.

By Lemma \ref{lem:sottosopraSP}, the $(k-1)$-planes $\Gamma_{i}\subset \mathbb{P}^{n-\alpha-1}$ indexed by $D_m$ are not $\SP(n-\alpha-1-k)$, because at the beginning of the proof we pointed out that the corresponding $\Lambda_{i}\subset \mathbb{P}^{n}$ are not $\SP(n-k)$. 
Let $1\leq t\leq m-3$ be an integer such that  the $(k-1)$-planes $\Gamma_{i}$ indexed by the union of $D_m$ and $t$ other parts of \eqref{eq: partition5} satisfy the hypothesis of the lemma. 
The existence of $t$ is granted by the fact that the $(k-1)$-planes $\Gamma_{i}$ such that $i\in D_m$ are not $\SP(n-\alpha-1-k)$, whereas $\Gamma_{d_1+d_2+1},\dots,\Gamma_d$ are $\SP(n-\alpha-1-k)$. 
Without loss of generality, we assume that the last $t+1$ parts of \eqref{eq: partition5}---i.e. $D_{m-t},\dots,D_m$---satisfy the hypotheses of the lemma.

Let $T:=\Span\left(S_{m-t},\dots,S_{m}\right)$ be the span of the $(k-1)$-planes $\Lambda_{i}\subset \mathbb{P}^{n}$ with $i\in  D_{m-t}\cup\dots\cup D_m$.
We note that $P\cap T\neq\emptyset$; otherwise Lemma \ref{lem:sottosopraSP} would assure that these $(k-1)$-planes are $\SP(n-k)$, contradicting the remark at the beginning of the proof.
Let $\delta :=\dim (P\cap T)\geq 0$, so that $\dim(S_1\cap T)=k-1+\delta$.
Since $\dim S_j'=\dim S_j$ for any $j=2,\dots, m$, using induction on the $(k-1)$-planes $\Gamma_{i}\subset \mathbb{P}^{n-1}$ indexed by $D_{m-t}\cup\dots\cup D_m$, we obtain $\dim \pi_P(T)\leq\dim S_m+\sum_{j=m-t}^{m-1} (d_j- \varepsilon_j) -t-1$.
Then we set $\Sigma:=\overline{\pi^{-1}_P\left(\pi_P(T)\right)}$, and we deduce that
\begin{equation}\label{eq: dim6}
\dim \Sigma= \dim \pi_P(T)+\dim P+1 \leq \dim S_m+\sum_{j=m-t}^{m-1} (d_j- \varepsilon_j) +(d_1-\varepsilon_1)-(t+2).
\end{equation}
We point out that $S_1\cap \Sigma$ contains $\Span(P,S_1\cap T)$, whose dimension is 
$$
\dim \Span(P,S_1\cap T)=\dim P+\dim(S_1\cap T)-\dim(P\cap T)
=\dim P+k-1+\delta-\delta=\dim S_1.
$$
Therefore $S_1\subset \Sigma$, so that $\Sigma=\Span\left(S_1,S_{m-t},\dots,S_m\right)$.

To conclude, we consider the partition of $\{1,\dots,d\}$ given by
\begin{equation}\label{eq: partition6}
D_2, D_3, \dots,D_{m-t-1}, D_1\cup D_{m-t}\cup\dots\cup D_m,
\end{equation}
which is obtained from \eqref{eq: partition1} by joining the first part and the last $t+1$ parts.
By assumptions, the $(k-1)$-planes $\Lambda_1,\dots,\Lambda_d$ satisfy the hypotheses of the lemma with respect to partition \eqref{eq: partition6}.
So we conclude by induction and inequality \eqref{eq: dim6} that
\begin{align*}
\dim\Span(\Lambda_1,\dots,\Lambda_d)&\leq \dim \Span\left(S_1,S_{m-t},\dots,S_m\right)+\sum_{j=2}^{m-t-1} (d_j - \varepsilon_j ) -(m-t-1)\\
&\leq \dim S_m+\sum_{j=1}^{m-1} (d_j-\varepsilon_j) -m-1,
\end{align*}
which implies \eqref{eq: claimSP}.
\end{proof}
\end{lemma}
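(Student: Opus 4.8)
The plan is to induct on the number of parts $m\geq 2$, after first recording the combinatorial backbone of the hypotheses. I would show that if $J$ denotes the union of $D_m$ with any $r$ of the other parts, $0\leq r\leq m-2$, then the $(k-1)$-planes indexed by $J$ are \emph{not} $\SP(n-k)$: otherwise one could adjoin $m-2-r$ further parts, each $\SP(n-k)$ by assumption (i), and by Remark \ref{rem:unionSP} the resulting union---which is exactly the complement of a single part $D_{j_0}$ with $j_0\leq m-1$---would be $\SP(n-k)$, contradicting assumption (ii). In particular $D_m$ itself indexes a non-$\SP(n-k)$ collection, the fact that drives the whole proof. The base case $m=2$ is then immediate: the planes of $D_1$ are $\SP(n-k)$ while those of $D_2$ are not, so Lemma \ref{lem:intersezioneNonSP} gives $\dim(S_1\cap S_2)\geq k-1$, and combining this with $\dim S_1\leq d_1+k-3-\varepsilon_1$ through the Grassmann formula for $\Span(S_1,S_2)$ yields \eqref{eq: claimSP}.

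For the inductive step $m\geq 3$, I would fix a \emph{general} subspace $P\subset S_1$ of dimension $\alpha=\dim S_1-k+1$, i.e. of codimension $k-1$ in $S_1$, and distinguish the cases according to how $P$ meets the remaining spans. If $P$ meets $S_m$, or some $S_j$ with $2\leq j\leq m-1$, then by the generality of $P$ one forces $\dim(S_1\cap S_j)\geq k-1$; merging $D_1$ with $D_j$ then produces a partition into $m-1$ parts, which still satisfies (i)--(iii) (the needed non-$\SP$ conditions being inherited directly when the merged block is last, and following from the preliminary observation when it is first), and the induction hypothesis applied to this coarser partition closes the case after one use of the Grassmann formula, in fact with one dimension to spare.

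The substantial case is when $P\cap S_j=\emptyset$ for all $j=2,\dots,m$. A codimension count shows $P\cap\Lambda_i=\emptyset$ exactly for $i\in\{d_1+1,\dots,d\}$, so projecting $\pi_P\colon\mathbb{P}^n\dashrightarrow\mathbb{P}^{n-\alpha-1}$ sends these $\Lambda_i$ to $(k-1)$-planes $\Gamma_i$ with $\dim\pi_P(S_j)=\dim S_j$. By Lemma \ref{lem:sottoSP} the $\Gamma_i$ ($i>d_1$) are $\SP(n-\alpha-1-k)$, and by Lemma \ref{lem:sottosopraSP} the blocks $D_2,\dots,D_{m-1}$ still satisfy (i) and (iii) downstairs, now relative to the partition obtained by deleting $D_1$. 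If these also satisfy (ii) downstairs, the induction hypothesis bounds $\dim\Span(\Gamma_{d_1+1},\dots,\Gamma_d)$; writing $\dim(S_1\cap\Sigma)=k-1+\gamma$ for $\Sigma=\Span(\Lambda_{d_1+1},\dots,\Lambda_d)$ via Lemma \ref{lem:intersezioneNonSP}, noting $\dim(P\cap\Sigma)=\gamma$ by generality, and substituting the projection formula $\dim\Sigma=\dim\pi_P(\Sigma)+\gamma+1$ into the Grassmann formula for $\Span(S_1,\Sigma)$ recovers \eqref{eq: claimSP} exactly.

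The main obstacle is the remaining possibility, where (ii) fails downstairs, i.e. some union of blocks omitting one $D_j$ ($j\neq m$) is $\SP(n-\alpha-1-k)$. Since $D_m$ is \emph{not} $\SP$ downstairs---by Lemma \ref{lem:sottosopraSP}, using that it is not $\SP$ upstairs---I would enlarge $D_m$ by a minimal number $t$ of blocks, reindexed to $D_{m-t},\dots,D_{m-1}$, until $D_{m-t}\cup\dots\cup D_m$ becomes $\SP$ downstairs and presents a valid sub-instance of the lemma. The crux is that, for $T=\Span(S_{m-t},\dots,S_m)$, one \emph{must} have $P\cap T\neq\emptyset$: otherwise Lemma \ref{lem:sottosopraSP} would lift the downstairs $\SP$ property to the corresponding $\Lambda_i$ upstairs, contradicting the preliminary observation that any union containing $D_m$ fails $\SP(n-k)$. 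Setting $\delta=\dim(P\cap T)$ then gives $\dim(S_1\cap T)=k-1+\delta$ and hence $\dim\Span(P,S_1\cap T)=\dim S_1$, so that $S_1\subset\Sigma:=\overline{\pi_P^{-1}(\pi_P(T))}=\Span(S_1,S_{m-t},\dots,S_m)$. Bounding $\dim\pi_P(T)$ by the induction hypothesis on $D_{m-t},\dots,D_m$, and then running the induction one final time on the partition obtained by amalgamating $D_1$ with the last $t+1$ blocks, delivers \eqref{eq: claimSP}. The delicate points throughout are checking that each merged or projected partition inherits (i)--(iii), which repeatedly reduces to the preliminary observation, and keeping the bookkeeping of the $\gamma$, $\delta$ and $\varepsilon_j$ contributions exact, since the target inequality leaves no slack.
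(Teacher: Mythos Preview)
Your proposal is correct and follows essentially the same approach as the paper's proof: the preliminary observation on unions containing $D_m$, the base case via Lemma~\ref{lem:intersezioneNonSP}, the trichotomy on how the general codimension-$(k-1)$ subspace $P\subset S_1$ meets the other spans, and in the disjoint case the projection from $P$ followed by the sub-case analysis (including the key step $P\cap T\neq\emptyset$ via Lemma~\ref{lem:sottosopraSP}) all match. Your citation of Lemma~\ref{lem:sottosopraSP} (rather than Lemma~\ref{lem:sottoSP}) to transfer condition~(i) for the individual blocks downstairs is in fact the more precise reference, since $P$ is disjoint from each $S_j$ for $j\geq 2$.
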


\smallskip
We can finally prove Theorem \ref{thm:dimSP}.

\begin{proof}[Proof of Theorem \ref{thm:dimSP}] 
If $n\leq d+k-3$, the assertion is trivial.
So we assume hereafter that $n\geq d+k-2$.
We argue by induction on $d\geq 2$.

If $d=2$, the $(k-1)$-planes $\Lambda_1,\Lambda_2\subset \mathbb{P}^n$ are $\SP(n-k)$ if and only if they coincide, so  the statement is true.

\smallskip
Then we assume $d\geq 3$.
We point out that there exists $j\in\{1,\dots,d\}$ such that $\Lambda_j\neq\Lambda_i$ for all $i\neq j$; otherwise, by grouping the indexes of coincident $(k-1)$-planes, we would obtain a partition of $\{1,\dots,d\}$, where each part consists of at least 2 coincident $(k-1)$-planes, which are trivially $\SP(n-k)$, but this is impossible as the sequence $\Lambda_1,\dots,\Lambda_d$ is indecomposable. 
Without loss of generality, we assume $j=d$. 

Therefore, given a general point $p\in\Lambda_d$, we have that $p\not\in\Lambda_i$ for any $i=1,\dots,d-1$. 
Let us consider the projection $\pi_p:\mathbb{P}^n\dashrightarrow\mathbb{P}^{n-1}$ from $p$, and for any $i=1,\dots,d-1$, let us set $\Gamma_i:=\pi_p(\Lambda_i)$. 
By Lemma \ref{lem:sottoSP}, the $(k-1)$-planes $\Gamma_1,\dots,\Gamma_{d-1}\subset\mathbb{P}^{n-1}$ are $\SP(n-k-1)$. 

\smallskip
If the sequence $\Gamma_1,\dots,\Gamma_{d-1}$ is indecomposable, we deduce by induction that $\dim\Span\left(\Gamma_1,\dots,\Gamma_{d-1}\right)\leq d+k-4$.
Therefore, the closure of $\pi_p^{-1}\left(\Span\left(\Gamma_1,\dots,\Gamma_{d-1}\right)\right)$ is a linear space $\Sigma\subset \mathbb{P}^n$ of dimension at most $d+k-3$.
Since $\Lambda_1,\dots,\Lambda_{d-1}\subset \Sigma$, Lemma \ref{lem:B1} implies that $\Lambda_{d}\subset \Sigma$ too. 
Thus we conclude that $\dim \Span\left(\Lambda_1,\dots,\Lambda_{d}\right)\leq d+k-3$, as wanted.

\smallskip
So we assume that $\Gamma_1,\dots,\Gamma_{d-1}$ is a decomposable sequence, i.e. there exists a partition of $\{1,\dots,d-1\}$ in $m\geq 2$ parts, where each part $\{i_1,\dots,i_t\}$ is such that the corresponding $(k-1)$-planes $\Gamma_{i_1},\dots,\Gamma_{i_t}$ are $\SP(n-k-1)$.
Let 
\begin{equation}\label{eq: partition7}
D_1:=\{1,\dots,d_1\}, D_2:=\{d_1+1,\dots,d_1+d_2\},\dots, D_m:=\{\sum_{i=1}^{m-1} d_i+1,\dots,d-1\}
\end{equation}
denote such a partition, where the part $D_j$ has cardinality $d_j$ and $\sum_{j=1}^md_j=d-1$.
Furthermore, up to taking a finer partition, we may assume that each part indexes an indecomposable sequence of $(k-1)$-planes $\Gamma_i$.
We point out that the partition does not depend on the choice of the general point $p\in \Lambda_d$, because $\Lambda_d$ is irreducible and the number of partitions of $\{1,\dots,d-1\}$ is finite. 
For any $j=1,\dots, m$, let 
$$
S'_j:=\Span\left(\Gamma_i\left|i\in D_j\right.\right)\subset \mathbb{P}^{n-1} \quad\text{and}\quad S_j:=\Span\left(\Lambda_i\left|i\in D_j\right.\right)\subset \mathbb{P}^{n}.
$$
By induction, $\dim S'_j\leq d_j+k-3$ for any $j=1,\dots,m$.

Since we assumed $n\geq d+k-2$, we have that $2\leq d_j<d-1\leq n-k+1$ for any $j=1,\dots,m$.
Hence Lemma \ref{lem:sopraSP} ensures that for any $j=1,\dots,m$, either $\Lambda_d\subset S_j$---as in case (A)---or the $(k-1)$-planes $\Lambda_i$ indexed by $D_j$ are $\SP(n-k)$, as in case (B).
Let $0\leq t\leq m$ be the largest number of parts such that $\Lambda_d\subset S_j$ and, without affecting generality, assume that this happens for $j=m-t+1,\dots,m$. 
Therefore
\begin{equation}\label{eq:dim7}
\dim\Span(S_{m-t+1},\dots,S_m)\leq \sum_{j=m-t+1}^m(d_j+k-3)-(t-1)(k-1)=\sum_{j=m-t+1}^m d_j -2t+k-1.
\end{equation}
 If $t=m$, that is $\Lambda_d\subset S_j$ for any $j=1,\dots,m$, the previous formula gives
$$
\dim\Span(\Lambda_{1},\dots,\Lambda_d)=\dim\Span(S_{1},\dots,S_m)\leq \sum_{j=1}^m d_j -2m+k-1,
$$
so the assertion follows as $ \sum_{j=1}^m d_j=d-1$ and $m\geq 2$.

If instead $0\leq t<m$, the first $m-t$ parts of \eqref{eq: partition7} satisfy condition (B) of Lemma \ref{lem:sopraSP}, and they do not satisfy condition (A), i.e. for any $j=1,\dots,m-t$, the $(k-1)$-planes $\Lambda_i$ indexed by $D_j$ are $\SP(n-k)$ and $\Lambda_d\not\subset S_j$.
In particular, $p\not\in S_j$ because $p\in \Lambda_d$ is a general point.
Therefore the $(k-1)$-planes indexed by $D_j$ give an indecomposable sequence, otherwise by Lemma \ref{lem:sottosopraSP} the corresponding sequence of $\Gamma_i=\pi_p(\Lambda_i)$ would be decomposable, but this is not the case.
Then we deduce by induction that $\dim S_j\leq d_j+k-3$ for any $j=1,\ldots, m-t$.

Finally, let us consider the partition of $\{1,\dots,d\}$ in $m-t+1$ parts given by
\begin{equation}\label{eq: partition8}
D_1, D_2,\dots,D_{m-t}, D_{m-t+1}\cup \dots\cup D_m\cup\{d\},
\end{equation}
where the first $m-t$ parts are those of \eqref{eq: partition7}, whereas the last part is the union of $\{d\}$ with the last $t$ parts of \eqref{eq: partition7}.
By summing up, the sequence $\Lambda_1,\dots,\Lambda_d$ is indecomposable, and for any $j=1,\dots,m-t$, the part $D_j$ indexes a sequence of $(k-1)$-planes which are $\SP(n-k)$, whose span is $S_j$ and satisfies $\dim S_j\leq d_j+k-3$.
Therefore the assumptions of Lemma \ref{lem:claimSP} holds with respect to partition \eqref{eq: partition8}.

Concerning the last part of \eqref{eq: partition8}, let $B$ be the span of the corresponding $(k-1)$-planes.
If $t=0$, the last part is just $\{d\}$ and $\dim B=\dim \Lambda_d =k-1$.
If instead $t>0$, then $B=\Span(S_{m-t+1},\dots,S_m)$, because $\Lambda_d\subset S_j$ for any $j=m-t+1,\dots,d$, and $\dim B$ is bounded by \eqref{eq:dim7}.
Thus for any $t\geq 0$, we obtain from Lemma \ref{lem:claimSP} that
$$
\dim\Span(\Lambda_1,\dots,\Lambda_d)\leq \dim B+ \sum_{j=1}^{m-t}d_j-(m-t+1)
\leq \sum_{j=1}^m d_j +k-m-t-2.
$$
In particular, the assertion holds since $ \sum_{j=1}^m d_j=d-1$, $m\geq 2$ and $t\geq 0$. 
\end{proof}

\smallskip
\subsection{Further remarks}\label{sub:remarks}
To conclude this section, we collect some remarks and possible developments concerning linear subspaces of $\mathbb{P}^n$ in special position and, more generally, points of $\mathbb{G}(k-1,n)$ satisfying the Cayley--Bacharach condition with respect to $\left|\mathcal{O}_{\mathbb{G}(k-1,n)}(r)\right|$, for some $r\geq 1$.

\begin{remark}\label{rem:GK}
In this section, we discussed the dimension of the span of $(k-1)$-dimensional linear spaces in $\mathbb{P}^n$, as being $\SP(n-k)$ is a property descending from the Cayley--Bacharach condition with respect to $\left|\mathcal{O}_{\mathbb{G}(k-1,n)}(1)\right|$.

More generally, it would be interesting to investigate the same problem for $(k-1)$-dimensional linear spaces in $\mathbb{P}^n$ parameterized by a set $\Gamma\subset \mathbb{G}(k-1,n)$ of points, which satisfies the Cayley--Bacharach condition with respect to $\left|\mathcal{O}_{\mathbb{G}(k-1,n)}(r)\right|$ for larger values of $r\geq 1$.
Moreover, this may lead to some progress in the study of measures of irrationality of Fano schemes of complete intersections.  

We recall that the \emph{Fano scheme} of a projective variety $X\subset \mathbb{P}^n$ is the scheme $F_k(X)\subset \mathbb{G}(k,n)$ parameterizing $k$-dimensional linear spaces contained in $X$.
In \cite{GK}, the authors study the geometry of lines in special position in $\mathbb{P}^4$, in order to determine the covering gonality, the connecting gonality and the degree of irrationality of the Fano surface $F_1(X)$ of a smooth cubic threefold $X\subset \mathbb{P}^4$.
In this setting, the connection between measures of irrationality and lines in special position depends on the following fact. 
By the same techniques we use in the next section, the points of $F_1(X)$ computing its covering gonality (resp. the connecting gonality or the degree of irrationality) satisfy the Cayley--Bacharach condition with respect to the canonical linear series of $F_1(X)$, which is the linear series $\left|\mathcal{O}_{F_1(X)}(1)\right|$ cut out on $F_1(X)$ by hyperplanes of $\mathbb{P}^{9}$ under the Pl\"ucker embedding of $\mathbb{G}(1,4)$.  

The same techniques apply in general to Fano schemes $F_k(X)\subset \mathbb{G}(k,n)$ of smooth complete intersections $X\subset \mathbb{P}^n$ of large degree, whose canonical bundle has the form $\mathcal{O}_{F_k(X)}(r)$, with $r\geq 1$ (see e.g. \cite[Remarques 3.2]{DM}), hence motivating the study of the geometry of $k$-planes satisfying the Cayley--Bacharach condition with respect to $\left|\mathcal{O}_{\mathbb{G}(k,n)}(r)\right|$.
\end{remark}

\begin{remark}\label{rem:LU}
According to \cite{LU}, we say that the union $\mathcal{P}=P_1\cup \dots \cup P_m\subset \mathbb{P}^n$ of positive-dimensional linear spaces is a \emph{plane configuration of length} $m$ \emph{and dimension} $\dim \mathcal{P}:=\sum_{i=1}^m\dim P_i$. 
Given two positive integers $d$ and $r$, the authors study the least dimension of a plane configuration containing a given set $\Gamma\subset \mathbb{P}^n$ of $d$ points satisfying the Cayley--Bacharach condition with respect to $|\mathcal{O}_{\mathbb{P}^n}(r)|$.

We note that Corollary \ref{cor:dimSP} can be rephrased in terms of plane configurations.
Namely, let $\Lambda_1,\dots,\Lambda_d\subset \mathbb{P}^{n}$ be linear spaces of dimension $(k-1)$ as in Corollary \ref{cor:dimSP}.
By arguing as in its proof, we easily see that $\Lambda_1,\dots,\Lambda_d$ are contained in a plane configuration $\mathcal{P}=P_1\cup \dots \cup P_m\subset \mathbb{P}^n$ of dimension $\dim \mathcal{P}=d+ m(k-3)$, where each $P_i$ is the linear span of the $(k-1)$-planes indexed by the $i$-th part.

So, in analogy with \cite{LU}, it would be interesting to determine the least dimension of a plane configuration containing the $(k-1)$-planes $\Lambda_1,\dots,\Lambda_d\subset \mathbb{P}^{n}$ parameterized by a set $\Gamma\subset \mathbb{G}(k-1,n)$ of $d$ points, which satisfies the Cayley--Bacharach condition with respect to $\left|\mathcal{O}_{\mathbb{G}(k-1,n)}(r)\right|$ for some $r\geq 1$.
\end{remark}

\begin{remark}\label{rem:LP}
It follows from \cite[Lemma 2.5]{LP} and \cite[Theorem A]{Pic} that, if $\Gamma\subset \mathbb{P}^{n}$ is a finite set of points satisfying the Cayley--Bacharach condition with respect to $|\mathcal{O}_{\mathbb{P}^n}(r)|$ and $|\Gamma|\leq h(r-3+h)-1$ for some $r\geq 1$ and $2\leq h\leq 5$, then $\Gamma$ lies on a curve of degree $h-1$ (actually, this is true for all $h\geq2$ when $n=2$).
Moreover, questions about analogous results are discussed in \cite[Section 7.3]{LU}.

Similarly, given a finite set $\Gamma\subset \mathbb{G}(k-1,n)$ of points satisfying the Cayley--Bacharach condition with respect to $\left|\mathcal{O}_{\mathbb{G}(k-1,n)}(r)\right|$ and having small cardinality, it would be interesting to understand whether the $(k-1)$-planes parameterized by $\Gamma$ lie on a $k$-dimensional variety of low degree.
For instance, looking at \cite[Examples 3.5 and 3.6]{B1} and \cite[Proposition 5.2]{GK}, one deduces that $3$ lines satisfying $\SP(n-2)$ lie on a plane and $4$ lines satisfying $\SP(n-2)$ lie on a---possibly reducible---quadric surface. 
\end{remark}


\section{Covering gonality of $C^{(3)}$ and $C^{(4)}$}\label{section:covgonCk} 

In this section, we study the covering gonality of symmetric products of curves and, in particular, we are aimed at proving Theorem \ref{thm:covgon}.
\smallskip
\subsection{Preliminaries}

To start, we recall some preliminary facts concerning covering gonality of projective varieties and linear series on smooth curves.

Let $X$ be an irreducible complex projective variety of dimension $n$.

\begin{definition}\label{def:covfam}
A \emph{covering family of} $d$\emph{-gonal curves} on $X$ consists of a smooth family $\mathcal{E}\stackrel{\pi}{\longrightarrow} T$ of irreducible curves endowed with a dominant morphism $f\colon \mathcal{E}\longrightarrow X$ such that for general $t\in T$, the fiber  $E_t:=\pi^{-1}(t)$ is a smooth curve with gonality $\gon(E_t)=d$ and the restriction $f_{t}\colon E_t\longrightarrow X$ of $f$ is birational onto its image. 
\end{definition}

It is worth noticing that the covering gonality of $X$ coincides with the least integer $d>0$ such that a covering family of $d$-gonal curves exists (see e.g. \cite[Lemma 2.1]{GK}).

Given a covering family $\mathcal{E}\stackrel{\pi}{\longrightarrow} T$ of $d$-gonal curves, both the varieties $T$ and $\mathcal{E}$ can be assumed to be smooth, with $\dim(T)=n-1$ (see \cite[Remark 1.5]{BDELU}).
Moreover, up to base changing $T$, there is a commutative diagram
\begin{equation}\label{diagram:covgon}
\xymatrix{X  & \mathcal{E} \ar[l]_f \ar[dr]_-{\pi} \ar[r]^-\varphi & T\times \mathbb{P}^1 \ar[d]^-{\mathrm{pr_1}} \\ & & T,\\}
\end{equation}
where the restriction $\varphi_t\colon E_t \longrightarrow \{t\}\times\mathbb{P}^1  \cong \mathbb{P}^1$ is a $d$-gonal map (cf. \cite[Example 4.7]{B1}). 

\smallskip
Turning to symmetric products of curves, let $C$ be a smooth projective curve of genus $g\geq 2$ and let $C^{(k)}$ be its $k$-fold symmetric product, with $2\leq k\leq g-1$.
Let ${\phi\colon C\longrightarrow \mathbb{P}^{g-1}}$ be the canonical map and let $\mathbb{G}(k-1,g-1)$ denote the Grassmann variety of ${(k-1)}$-planes in
$\mathbb{P}^{g-1}$.
Thanks to the General Position Theorem (see \cite[p.\,109]{ACGH}), we can define the \emph{Gauss map} 
$$\gamma\colon C^{(k)}\dashrightarrow \mathbb{G}(k-1,g-1),$$ 
which sends a general point ${p_1+\dots+p_k\in C^{(k)}}$ to the point of $\mathbb{G}(k-1,g-1)$ parameterizing the $(k-1)$-plane $\Span\left(\phi(p_1),\dots,\phi(p_k)\right)\subset \mathbb{P}^{g-1}$. 
We would also like to recall that $C^{(k)}$ is a variety of general type, where $H^0\left(C^{(k)},\omega_{C^{(k)}}\right)\cong \bigwedge^k H^0\left(C,\omega_{C}\right)$ and the canonical map factors through the Gauss map and the Pl\"ucker embedding $p\colon \mathbb{G}(k-1,g-1)\longrightarrow \mathbb{P}^{{g \choose k }-1}$, that is
\begin{equation}\label{diagram:canonical map}
\xymatrix{C^{(k)} \ar@{-->}[drr]_-{\gamma} \ar[rr]^-{|\omega_{C^{(k)}}|} & & \bP(\bigwedge^kH^{0}(C,\omega_C))\cong \bP^{{g \choose k }-1} \\  & &\mathbb{G}(k-1,g-1) \ar[u]^-{p} \\}
\end{equation}  
(cf. \cite{Mac}).
      
The following theorem is the key result connecting covering families of irreducible $d$-gonal curves on $C^{(k)}$ and $(k-1)$-planes of $\mathbb{P}^{g-1}$ in special position with respect to $(g-1-k)$-planes. 
It is somehow included in \cite[Section 4]{B1}, but we present a proof for the sake of completeness.

\begin{theorem}\label{thm:covfamCk}
Let $C$ be a smooth projective curve of genus $g\geq 2$ and let $C^{(k)}$ be its $k$-fold symmetric product, with $2\leq k\leq g-1$.
Let $\mathcal{E}\stackrel{\pi}{\longrightarrow} T$ be a covering family of $d$-gonal curves on $C^{(k)}$.
Using notation as above, consider a general point $(t,y)\in T\times \mathbb{P}^1$ and let $\varphi^{-1}(t,y)=\left\{x_1,\ldots,x_d\right\}\subset E_t$ be its fiber.\\
Then the $(k-1)$-planes parameterized by $(\gamma\circ f) (x_1),\dots,(\gamma\circ f) (x_d)\in \mathbb{G}(k-1,g-1)$ are in special position with respect to $(g-1-k)$-planes of $\mathbb{P}^{g-1}$.

\begin{proof}
According to \cite[Definition 4.1]{B1} and \cite[Example 4.7]{B1}, the covering family $\mathcal{E}\stackrel{\pi}{\longrightarrow} T$ induces a correspondence with null trace on $(T\times \mathbb{P}^1)\times C^{(k)}$ given by
$$
\Gamma:=\overline{\left\{\left.\big((t,y),P\big)\in (T\times \mathbb{P}^1)\times C^{(k)}\right|P\in f(E_t) \text{ and } f_t^{-1}(P)\in\varphi^{-1}(t,y) \right\}}
$$
(i.e. the general point $\big((t,y),P\big)\in \Gamma$ is such that $P\in f(E_t)$, and the preimage of $P$ on $E_t$ maps to $(t,y)$ under the $d$-gonal map $\varphi_t\colon E_t \longrightarrow \{t\}\times\mathbb{P}^1  \cong \mathbb{P}^1$).
In particular, the projection $\pi_1\colon \Gamma \longrightarrow T\times \mathbb{P}^1$ has degree $d$, since the general fiber $\pi_1^{-1}(t,y)$ consists of $d$ points $\big((t,y),P_i\big)$, where $P_i:=f_t(x_i)$ and $\varphi^{-1}(t,y)=\left\{x_1,\ldots,x_d\right\}\subset E_t$.

Therefore \cite[Proposition 4.2]{B1} ensures that for any $i=1,\dots,d$ and for any canonical divisor $K\in \left|\omega_{C^{(k)}}\right|$ containing $P_1,\dots,\widehat{P_i},\dots,P_d$, we have $P_i\in K$.
In addition, it follows from \cite[Lemma 2.1]{B1} that any $(g-1-k)$-plane $L\subset \mathbb{P}^{g-1}$ defines a canonical divisor $K_L$ on $C^{(k)}$, and $P\in K_L$ if and only if $\gamma(P)\in \mathbb{G}(k-1,g-1)$ is a point of the Schubert cycle $\sigma_1(L):=\left\{\left.[\Lambda]\in\mathbb{G}(k-1,g-1) \right|\Lambda\cap L\neq\emptyset\right\}$, which parameterizes $(k-1)$-planes of $ \mathbb{P}^{g-1}$ intersecting $L$.
Thus, for any  $i=1,\dots,d$ and for any $(g-1-k)$-plane $L\subset \mathbb{P}^{g-1}$ such that $(\gamma\circ f) (x_1),\dots,\widehat{(\gamma\circ f) (x_i)},\dots,(\gamma\circ f) (x_d)\in \sigma_1(L)$, we have that $(\gamma\circ f) (x_i)\in \sigma_1(L)$, i.e. the $(k-1)$-planes parameterized by $(\gamma\circ f) (x_1),\dots,(\gamma\circ f) (x_d)\in \mathbb{G}(k-1,g-1)$ are in special position with respect to $(g-1-k)$-planes of $\mathbb{P}^{g-1}$.
\end{proof}
\end{theorem}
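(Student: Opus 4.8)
The plan is to translate the statement into the language of correspondences with null trace and then invoke the Cayley--Bacharach machinery from \cite{B1}, exactly as the authors do. First I would recall from \cite[Definition 4.1, Example 4.7]{B1} that a covering family of $d$-gonal curves on a projective variety, together with diagram \eqref{diagram:covgon}, gives rise to a correspondence $\Gamma\subset (T\times\mathbb{P}^1)\times C^{(k)}$ whose first projection $\pi_1$ has degree $d$ and which has null trace, by the standard trace argument: the second projection has positive-dimensional fibers on a rational base, and the trace of a rational function along a family parameterized by a rational variety must vanish. The general fiber $\pi_1^{-1}(t,y)$ consists of the $d$ points $\big((t,y),P_i\big)$ with $P_i=f_t(x_i)$, where $\{x_1,\dots,x_d\}=\varphi^{-1}(t,y)$.

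Next I would apply \cite[Proposition 4.2]{B1}, the key implication that a correspondence with null trace forces the Cayley--Bacharach condition for the points $P_1,\dots,P_d\in C^{(k)}$ with respect to the canonical series $|\omega_{C^{(k)}}|$: any canonical divisor through all but one of the $P_i$ passes through the remaining one. This is where the hypothesis $g\geq 2$ (so that $\omega_{C^{(k)}}$ is effective and the canonical map is defined) and the fact that $C^{(k)}$ is of general type enter. The one delicate point — and I expect this to be the main obstacle to a fully rigorous write-up — is checking that for a \emph{general} point $(t,y)$ the fiber consists of exactly $d$ distinct reduced points lying in the locus where $\gamma$ is a morphism and where the General Position Theorem applies, so that the canonical divisors one considers genuinely separate the $P_i$; this requires knowing that $f$ is generically finite onto a dense subset hitting the open locus of $C^{(k)}$ on which $\gamma$ is defined, which follows from the covering property together with the irreducibility of $C^{(k)}$.

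Then I would convert the Cayley--Bacharach statement on $C^{(k)}$ into one on the Grassmannian via diagram \eqref{diagram:canonical map}. By \cite[Lemma 2.1]{B1}, every $(g-1-k)$-plane $L\subset\mathbb{P}^{g-1}$ cuts out a canonical divisor $K_L$ on $C^{(k)}$, pulled back from the Schubert hyperplane $\sigma_1(L)$ on $\mathbb{G}(k-1,g-1)$ under $\gamma$, and $P\in K_L$ if and only if $\gamma(P)\in\sigma_1(L)$, i.e. the $(k-1)$-plane spanning $\phi$ of the support of $P$ meets $L$. Hence the Cayley--Bacharach condition for $P_1,\dots,P_d$ with respect to the sublinear system spanned by the $K_L$ says precisely: for each $i$, if an $(g-1-k)$-plane $L$ meets all of the $(k-1)$-planes $(\gamma\circ f)(x_j)$ for $j\neq i$, then it meets $(\gamma\circ f)(x_i)$ too. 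By Definition \ref{def:SP} this is exactly the statement that these $d$ planes are in special position with respect to $(g-1-k)$-planes, which completes the proof. The only thing to be careful about here is that $|\omega_{C^{(k)}}|$ may be strictly larger than the image of $|\mathcal{O}_{\mathbb{G}}(1)|$ under $\gamma^*$ in general, but this causes no trouble: Cayley--Bacharach with respect to the full canonical series implies it with respect to any subseries, in particular with respect to the divisors $K_L$.
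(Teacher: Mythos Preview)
Your proposal is correct and follows essentially the same route as the paper: construct the null-trace correspondence via \cite[Definition 4.1, Example 4.7]{B1}, apply \cite[Proposition 4.2]{B1} to obtain the Cayley--Bacharach condition for $P_1,\dots,P_d$ with respect to $|\omega_{C^{(k)}}|$, and then translate this via \cite[Lemma 2.1]{B1} and the Schubert divisors $\sigma_1(L)$ into the special-position statement. Two minor remarks: your heuristic for the null trace (``positive-dimensional fibers on a rational base'') is not quite the mechanism---it is the rational $\mathbb{P}^1$-factor of $T\times\mathbb{P}^1$ that forces vanishing of holomorphic top-forms, as in \cite[Example 4.7]{B1}---and your caveat about $|\omega_{C^{(k)}}|$ possibly being strictly larger than $\gamma^*|\mathcal{O}_{\mathbb{G}}(1)|$ is unnecessary here, since diagram \eqref{diagram:canonical map} shows they coincide; but neither point affects the validity of the argument.
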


\begin{remark}\label{rem:upper bound}
We note that for any smooth curve $C$ of genus $g\geq 0$, its $k$-fold symmetric product is covered by the family of curves $\mathcal{C}\stackrel{\pi}{\longrightarrow} C^{(k-1)}$, where the fiber over $P=p_1+\dots+p_{k-1}\in C^{(k-1)}$ is the curve $C_P:=\left\{\left.p_1+\dots+p_{k-1}+q\in C^{(k)}\right|q\in C\right\}$ isomorphic to $C$.
Thus
\begin{equation}\label{eq:upper bound}
\covgon(C^{(k)})\leq \gon(C).
\end{equation} 
\end{remark}

\smallskip
Given two positive integers $r$ and $d$, let $W^r_d$ be the subvariety of $\Pic^d(C)$, which parameterizes complete linear series on $C$ having degree $d$ and dimension at least $r$ (cf. \cite[p.\,153]{ACGH}). 
Moreover, we recall that a $\mathfrak{g}^r_d$ on $C$ is a (possibly non complete) linear series of degree $d$ and dimension $r$.
Finally, before proving Theorem \ref{thm:covgon}, we point out two elementary facts involved in its proof.

\begin{remark}\label{rem:g^r_d}
Let $C\subset \mathbb{P}^{g-1}$ be the canonical model of a smooth non-hyperelliptic curve and let $D=q_1+\dots+q_s\in \Div(C)$ be a reduced divisor. 
It follows from the geometric version of the Riemann--Roch theorem (cf. \cite[p.\,12]{ACGH}) that the complete linear series $|D|$ is a $\mathfrak{g}^r_s$ with $r= s-1-\dim \Span(q_1,\dots,q_s)$.
If in addition $r\geq 1$, then 
\begin{equation}\label{eq:g^r_d}
\dim \Span(q_1,\dots,q_s)\geq\gon(C)-2.
\end{equation}
Indeed, $\dim|D-(r-1)p|\geq \dim |D|-(r-1)=1$ for any $p\in C$, and hence $\deg(D-(r-1)p)=s-r+1\geq \gon(C)$. Since $r= s-1-\dim  \Span(q_1,\dots,q_s)$, we obtain \eqref{eq:g^r_d}.
\end{remark}

\begin{lemma}\label{lem:very ample}
Let $C$ be a smooth non-hyperelliptic curve of genus $g\geq 3$ and, for some effective divisor $D\in \Div(C)$, let $|D|$ be a $\mathfrak{g}^r_d$ such that $r\geq 2$ and $\dim|D-R|= 0$ for any $R\in C^{(r)}$.
Then $|D|$ is a very ample $\mathfrak{g}^{2}_{d}$ on $C$.
\begin{proof}
We note that $d\geq 2r\geq 4$ by Clifford's theorem (see e.g. \cite[p.\,107]{ACGH}).
For any $R\in C^{(r)}$, we have that $\dim|D-R|= \dim|D|-r= 0$.
Since $\dim|D-p|\geq \dim |D|-1$ for any $p\in C$, it follows that $\dim|D-p-q|=\dim |D|-2$ for any $p,q\in C$, so that $|D|$ is very ample (cf. \cite[Proposition V.4.20]{Mir}).

By contradiction, suppose that $r\geq 3$.
For any $B\in C^{(b)}$ having $1\leq b\leq r-2$, the divisor $D-B\in \Div(C)$ is such that $\dim |D-B|=r-b\geq 2$ and $\dim |D-B-A|=0$ for any $A\in C^{(r-b)}$.
Thus we may argue as above, and we deduce that $|D-B|$ is a very ample $\mathfrak{g}^{r-b}_{d-b}$.
In particular, $C$ admits a very ample $\mathfrak{g}^2_e$ with $e:=d-r+2\geq 4$, so that $g=\frac{(e-1)(e-2)}{2}$. 
Analogously, $C$ is endowed with a very ample $\mathfrak{g}^3_{e+1}$, and Castelnuovo's bound (cf. \cite[p.\,107]{ACGH}) implies that $g\leq m(m-1)+m\varepsilon$, where $m=\frac{e-\varepsilon}{2}$ and $\varepsilon \in \{0,1\}$. 
Thus we obtain
\begin{equation}\label{eq:very ample}
\frac{(e-1)(e-2)}{2}=g\leq m(m-1+\varepsilon)= \frac{e-\varepsilon}{2}\cdot \frac{e+\varepsilon-2}{2}.
\end{equation}
Hence we get a contradiction, as inequality \eqref{eq:very ample} fails for any $\varepsilon \in \{0,1\}$ and $e\geq 4$.
\end{proof}
\end{lemma}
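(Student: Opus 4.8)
The plan is to prove the two assertions of the lemma --- very ampleness and the equality $r=2$ --- separately, after first isolating a self-contained very ampleness criterion that can be reapplied to the residual series $|D-B|$. \emph{Step 1 (a general criterion).} I would first show that if $|E|$ is a $\mathfrak{g}^{\rho}_{\delta}$ on $C$ with $\rho\geq 2$ such that $\dim|E-A|=0$ for every $A\in C^{(\rho)}$, then $|E|$ is very ample. The point is that any (possibly coinciding) pair of points $p,q\in C$ can be completed to an effective divisor $A=p+q+A'\in C^{(\rho)}$, and then $0=\dim|E-A|=\dim|E-p-q-A'|\geq\dim|E-p-q|-(\rho-2)$ forces $\dim|E-p-q|\leq\rho-2$, while the reverse inequality $\dim|E-p-q|\geq\dim|E|-2=\rho-2$ always holds. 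The same argument applied to a degree-$\rho$ divisor through $p$ gives $\dim|E-p|=\rho-1$, so $|E|$ is base-point free; hence $|E|$ separates points and tangent vectors and is very ample (cf.\ \cite[Proposition V.4.20]{Mir}). Taking $E=D$ already shows that $|D|$ is very ample.

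\emph{Step 2 (descent to residual series).} Next I would check that, for every effective divisor $B$ of degree $b$ with $0\leq b\leq r-2$, the complete series $|D-B|$ is a \emph{very ample} $\mathfrak{g}^{r-b}_{d-b}$. Indeed $\dim|D-B|\geq\dim|D|-b=r-b$, and for any $A\in C^{(r-b)}$ one has $B+A\in C^{(r)}$, so $\dim|D-B-A|=\dim|D-(B+A)|=0$ and hence $\dim|D-B|\leq\dim|D-B-A|+(r-b)=r-b$. Thus $\dim|D-B|=r-b\geq 2$, and $|D-B|$ satisfies the hypotheses of Step 1. Taking $b=r-2$ produces a very ample $\mathfrak{g}^{2}_{e}$ with $e:=d-r+2$, so that $C$ is isomorphic to a smooth plane curve of degree $e$ and $g=\binom{e-1}{2}=\tfrac{(e-1)(e-2)}{2}$; since $g\geq 3$, this already forces $e\geq 4$.

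\emph{Step 3 (ruling out $r\geq 3$ and conclusion).} Assume for contradiction that $r\geq 3$. Taking $b=r-3$ in Step 2 gives in addition a very ample $\mathfrak{g}^{3}_{e+1}$ on $C$, i.e.\ an embedding of $C$ into $\mathbb{P}^{3}$ as a nondegenerate curve of degree $e+1$. Castelnuovo's bound (see \cite[p.\,107]{ACGH}) then yields $g\leq m(m-1)+m\varepsilon$ with $m=\lfloor e/2\rfloor$ and $\varepsilon=e-2m\in\{0,1\}$, that is $g\leq\tfrac{(e-\varepsilon)(e+\varepsilon-2)}{4}$. Combining with $g=\tfrac{(e-1)(e-2)}{2}$ gives $2(e-1)(e-2)\leq(e-\varepsilon)(e+\varepsilon-2)$, which reduces to $(e-2)^{2}\leq 0$ when $\varepsilon=0$ and to $(e-1)(e-3)\leq 0$ when $\varepsilon=1$; both fail for $e\geq 4$, a contradiction. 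Hence $r=2$, and by Step 1 the series $|D|$ is a very ample $\mathfrak{g}^{2}_{d}$.

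I expect the only genuinely delicate points to be the bookkeeping in Step 2 --- pinning down $\dim|D-B|$ \emph{exactly} rather than merely bounding it --- and checking that the Castelnuovo estimate is strictly violated for all $e\geq 4$; both are elementary, so no serious obstacle is anticipated. The conceptual crux is simply recognizing that the very ampleness argument of Step 1 propagates to every residual series $|D-B|$, which is exactly what lets one extract both a plane model of degree $e$ and a space model of degree $e+1$ of $C$ from a hypothetical $\mathfrak{g}^{r}_{d}$ with $r\geq 3$.
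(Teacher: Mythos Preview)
Your proposal is correct and follows essentially the same approach as the paper's own proof: establish very ampleness from the dimension drop on subtracting two points, propagate the hypothesis to the residual series $|D-B|$ to obtain simultaneously a very ample $\mathfrak{g}^{2}_{e}$ and a very ample $\mathfrak{g}^{3}_{e+1}$, and then play the plane-curve genus formula against Castelnuovo's bound in $\mathbb{P}^{3}$. Your write-up is in fact slightly more explicit than the paper's (you isolate the very ampleness criterion as a reusable step, you pin down $\dim|D-B|$ by a two-sided inequality, and you carry out the final algebra $(e-2)^{2}\leq 0$ resp.\ $(e-1)(e-3)\leq 0$ explicitly), but these are presentational differences, not a different route.
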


\smallskip
\subsection{Proof of Theorem \ref{thm:covgon}}\label{sub:covgon}
In this subsection we prove Theorem \ref{thm:covgon}, and we briefly discuss the covering gonality of $C^{(k)}$ when $0\leq g\leq k$.

\begin{proof}[Proof of Theorem \ref{thm:covgon}]
Let $k\in \{3,4\}$ and let $C$ be a smooth curve of genus $g\geq k+1$, with $\big(g,\gon(C)\big)\neq(k+1,k)$.
In the light of \eqref{eq:upper bound}, we need to prove that $\covgon\big(C^{(k)}\big)\geq \gon(C)$.

Since $C^{(k)}$ is a variety of general type, it is not covered by rational curves. 
It follows that $\covgon\big(C^{(k)}\big)\geq 2$, and the assertion holds when $C$ is a hyperelliptic curve.

\smallskip
We assume hereafter that $C$ is non-hyperelliptic and, in order to simplify notation, we identify the curve with its canonical model $\phi(C)\subset \mathbb{P}^{g-1}$.

Aiming for a contradiction, we suppose that there exists a covering family $\mathcal{E}\stackrel{\pi}{\longrightarrow} T$ of $d$-gonal curves, with $d<\gon(C)$.  
Therefore, as $\gon(C)\leq \left\lfloor\frac{g+3}{2}\right\rfloor$ (see e.g. \cite[Theorem V.1.1]{ACGH}), we have 
\begin{equation}\label{eq:g}
d\leq \gon(C)-1\leq \left\lfloor\frac{g+1}{2}\right\rfloor \quad\text{and}\quad g\geq 2d-1.
\end{equation}
Using notation as in Definition \ref{def:covfam} and \eqref{diagram:covgon}, we consider a general point $(t,y)\in T\times \mathbb{P}^1$ and its fiber $\varphi_t^{-1}(t,y)=\left\{x_1,\ldots,x_d\right\}\subset E_t$ via the $d$-gonal map $\varphi_t\colon E_t \longrightarrow \{t\}\times\mathbb{P}^1$.  
Moreover, for any $i=1,\dots,d$, we define $P_i:=f_t(x_i)\in C^{(k)}$ and we set
\begin{equation}\label{eq:P_i}
P_1=p_1+\dots+p_{k} \quad P_2=p_{k+1}+\dots+p_{2k} \quad \dots\, \quad P_d=p_{(d-1)k+1}+\dots+p_{dk}.
\end{equation}
We point out that, since $(t,y)\in T\times \mathbb{P}^1$ is general, the points $P_i\in C^{(k)}$ are distinct and they lie outside the diagonal of $C^{(k)}$.
Moreover, for $i=1,\dots,d$, the linear space
\begin{equation}\label{eq:Lambda_i}
\Lambda_i:=\Span\left(p_{(i-1)k+1},\dots,p_{ik}\right)\subset \mathbb{P}^{g-1}
\end{equation}
spanned by $\Supp (P_i)$ has dimension $k-1$ and it is parameterized by $\gamma(P_i)\in \mathbb{G}(k-1,g-1)$.
By Theorem \ref{thm:covfamCk}, the $(k-1)$-planes $\Lambda_1,\dots,\Lambda_d$ are in special position with respect to $(g-1-k)$-planes.
In particular, the dimension of the linear span of the points $p_1,\dots,p_{dk}$ is governed by Corollary \ref{cor:dimSP}.

Let us consider the effective divisor $D=D_{(t,y)}\in\Div(C)$ given by
\begin{equation}\label{eq:divisorD}
D:=p_1+\dots +p_{dk}=\sum_{j=1}^h n_jq_j,
\end{equation}
where the points $q_j\in \{p_1,\dots,p_{dk}\}$ are assumed to be distinct and $n_j=\mult_{q_j}(D)$.
We now distinguish some cases depending on the values of the integers $n_j$.

\medskip
\underline{Case A}: suppose that $n_j=1$ for any $j=1,\dots,h$, that is the points $p_1,\dots,p_{dk}$ are all distinct.

\smallskip
\hspace{1cm}\underline{Case A.1}: assume in addition that $d=2$, i.e. the map $\varphi_t\colon E_t \longrightarrow \{t\}\times\mathbb{P}^1$ is hyperelliptic. 
Since the $(k-1)$-planes $\Lambda_1$ and $\Lambda_2$ are $\SP(g-1-k)$, they coincide, that is $\gamma(P_1)=\gamma(P_2)$ (cf. Remark \ref{rem:unionSP}).
In particular, $\dim\Span(p_1,\dots,p_{2k})=k-1$ and the geometric version of the Riemann--Roch theorem (see \cite[p.\,12]{ACGH}) yields
\begin{equation}\label{eq:GRR1}
\dim |D|=\deg D-1-\dim\Span(p_1,\dots,p_{2k})=2k-1-(k-1)=k=\frac{\deg D}{2}.
\end{equation}
Since $D\neq 0$ and $C$ is assumed to be non-hyperelliptic, Clifford's theorem (cf. \cite[p.\,107]{ACGH}) assures that $D$ is a canonical divisor of $C$, and hence $g=k+1$.
We note that for $k=3$, we obtain $g=4$ and $2=d<\gon(C)\leq 3$, but the case $\big(g,\gon(C)\big)= (4,3)$ is excluded by assumption.
Thus $k=4$, and let $\iota\colon C^{(4)}\longrightarrow C^{(4)}$ be the involution $p_1+\dots+p_4 \longmapsto K_{C^{(4)}}-(p_1+\dots+p_4)$ sending a point to the residual of the canonical system. 
We point out that the restriction of $\iota$ to the curve $f(E_t)\subset C^{(4)}$ is the automorphism induced by the hyperelliptic involution of $E_t$.
Therefore the quotient variety $C^{(4)}/\langle \iota \rangle$ is covered by a family of rational curves.
On the other hand, since $D=P_1+P_2$ is a canonical divisor on $C$, we have that $P_2=\iota(P_1)$ with $\gamma(P_1)=\gamma(P_2)$.
Hence the Gauss map factors through $C^{(4)}/\langle \iota \rangle$ and, in the light of diagram \eqref{diagram:canonical map}, also the canonical map of $C^{(k)}$ does.
Thus $C^{(4)}/\langle \iota \rangle$ is a variety of general type, because $C^{(4)}$ is.
So we get a contradiction as $C^{(4)}/\langle \iota \rangle$ can not be covered by rational curves.

\smallskip
\hspace{1cm}\underline{Case A.2}: assume that $d\geq 3$.
Consider the sequence $\Lambda_1,\dots,\Lambda_d\subset \mathbb{P}^{g-1}$ of $(k-1)$-planes satisfying $\SP(g-1-k)$, together with a partition of the set of indexes $\left\{1,\dots,d\right\}$ such that each part $\{i_1,\dots,i_t\}$ corresponds to an indecomposable sequence $\Lambda_{i_1},\dots,\Lambda_{i_t}$ of $(k-1)$-planes that satisfy $\SP(g-1-k)$, as in the assumption of Corollary \ref{cor:dimSP}.

We point out that each part consists of at least $3$ elements. 
Indeed, if we had two planes $\Lambda_{i_1}$ and $\Lambda_{i_2}$ in special position, we could argue as in Case A.1 and we would deduce that $g=k+1$.
However, this is impossible, because we know that $3\leq d\leq\gon(C)-1\leq \left\lfloor\frac{g+1}{2}\right\rfloor$, with $k\in\{3,4\}$ and $\big(k,g,\gon(C)\big)\neq (4,5,4)$ by assumption.

Let $m$ be the number of parts. 
Hence $m\leq \left\lfloor\frac{d}{3}\right\rfloor$ and Corollary \ref{cor:dimSP} implies
\begin{align*}
\dim\Span(p_1,\dots,p_{dk}) & =\dim\Span\left(\Lambda_1,\dots,\Lambda_d\right)\leq d+k-3 +(m-1)(k-2) \notag \\
 &\leq d+k-3+\left(\left\lfloor\frac{d}{3}\right\rfloor-1\right)(k-2).
\end{align*}
By the geometric version of the Riemann--Roch theorem we obtain
\begin{align}\label{eq:GRR2}
\dim |D|&=\deg D-1-\dim\Span(p_1,\dots,p_{dk})\geq dk-1-\left(d+k-3+\left(\left\lfloor\frac{d}{3}\right\rfloor-1\right)(k-2)\right) \notag\\
& = (k-1)d -(k-2)\left\lfloor\frac{d}{3}\right\rfloor>\frac{kd}{2}=\frac{\deg D}{2},
\end{align}
where the last inequality holds because $k\geq 3$.
Hence Clifford's theorem assures that $\deg D\geq 2g$, so that $\dim|D|=\deg D-g=kd-g$ by the Riemann--Roch theorem.
Combining this fact and \eqref{eq:GRR2}, we obtain $kd-g\geq (k-1)d -(k-2)\frac{d}{3}$, which gives $3g\leq d(k+1)$. 
Then we conclude by \eqref{eq:g} that $(k,g,d)=(4,5,3)$ and, in particular, $\gon(C)=4$.
However, $\big(k,g,\gon(C)\big)\neq(4,5,4)$ by assumption, so we get a contradiction.

\medskip
\underline{Case B}: suppose that the points $p_1,\dots,p_{kd}$ are not distinct, i.e. the integers $n_j$ are not all equal to $1$.
Let us denote by 
\begin{equation}\label{eq:N_alpha}
N_{\alpha}:=\left\{\left. q_j\in\Supp(D)\right|n_j=\alpha\right\} 
\end{equation}
the set of the points supporting $D$ such that $\mult_{q_j}D=\alpha$. 
Given some $\alpha\geq 1$ such that $N_{\alpha}\neq \emptyset$, we may assume $N_{\alpha}=\{q_1,\dots,q_s\}$, with $s\leq \left\lfloor\frac{\deg D}{\alpha}\right\rfloor=\left\lfloor\frac{kd}{\alpha}\right\rfloor$.

Since $T\times \mathbb{P}^1$ is irreducible, there exists a suitable open subset $U\subset T\times \mathbb{P}^1$ such that, as we vary $(t,y)\in U$, the multiplicities $n_j$ of the points supporting the divisor $D=D_{(t,y)}$ given by \eqref{eq:divisorD} do not vary, i.e. the cardinality of each $N_{\alpha}$ is constant on $U$. 
Thus we can define a rational map $\xi_{\alpha}\colon T\times \mathbb{P}^1\dashrightarrow C^{(s)}$ sending a general point $(t,y)\in U$ to the effective divisor $Q:=q_1+\dots+q_s\in C^{(s)}$ such that $N_{\alpha}=\{q_1,\dots,q_s\}$. 

For a general $t\in T$, let $\xi_{\alpha,t}\colon \{t\}\times \mathbb{P}^1\dashrightarrow C^{(s)}$ be the restriction of $\xi_\alpha$ to $\{t\}\times \mathbb{P}^1$, and let us consider the composition with the Abel--Jacobi map $u$,
\begin{equation*}
\xymatrix{\{t\}\times\bP^1  \ar@{-->}[r]^-{\xi_{\alpha,t}} & C^{(s)} \ar[r]^-{u} & J(C).\\}
\end{equation*}
As the Jacobian variety $J(C)$ does not contain rational curves, the map $u\circ \xi_{\alpha,t}$ must be constant.

\smallskip
\hspace{1cm}\underline{Case B.1}: suppose that for any $\alpha$ such that $N_{\alpha}\neq\emptyset$, the map $\xi_{\alpha,t}\colon\{t\}\times \mathbb{P}^1\dashrightarrow C^{(s)}$ is non-constant.
It follows from Abel's theorem that $|q_1+\dots+q_s|$ is a $\mathfrak{g}^r_s$  containing $\xi_{\alpha,t}\left(\{t\}\times\bP^1\right)$, so that $r\geq 1$ and $s=|N_\alpha|\geq \gon(C)$.
Hence $\alpha\leq k-1$, because otherwise we would have $s\leq \left\lfloor\frac{kd}{\alpha}\right\rfloor\leq d<\gon(C)$.

We also point out that $\alpha |N_\alpha|=\alpha s$ must be a multiple of $d$. 
To see this fact, let $i=1,\dots,d$ and consider the points $P_i=p_{(i-1)k+1}+\dots+p_{ik}$ in \eqref{eq:P_i}. 
Then the number of points $p_j\in N_\alpha\cap \Supp(P_i)$ does not depend on $i$; otherwise, by varying $(t,y)\in \{t\}\times \mathbb{P}^1$, the points $P_i$ would describe different irreducible components of $E_t$, but such a curve is irreducible. 
Hence $\alpha|N_\alpha|$---which is the number of points among $p_1,\dots,p_{dk}$ contained in $N_\alpha$---must be a multiple of $d$.

We claim that $N_1=\emptyset$ and there exists a unique $2\leq \alpha\leq k-1$ such that $N_{\alpha}\neq\emptyset$, with in particular $|N_\alpha|=\frac{kd}{\alpha}$.
Indeed, if $k=3$, we have that $3d=\deg(D)=|N_1|+2|N_2|$.  
Then there exists $j\in\{1,2\}$ such that $|N_j|\leq d$, but we observed that if $N_j\neq \emptyset$, then $|N_j|\geq \gon(C)>d$. 
By assumption of Case B, we have $N_2\neq \emptyset$, so we conclude that $N_1$ is empty and $|N_2|=\frac{3d}{2}$.
Similarly, if $k=4$ and $N_3\neq \emptyset$, then $|N_3|>d$ and $4d=\deg D=|N_1|+2|N_2|+3|N_3|$, so that $N_1=N_2=\emptyset$ and $|N_3|=\frac{4d}{3}$.
Finally, if $k=4$ and $N_3= \emptyset$,  then $N_2\neq \emptyset$ and $4d=\deg D=|N_1|+2|N_2|$. 
Since $|N_2|>d$ and $2|N_2|$ is a multiple of $d$, we conclude that $N_1=\emptyset$ and $|N_2|=2d$. 

Accordingly, we have to discuss the following three cases:
\begin{itemize}
  \item[(i)] $k=3$ and $N_2\neq\emptyset$;
  \item[(ii)] $k=4$ and $N_3\neq\emptyset$; 
  \item[(iii)] $k=4$ and $N_2\neq\emptyset$.
\end{itemize}
To this aim, we use the following fact.
\begin{claim}\label{claim:d-k}
Consider the points $P_1,\dots,P_d$ in \eqref{eq:P_i}. 
Let $A\subset \{q_1,\ldots,q_s\}$ be a set of points such that for some $j\in \{1,\dots,d\}$, we have $A\cap\Supp(P_j)=\emptyset$ and $A\cap\Supp(P_i)\neq\emptyset$ for any $i\neq j$.
Then $|A|\geq \gon(C)-k$.
\begin{proof}[Proof of Claim \ref{claim:d-k}]
Consider the $(k-1)$-planes $\Lambda_1,\dots,\Lambda_d$ defined in \eqref{eq:Lambda_i} and the linear space $\Span(A)\subset \mathbb{P}^{g-1}$, where $\dim\Span(A)\leq |A|-1$. 
Of course, if $\dim\Span(A)> g-1-k$, then $\Span(A)$ intersects the $(k-1)$-plane $\Lambda_j$.  
If instead $\dim\Span(A)\leq g-1-k$, then $\Span(A)$ meets $\Lambda_j$ because $\Lambda_1,\dots,\Lambda_d$ are $\SP(g-1-k)$ and $A\cap\Lambda_i\neq\emptyset$ for any $i=1,\dots,\widehat{j},\dots,d$.
In any case $\Span(A)\cap \Lambda_j\neq \emptyset$, so that the space $\Span(A,\Lambda_j)$ has dimension at most $|A|+k-2$ and contains (at least) $|A|+k$ distinct points of $C$ (the elements of $A$ and the $k$ points supporting $P_j$).
By the geometric version of the Riemann--Roch theorem, those points define a $\mathfrak{g}^r_{|A|+k}$ on $C$, with $r\geq 1$.
Thus $|A|+k\geq \gon(C)$.
\end{proof}
\end{claim}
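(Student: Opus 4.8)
The plan is to use the set $A$ to locate a point of $C$ on each plane $\Lambda_i$ with $i\neq j$, then invoke the special position hypothesis to force $\Span(A)$ to meet the remaining plane $\Lambda_j$ as well, and finally to read off a pencil on $C$ from the resulting deficient linear span via geometric Riemann--Roch. The delicate point will be the intersection assertion $\Span(A)\cap\Lambda_j\neq\emptyset$, because being $\SP(g-1-k)$ a priori only constrains $(g-1-k)$-planes, not the (possibly smaller) subspace $\Span(A)$.

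First I would establish that $\Span(A)\cap\Lambda_j\neq\emptyset$, where $\Lambda_1,\dots,\Lambda_d\subset\mathbb{P}^{g-1}$ are the $(k-1)$-planes of \eqref{eq:Lambda_i}. For each $i\neq j$ the hypothesis $A\cap\Supp(P_i)\neq\emptyset$, together with $\Supp(P_i)\subset\Lambda_i$, produces a point of $A$ lying on $\Lambda_i$, so $\Span(A)$ meets every $\Lambda_i$ with $i\neq j$. If $\dim\Span(A)>g-1-k$ then $\dim\Span(A)+\dim\Lambda_j\geq g-1$, and $\Span(A)$ automatically meets $\Lambda_j$ inside $\mathbb{P}^{g-1}$. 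If instead $\dim\Span(A)\leq g-1-k$, I argue by contradiction: assuming $\Span(A)\cap\Lambda_j=\emptyset$, we have $\dim\Span(A,\Lambda_j)=\dim\Span(A)+k$, and a general $(g-1-k)$-plane $L\supseteq\Span(A)$ meets $\Span(A,\Lambda_j)$ in exactly $\Span(A)$ by a dimension count, whence $L\cap\Lambda_j\subseteq\Span(A)\cap\Lambda_j=\emptyset$; but $L$ contains $\Span(A)$, hence meets every $\Lambda_i$ with $i\neq j$, so by Theorem \ref{thm:covfamCk} and the definition of $\SP(g-1-k)$ it must also meet $\Lambda_j$ --- a contradiction. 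Thus $\Span(A)\cap\Lambda_j\neq\emptyset$ in all cases.

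Finally I would conclude by geometric Riemann--Roch. Since $\Span(A)$ meets $\Lambda_j$, the span $S:=\Span(A,\Lambda_j)=\Span\big(A\cup\Supp(P_j)\big)$ has dimension at most $(|A|-1)+(k-1)=|A|+k-2$. At the same time $A\cup\Supp(P_j)$ consists of exactly $|A|+k$ distinct points of the canonical curve $C\subset\mathbb{P}^{g-1}$, because $A\cap\Supp(P_j)=\emptyset$ and the points $P_i$ avoid the diagonal of $C^{(k)}$. Hence, by the geometric version of Riemann--Roch (Remark \ref{rem:g^r_d}), the complete series $\big|\sum_{q\in A}q+P_j\big|$ is a $\mathfrak{g}^r_{|A|+k}$ with $r=|A|+k-1-\dim S\geq 1$, so $C$ admits a pencil of degree $|A|+k$ and therefore $|A|+k\geq\gon(C)$, that is $|A|\geq\gon(C)-k$. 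Everything outside the intersection step is routine dimension bookkeeping; the one step requiring care, as flagged above, is propagating the $(g-1-k)$-plane special position property down to the subspace $\Span(A)$.
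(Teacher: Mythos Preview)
Your proposal is correct and follows essentially the same route as the paper's proof: split on whether $\dim\Span(A)>g-1-k$, use the $\SP(g-1-k)$ property to force $\Span(A)\cap\Lambda_j\neq\emptyset$, and conclude via geometric Riemann--Roch. You actually give more detail than the paper at the point you flag as delicate---the paper simply asserts that $\SP(g-1-k)$ forces $\Span(A)$ to meet $\Lambda_j$, whereas you justify this by extending $\Span(A)$ to a general $(g-1-k)$-plane $L$ that still misses $\Lambda_j$.
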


\textbf{(i)} If $k=3$ and $N_2\neq\emptyset$, the divisor $D$ has the form $D=2(q_1+\dots+q_s)$, with $s=\frac{3d}{2}$.
We note that $d\neq 2$, otherwise we would have $P_1=P_2=q_1+q_2+q_3$. 
As $s$ is an integer, we may set $d=2c$ and $s=3c$, for some integer $c\geq 2$.

Suppose that $c\neq 2$. 
Given a point $x\in \{q_1,\dots,q_s\}\subset C$, it belongs to the support of two points of $C^{(3)}$, say $P_1$ and $P_2$. 
Then $\Supp(P_1)\cup\Supp(P_2)$ consists of at most 5 distinct points. 
As $s=3c\geq 9$, there exists a point $y\in \{q_1,\dots,q_s\}\smallsetminus\{x\}$ belonging to the support of two other points of $C^{(3)}$, say $P_3$ and $P_4$.
Now, we can choose $n\leq d-5$ distinct points $a_1,\dots,a_n\in \{q_1,\dots,q_s\}\smallsetminus\{x,y\}$, such that the set $A:=\left\{x,y,a_1,\dots,a_n\right\}$ contains a point in the support of $P_i$ for any $i=1,\dots,d-1$ and $A\cap\Supp(P_d)=\emptyset$ (given $x$ and $y$, it suffices to choose---at most---one point in the support of any $P_5,\ldots, P_{d-1}$, avoiding the points of $\Supp(P_d)$).
Thus the set $A$ satisfies the assumption of Claim \ref{claim:d-k}, but $|A|\leq d-3<\gon(C)-3$, a contradiction.
 
On the other hand, suppose that $c=2$. 
Since the curve $E_t$ is irreducible, the points $P_i$ in \eqref{eq:P_i} must be indistinguishable.
Then it is easy to check that, up to reordering indexes, there are only two admissible configurations:
\begin{enumerate}
  \item $P_1=q_1+q_2+q_3$, $P_2=q_1+q_4+q_5$, $P_3=q_2+q_4+q_6$, $P_4=q_3+q_5+q_6$;
  \item $P_1=q_1+q_2+q_3$, $P_2=q_1+q_2+q_4$, $P_3=q_5+q_6+q_3$, $P_4=q_5+q_6+q_4$.
\end{enumerate}
In both cases, we have that the line $\Span(q_4,q_5)$ meets the planes $\Lambda_2$, $\Lambda_3$, and $\Lambda_4$.
Since $\Lambda_1,\dots,\Lambda_4$ are $\SP(g-4)$, we deduce that $\Lambda_1\cap \Span(q_4,q_5)\neq \emptyset$, so that $\Span(\Lambda_1,q_4,q_5)= \Span(q_1,\dots,q_5)\cong \mathbb{P}^3$.
Analogously, by considering the line $\Span(q_4,q_6)$, we deduce that $\Span(q_1,\dots,q_4,q_6)\cong \mathbb{P}^3$.
Therefore, either  $\Span(q_1,\dots,q_4)\cong \mathbb{P}^2$ or $\Span(q_1,\dots,q_6)\cong \mathbb{P}^3$.
In the former case, $|q_1+\dots+q_4|$ is a $\mathfrak{g}^1_4$ on $C$, which is impossible as $\gon(C)>d=2c=4$.

In the latter case, we set $L:=q_1+\dots+q_6$, and $|L|$ turns out to be a $\mathfrak{g}^2_6$ on $C$. 
Since $\gon(C)>4$, $\dim |L-p-q|=0$ for any $p,q\in C$, so that $|L|$ is very ample.
Hence $|L|$ is the unique $\mathfrak{g}^2_6$ on $C$ (see e.g. \cite[Teorema 3.14]{Cil}).
Therefore, by retracing our construction, we have that if $P'\in C^{(3)}$ is a general point, then there exists an effective divisor $L'=q'_1+\dots+q'_s$ such that $|L'|=|L|$ and $P'\leq L'$. 
Thus we obtain a contradiction as $C^{(3)}$ is a threefold, but $\dim |L|=2$.

In conclusion, case (i) does not occur.

\smallskip
\textbf{(ii)} If $k=4$ and $N_3\neq\emptyset$, we have $D=3(q_1+\dots+q_s)$, with $s=\frac{4d}{3}$.
In particular, each $q_j$ belongs to the support of three distinct points among $P_1,\dots,P_d\in C^{(4)}$.
We argue as above, and we set $d=3c$ and $s=4c$, for some integer $c\geq 2$ (if $c$ equaled 1, then we would have $P_1=P_2=P_3=q_1+\dots+q_4$).

Suppose that $c\neq 2$. 
Given a point $x\in \{q_1,\dots,q_s\}$, it belongs to the support of three points of $C^{(4)}$, say $P_1,P_2,P_3$. 
Then $\Supp(P_1)\cup\Supp(P_2)\cup\Supp(P_3)$ consists of at most 10 distinct points. 
As $s=4c\geq 12$, there exists a point $y\in \{q_1,\dots,q_s\}\smallsetminus\{x\}$ belonging to the support of three other points of $C^{(4)}$, say $P_4,P_5,P_6$.
Hence we can choose $n\leq d-7$ distinct points $a_1,\dots,a_n\in \{q_1,\dots,q_s\}\smallsetminus\{x,y\}$, such that the set $A:=\left\{x,y,a_1,\dots,a_n\right\}$ contains a point in the support of $P_i$ for any $i=1,\dots,d-1$  and $A\cap\Supp(P_d)=\emptyset$.
Therefore $A$ satisfies the assumption of Claim \ref{claim:d-k}, with $|A|\leq d-5<\gon(C)-4$, a contradiction.

If instead $c=2$, we have $d=6$ and $s=8$. 
In this setting, we can choose two points $x,y\in \{q_1,\dots,q_8\}$ such that $x+y\leq P_i$ for only one $i\in\{1,\dots,d\}$.
To see this fact, consider the point $q_1$ and, without loss of generality, suppose that $q_1$ belongs to the support of $P_1,P_2,P_3$ and $\Supp(P_1+P_2+P_3)=\left\{q_1,\dots,q_r\right\}$ for some $5\leq r\leq 8$.
If there exists $j\in \{2,\dots,r\}$ such that $q_j$ lies on the support of only one point among $P_1,P_2,P_3$, then $q_1+q_j\leq P_i$ for only one $i\in\{1,2,3\}$, and we may set $(x,y)=(q_1,q_j)$.
If instead any $q_j$ belongs to the support of two points among $P_1,P_2,P_3$, then it is easy to check that $\Supp(P_1+P_2+P_3)=\left\{q_1,\dots,q_5\right\}$, because the divisor $P:=P_1+P_2+P_3$ has degree 12, where $\mult_{p_1}P=3$ and $\mult_{p_j}P\geq 2$ for any $j=2,\dots, r$.
Then we consider the point $q_6$ and, without loss of generality, we suppose that $q_6\in \Supp(P_4)$.
As $s=8$, $\Supp(P_4)$ contains at least an element of $\left\{q_2,\dots,q_5\right\}$, say $q_2$. 
Since $q_2$ belongs also to the support of two points among $P_1,P_2,P_3$, we conclude that $q_2+q_6\leq P_i$ only for $i=4$, and we may set $(x,y)=(q_2,q_6)$.

We point out that $A:=\{x,y\}$ intersects all the sets $\Supp(P_1),\dots,\Supp(P_d)$ but one, because $x$ belongs to the support $P_i$ and two other points of $C^{(4)}$---say $P_2$ and $P_3$---and $y\in \Supp(P_i)$ belongs to the support of two points other than $P_2$ and $P_3$.
Hence $A$ satisfies the assumption of Claim \ref{claim:d-k}, with $|A|=2= d-4<\gon(C)-4$, a contradiction.

It follows that case (ii) does not occur.

\smallskip
\textbf{(iii)} Finally, let $k=4$ and $N_2\neq\emptyset$. 
Then the divisor $D$ has the form $D=2(q_1+\dots+q_s)$, with $s=2d$. 
In particular, each $q_j$ belongs to the support of two distinct points among $P_1,\dots,P_d\in C^{(4)}$.
We note that $d\neq 2$, otherwise we would have $P_1=P_2=q_1+\dots+q_4$.

If $d\geq 8$, we argue as in cases (i) and (ii).
Namely, we consider a point $x\in \{q_1,\dots,q_s\}$ belonging to the support of two points of $C^{(4)}$, say $P_1$ and $P_2$, where $\Supp(P_1)\cup\Supp(P_2)$ consists of at most 7 elements of $\{q_1,\dots,q_s\}$. 
Moreover, as $s=2d\geq 16$, up to reorder indexes, we may consider two distinct points $y,z\in \{q_1,\dots,q_s\}\smallsetminus\{x\}$ such that $y\in \Supp(P_3)\cup\Supp(P_4)$ and $z\in \Supp(P_5)\cup\Supp(P_6)$.
Then we can choose $n\leq d-7$ distinct points $a_1,\dots,a_n\in \{q_1,\dots,q_s\}\smallsetminus\{x,y,z\}$, such that the set $A:=\left\{x,y,z,a_1,\dots,a_n\right\}$ contains a point in the support of $P_i$ for any $i=1,\dots,d-1$ and $A\cap\Supp(P_d)=\emptyset$.
Thus $A$ satisfies the assumption of Claim \ref{claim:d-k}, but $|A|\leq d-4<\gon(C)-4$, a contradiction.

If $3\leq d\leq 7$, we consider the divisor $L:=D_\text{red}=q_1+\dots+q_{2d}\in \Div(C)$.
When $d= 3$, we have that $\Span(q_1,\dots,q_{6})$ has dimension 3. 
Indeed, any point $q_j$ lies on two $3$-planes among $\Lambda_1,\Lambda_2,\Lambda_3$, and by special position property, it must belong to the third one too.
Thus $q_1,\dots,q_{6}$ span a $3$-plane and $L$ is a complete $\mathfrak{g}^2_6$.
Hence $|L-q_6|$ is a complete $\mathfrak{g}^r_5$, where $1\leq r\leq 2$ and $L-q_6=P_1+q_5$. 
We note further that $\dim W^1_5\leq 2$ and $\dim W^2_5\leq 0$ by Martens' theorem (see \cite[Theorem IV.5.1]{ACGH}).
Then we argue similarly to case (i). 
Namely, by retracing our construction, we have that if $P'\in C^{(4)}$ is a general point, then there exists and effective divisor $L'=q'_1+\dots+q'_{5}$ such that $P'\leq L'$ and $|L'|$ is a complete $\mathfrak{g}^{r}_{5}$, with $1\leq r\leq 2$.
Therefore we get a contradiction, since $\dim C^{(4)}=4$, whereas the locus of divisors $L'\in C^{(5)}$ as above has dimension $\dim|L'|\leq 3$.

As far as the remaining values of $d$ are concerned, we need the following.
\begin{claim}\label{claim:456}
If $4\leq d\leq 7$, then $\dim\Span(q_1,\dots,q_s)\leq d$.
\begin{proof}[Proof of Claim \ref{claim:456}]
It follows from \eqref{eq:g} that $g-1\geq 2d-2> 4$. 
Hence the General Position Theorem ensures that any 3-plane $\Lambda_i$ is exactly 4-secant to $C$, and meets $C$ (transversally) only at $\Supp(P_i)$. 
Therefore, without loss of generality, we may assume that $q_1\in\Lambda_1\cap \Lambda_2$ and $q_1\not\in\Lambda_{3},\dots,\Lambda_{d}$.
Let $\pi_1\colon \mathbb{P}^{g-1}\dashrightarrow \bP^{g-2}$ be the projection from $q_1$. 
By Lemma \ref{lem:sottoSP}, for $i=3,\dots,d$, the $3$-planes $\Gamma_i:=\pi_1(\Lambda_i)$ are in special position with respect to the $(g-6)$-planes. 

Suppose that the sequence $\Gamma_3,\dots,\Gamma_d$ is decomposable in the sense of Definition \ref{def:partition}.
This is possible only if $6\leq d\leq 7$, and there exists a part with $2$ elements, say $\{3,4\}$.
Hence $\Gamma_3=\Gamma_4\cong \mathbb{P}^3$, and $\overline{\pi_1^{-1}(\Gamma_3)}=\Span(q_1,\Lambda_3,\Lambda_4)\cong \mathbb{P}^4$ contains at least 6 points of $C$.
Thus $C$ admits a $\mathfrak{g}^1_6$, which is impossible as $\gon(C)>d\geq 6$.

So the sequence $\Gamma_3,\dots,\Gamma_d$ is indecomposable and Theorem \ref{thm:dimSP} ensures that $\dim\Span(\Gamma_3,\dots,\Gamma_d)\leq d-1$.
It follows that $H_1:=\Span(q_1,\Lambda_3,\dots,\Lambda_d)$ satisfies $\dim H_1\leq d$. 
If $q_j\in H_1$ for any $q_j\in \Lambda_1\cap \Lambda_2$, the claim is proved.

By contradiction, suppose that there exists $q_j\in \Lambda_1\cap \Lambda_2$ such that $q_j\not\in H_1$.
Then we can consider the projection $\pi_j\colon \mathbb{P}^{g-1}\dashrightarrow \bP^{g-2}$ and, by arguing as above, we obtain that $H_j:=\Span(q_j,\Lambda_3,\dots,\Lambda_d)$ satisfies $\dim H_j\leq d$.
Since $q_j\not\in H_1$, we deduce that $H:=\Span(\Lambda_3,\dots,\Lambda_d)=H_1\cap H_j$ has dimension $\dim H\leq d-1$. 

If $d=4$, this implies $\Lambda_3=\Lambda_4$, which is impossible because $\Lambda_3$ does not contain all the points of $\Supp(P_4)$.

If $d=5$, Lemma \ref{lem:sottosopraSP} yields that $\Lambda_3,\Lambda_4,\Lambda_5$ are $\SP(g-5)$, because $p_1\not\in H$ and $\Gamma_3,\Gamma_4,\Gamma_5$ are $\SP(g-6)$. 
Since $s=10$, there exists a point $q_j$ in the support of two points among $P_3,P_4,P_5$.
Hence $q_j$ lies on two of the spaces $\Lambda_3,\Lambda_4,\Lambda_5$, and by special position property it has to lie also on the third one, but this is impossible. 

If $d=6$, then $H$ contains at least $2(d-2)=8$ points in $\{q_1,\dots,q_{12}\}$, because for $i=3,\dots,6$, each space $\Lambda_i$ contains exactly 4 points and each point may lie only on two $3$-planes among $\Lambda_1,\dots,\Lambda_6$.
Moreover, if $H$ contained exactly 8 points, then both $\Lambda_1$ and $\Lambda_2$ should contain the remaining 4, a contradiction.
Thus $H$ contains at least 9 points of $C$, and as $\dim H\leq 5$, we have that $C$ admits a $\mathfrak{g}^r_9$ with $r\geq 3$.
Since $\gon(C)>d=6$, the $\mathfrak{g}^r_9$ satisfies the assumption of Lemma \ref{lem:very ample}.
Hence we obtain a contradiction as $r\neq 2$.

If $d=7$, we argue as in the previous case, and we note that $H$ contains at least $2(d-2)=10$ points in $\{q_1,\dots,q_{14}\}$, with $\dim H\leq 6$.
Then $C$ admits a $\mathfrak{g}^r_{10}$ with $r\geq 3$, and since $\gon(C)>d=7$, Lemma \ref{lem:very ample} leads to a contradiction.
\end{proof}
\end{claim}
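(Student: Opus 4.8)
The plan is to bound $\dim\Span(q_1,\dots,q_s)=\dim\Span(\Lambda_1,\dots,\Lambda_d)$ by combining the special position of the $3$-planes $\Lambda_i$ with the combinatorics of Case~B.1(iii)---namely $D=2(q_1+\dots+q_s)$ with $s=2d$, so that each $q_j$ lies on exactly two of the $\Lambda_i$. Corollary~\ref{cor:dimSP} by itself only gives $d+1$, so this extra structure must be used. First I would record that \eqref{eq:g} yields $g-1\geq 2d-2>4$, whence by the General Position Theorem each $\Lambda_i$ is exactly $4$-secant to $C$ and meets $C$ transversally only along $\Supp(P_i)$.

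The main tool is projection from a point of $\Lambda_1\cap\Lambda_2$. After relabelling, fix $q_1\in\Lambda_1\cap\Lambda_2$, so that $q_1\notin\Lambda_3,\dots,\Lambda_d$; for the projection $\pi_1\colon\mathbb{P}^{g-1}\dashrightarrow\mathbb{P}^{g-2}$ from $q_1$, Lemma~\ref{lem:sottoSP} makes the $3$-planes $\Gamma_i:=\pi_1(\Lambda_i)$, $i=3,\dots,d$, $\SP(g-6)$. If $\Gamma_3,\dots,\Gamma_d$ were decomposable---only possible for $d\geq 6$---some part would contain two coincident planes $\Gamma_i=\Gamma_j$, whence $\overline{\pi_1^{-1}(\Gamma_i)}=\Span(q_1,\Lambda_i,\Lambda_j)$ is a $\mathbb{P}^4$ carrying at least $6$ points of $C$, i.e.\ a $\mathfrak{g}^1_6$, contradicting $\gon(C)>d\geq 6$. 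Hence $\Gamma_3,\dots,\Gamma_d$ is indecomposable, Theorem~\ref{thm:dimSP} gives $\dim\Span(\Gamma_3,\dots,\Gamma_d)\leq(d-2)+k-3=d-1$, and therefore $H_1:=\Span(q_1,\Lambda_3,\dots,\Lambda_d)$, a cone over that span with vertex $q_1$, satisfies $\dim H_1\leq d$. Since each $q_j$ in $\Supp(P_1)\cup\Supp(P_2)$ lies either on some $\Lambda_i$ with $i\geq 3$---hence in $H_1$---or in $\Lambda_1\cap\Lambda_2$, one gets $\Span(q_1,\dots,q_s)=\Span\bigl(H_1,\,\{q_j\}\cap\Lambda_1\cap\Lambda_2\bigr)$; so if no $q_j\in\Lambda_1\cap\Lambda_2$ lies outside $H_1$, the claim follows.

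The substantive part will be to rule out a point $q_j\in(\Lambda_1\cap\Lambda_2)\smallsetminus H_1$. Projecting from such a $q_j$ exactly as above produces $H_j:=\Span(q_j,\Lambda_3,\dots,\Lambda_d)$ with $\dim H_j\leq d$, and since $q_j\notin H_1$ one obtains $H:=\Span(\Lambda_3,\dots,\Lambda_d)=H_1\cap H_j$ with $\dim H\leq d-1$. I would then reach a contradiction case by case. For $d=4$, the distinct $3$-planes $\Lambda_3\neq\Lambda_4$ (distinct by the General Position Theorem) cannot span a space of dimension $\leq 3$. For $d=5$, once it is checked that $q_1\notin H$, Lemma~\ref{lem:sottosopraSP} upgrades $\Lambda_3,\Lambda_4,\Lambda_5$ to being $\SP(g-5)$, and since $s=10$ some $q_j$ lies on two of $P_3,P_4,P_5$, hence on the third $3$-plane as well---impossible by the General Position Theorem. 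For $d=6,7$, counting the points of $C$ contained in $H$ (each $\Lambda_i$ with $i\geq 3$ contributes its $4$ points, each lying on at most two of the $\Lambda_i$, with a short extra argument when $d=6$) shows that $H$ carries a $\mathfrak{g}^r_e$ with $e=2(d-2)$---or $e=2(d-2)+1$ when $d=6$---and $r\geq 3$; since $\gon(C)>d$, this series satisfies the hypotheses of Lemma~\ref{lem:very ample}, which would force $r=2$, a contradiction. The hard part will be precisely this last step: arranging the projections so that Theorem~\ref{thm:dimSP} and Lemma~\ref{lem:sottosopraSP} apply, and carrying out the delicate point-counting in the cases $d=5,6,7$ so as to exclude every configuration still compatible with $\dim H\leq d-1$.
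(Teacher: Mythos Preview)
Your proposal is correct and follows essentially the same approach as the paper: project from a point of $\Lambda_1\cap\Lambda_2$, apply Theorem~\ref{thm:dimSP} to the (necessarily indecomposable) sequence $\Gamma_3,\dots,\Gamma_d$ to get $\dim H_1\leq d$, and in the residual case derive $\dim H\leq d-1$ and run the same case-by-case analysis for $d\in\{4,5,6,7\}$ via Lemma~\ref{lem:sottosopraSP} and Lemma~\ref{lem:very ample}. For the $d=5$ step, note that you may invoke Lemma~\ref{lem:sottosopraSP} using the point $q_j$ (for which $q_j\notin H$ is automatic, since $q_j\notin H_1\supset H$) rather than $q_1$.
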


For any $4\leq d\leq 7$, Claim \ref{claim:456} ensures that the divisor $L=q_1+\dots+q_{2d}$ defines a linear series $|L|$ of dimension $r= \deg L-1-\dim\Span(q_1,\dots,q_{2d})\geq 2d-1-d=d-1\geq 3$.
By combining \eqref{eq:g^r_d} and Claim \ref{claim:456}, we then obtain 
$$
d-1\leq\gon(C)-2\leq \dim\Span(q_1,\dots,q_{2d})\leq d.
$$
If $\dim\Span(q_1,\dots,q_{2d})=d-1$, then $|L|$ turns out to be a $\mathfrak{g}^{d}_{2d}$. 
However, this is impossible by Clifford's theorem and \eqref{eq:g}, as $d\geq 4$.
Hence it follows that $\dim\Span(q_1,\dots,q_{2d})=d$ and $|L|$ is a complete $\mathfrak{g}^{d-1}_{2d}$. 
Moreover, either $L$ is very ample, or there exist $p,q\in C$ such that $\dim|L-p-q|\geq d-2$.

Therefore, in order to conclude case (iii), we analyze separately the following three cases: (iii.a) $L$ is very ample, (iii.b) there exists $p\in C$ such that $\dim|L-p|=d-1$, (iii.c) there exist $p,q\in C$ such that $\dim|L-p|=\dim|L-p-q|=d-2$.

\smallskip
\hspace{1cm}(iii.a) If $L$ is very ample, we claim that $4\leq d\leq 5$.
If indeed $d\geq 6$, then $2d-1=m(d-2)+\varepsilon$ with $m=2$ and $\varepsilon=3$, so that Castelnuovo's bound would imply $g\leq \frac{m(m-1)}{2}(d-2)+m\varepsilon=d+4$.
Hence \eqref{eq:g} implies $2d-1\leq g\leq d+4$, which is impossible for $d\geq 6$.

When $d=5$, $|L|$ is a very ample complete $\mathfrak{g}^{4}_{10}$.
By Castelnuovo's bound we obtain $g\leq 9$. 
Moreover, as $g\geq 2d-1=9$, we deduce that $C$ maps isomorphically to an \emph{extremal} curve of degree $10$ in $\mathbb{P}^4$, i.e. to a curve whose genus attains the maximum in Castelnuovo's bound.
Thus \cite[Theorem III.2.5 and Corollary III.2.6]{ACGH} ensure that $C$ possesses a $\mathfrak{g}^{1}_{3}$, but this contradicts the assumption $\gon(C)>d=5$. 

When $d=4$, $|L|$ is a very ample complete $\mathfrak{g}^{3}_{8}$. 
Let $\phi_{|L|}\colon C\longrightarrow \mathbb{P}^3$ be the corresponding embedding, and consider the \emph{Cayley's number} of $4$-secant lines to a space curve of degree $8$ and genus $g$, defined as
$$
C(8,g,3):=\frac{g^2-21g+100}{2}.
$$
Thanks to Castelnuovo's enumerative formula counting $(2r-2)$-secant $(r-2)$-planes to irreducible curves in $\mathbb{P}^r$ (see e.g. \cite[Theorem 1.2]{ELMS}), we have that if $C(8,g,3)\neq 0$, then $\phi_{|L|}(C)$ admits at least one $4$-secant line.
As $C(8,g,3)\neq0$ for any $g\in \mathbb{N}$, there exists a line $\ell\subset \mathbb{P}^3$ intersecting $\phi_{|L|}(C)$ at $e\geq 4$ points, counted with multiplicities.
Thus the projection from $\ell$ induces a $\mathfrak{g}^{1}_{8-e}$ on $\phi_{|L|}(C)\cong C$, where $8-e\leq 4= d<\gon(C)$, a contradiction. 

\smallskip
\hspace{1cm}(iii.b) Suppose that there exists $p\in C$ such that $\dim|L-p|=d-1$.
Then $|L-p|$ is a $\mathfrak{g}^{d-1}_{2d-1}$, and $\dim|L-p-R|=0$ for any $R\in C^{(d-1)}$, because $\deg(L-p-R)=d<\gon(C)$.
Therefore, Lemma \ref{lem:very ample} ensures that $d-1=2$, which contradicts $d\geq 4$.

\smallskip
\hspace{1cm}(iii.c) Suppose that there exist $p,q\in C$ such that $\dim|L-p|=\dim|L-p-q|=d-2$.
Hence $|L-p-q|$ is a $\mathfrak{g}^{d-2}_{2d-2}$, and $\dim|L-p-q-R|=0$ for any $R\in C^{(d-2)}$, because $\deg(L-p-q-R)=d<\gon(C)$.
Thus Lemma \ref{lem:very ample} implies that $d=4$ and $|L-p-q|$ is a very ample $\mathfrak{g}^{2}_{6}$.
Therefore $g=10$ and $|L|$ is a complete $\mathfrak{g}^{3}_{8}$.

Of course, $C$ is not a plane quintic and, since $\gon(C)>d\geq 4$, $C$ is neither trigonal nor bielliptic.
Hence Mumford's theorem (see \cite[Theorem IV.5.2]{ACGH}) assures that $C$ admits only finitely many complete $\mathfrak{g}^{3}_{8}$.
So we argue similarly to case (i). 
Namely, by retracing our construction, we have that if $P'\in C^{(4)}$ is a general point, then there exists and effective divisor $L'=q'_1+\dots+q'_{8}$ such that $P'\leq L'$ and $|L'|$ is a complete $\mathfrak{g}^{3}_{8}$.
Therefore we get a contradiction, since $\dim C^{(4)}=4$, whereas the locus of divisors $L'\in C^{(8)}$ as above has dimension $\dim|L'|=3$.

\smallskip
By summing up, we proved that Case B.1 does not occur.

\smallskip
\hspace{1cm}\underline{Case B.2}: let $\alpha$ such that $N_{\alpha}=\left\{q_1,\dots,q_s\right\}\neq\emptyset$ and the map $\xi_{\alpha,t}\colon\{t\}\times \mathbb{P}^1\dashrightarrow C^{(s)}$ is constant.
In particular, $Q=q_1+\dots+q_s$ is the only point in the image of $\xi_{\alpha,t}$.
It follows that for general $y\in\mathbb{P}^1$, the points $q_1,\dots,q_s\in C$ are contained in the support of the divisor $D=D_{(t,y)}$ in \eqref{eq:divisorD}.
Since the number of points in $N_{\alpha}\cap \Supp(P_i)$ does not depend on $i=1,\dots,d$, the fiber $\varphi_t^{-1}(t,y)=\{x_1,\dots,x_d\}\subset E_t$ is such that at least one of the points $P_i=f(x_i)$ lies on the $(k-1)$-dimensional subvariety $q_1+C^{(k-1)}\subset C^{(k)}$, at least one lies on $q_2+C^{(k-1)}\subset C^{(k)}$, and so on.
Since $(t,y)$ varies on an open subset of $\{t\}\times \mathbb{P}^1$ and $E_t$ is irreducible, we deduce that $f(E_t)$ is contained in the intersection $\bigcap_{j=1}^s \left(q_j+C^{(k-1)}\right)$, where the points $q_j$ are distinct.

We point out that $s\leq k-2$. 
Indeed, if $s\geq k$, then the intersection above has dimension smaller than 1, so it can not contain the curve $f(E_t)$.
If instead $s= k-1$, then $\bigcap_{j=1}^s \left(q_j+C^{(k-1)}\right)$ is the irreducible curve $q_1+\dots+q_{k-1}+C\cong C$. 
Hence we would have $f(E_t)\cong C$, which is impossible as $\gon\left(f(E_t)\right)=\gon(E_t)=d<\gon(C)$.
Therefore $s\leq k-2$ and $f(E_t)\subset Q+C^{(k-s)}$.

Since $k\in\{3,4\}$, then $1\leq s\leq 2$, and either $Q=q_1$ or $Q=q_1+q_2$. 
In view of the constant map $\xi_{\alpha,t}\colon\{t\}\times \mathbb{P}^1\dashrightarrow C^{(s)}$ such that $(t,y)\longmapsto Q$, we may define the map $\phi\colon T\dashrightarrow C^{(s)}$, sending a general $t\in T$ to the unique point $Q\in C^{(s)}$ in the image of $\xi_{\alpha,t}$.
As $f(E_t)\subset Q+C^{(k-s)}$ and $\cE\stackrel{\pi}{\longrightarrow}T$ is a covering family, the map $\phi$ must be dominant, with general fiber $\phi^{-1}(Q)$ of dimension $\dim T-\dim C^{(s)}=k-s-1$.
Thus the pullback of $\cE\stackrel{\pi}{\longrightarrow} T$ to $\phi^{-1}(Q)\subset T$, i.e.
$$\cE\times_T \phi^{-1}(Q)\longrightarrow \phi^{-1}(Q),$$ 
is a $(k-s-1)$-dimensional family of $d$-gonal curves $E_t$, with $f(E_t)\subset Q+C^{(k-s)}$. 
Furthermore, the latter family does cover $Q+C^{({k-s})}\cong C^{(k-s)}$, otherwise the whole family $\cE\stackrel{\pi}{\longrightarrow} T$ would not cover $C^{(k)}$.
Hence $\covgon\big(C^{(k-s)}\big)\leq \gon\left(f(E_t)\right)=d<\gon(C)$.

If $(k,s)=(3,1)$ or $(k,s)=(4,2)$, then we obtain $\covgon\big(C^{(2)}\big)<\gon(C)$, which contradicts \cite[Theorem 1.6]{B1}.
In particular, this completes the proof of the Theorem \ref{thm:covgon} for $k=3$, showing that if $g\geq 4$ and $\big(g,\gon(C)\big)\neq (4,3)$, then $\covgon(C^{(3)})= \gon(C)$.

Finally, if $(k,s)=(4,1)$, then we have $\covgon(C^{(3)})<\gon(C)$, which contradicts the case $k=3$, and this concludes the proof in the remaining case $k=4$. 
\end{proof}

\begin{remark}[Low genera]\label{rem:low genus}
Let $C$ be a smooth curve of genus $0\leq g\leq k$, i.e. $g$ is outside the range covered by Theorem \ref{thm:covgon}, where $g\geq k+1$.
Under this assumption, the $k$-fold symmetric product $C^{(k)}$ is birational to $J(C)\times \mathbb{P}^{k-g}$.
In particular, 
$$
\covgon \big(C^{(k)}\big)=1\quad\text{for any }0\leq g\leq k-1.
$$

When instead $k=g$, the situation is much more subtle as $C^{(g)}$ is birational to $J(C)$, and its covering gonality is not completely understood.
Since $J(C)$ does not contain rational curves, we deduce that 
$$2\leq \covgon(J(C))=\covgon\big(C^{(g)}\big) \leq \gon(C).$$ 
In particular, we obtain that $\covgon(C^{(2)})=2$ for any curve $C$ of genus $2$.

When $C$ is a very general curve of genus $g$, it follows from \cite[Theorem 2]{Pir} that $J(C)$ does not contain hyperelliptic curves if $g=3$, and \cite[Proposition 4]{B2} extends this fact to any $g\geq 4$.
In particular, if $C$ is a very general curve of genus $g=3,4$, then $\covgon(C^{(g)})=\gon(C)$, as $\gon(C)=\left\lfloor\frac{g+3}{2}\right\rfloor=3$.
In this direction, it would be interesting to understand whether the equality $\covgon(C^{(g)})=\gon(C)$ holds when $C$ is a very general curve of genus $g\geq 5$. 

On the other hand, it is worth noticing that equality $\covgon(C^{(g)})=\gon(C)$ may fail when $C$ is a special curve.
For instance, if the curve $C$ is bielliptic and non-hyperelliptic, then $J(C)$ is covered by elliptic curves and $\covgon(C^{(g)})=2<\gon(C)$.
\end{remark}

\begin{remark}
Concerning other measures of irrationality of an irreducible projective variety $X$, one could investigate the \emph{degree of irrationality} $\irr(X)$, i.e. the least degree of a dominant rational map $X\dashrightarrow \mathbb{P}^{\dim X}$, and the \emph{connecting gonality} $\conngon(X)$, that is the least gonality of a curve $E$ passing through two general points $x,y\in X$, which satisfy $\irr(X)\geq \conngon(X)\geq \covgon(X)$.

In \cite{B1}, various results on $\irr(C^{(k)})$ are proved.
In particular, \cite[Theorem 1.3]{B1} asserts that if $C$ has very general moduli, then $\irr(C^{(2)})\geq g-1$.
Unfortunately, we can not use our techniques in order to obtain a similar result when $k\geq 3$. 
Roughly speaking, the problem is that we can no longer ensure by Corollary \ref{cor:dimSP} that the linear span $\Span(p_1,\dots,p_{dk})$ of the points supporting the divisor $D$ in \eqref{eq:divisorD} is a proper subspace of $\mathbb{P}^{g-1}$.   

On the other hand, if $C$ is a hyperelliptic curve of genus $g\geq 4$, then $\irr(C^{(2)})=\big(\gon(C)\big)^2=4$ by \cite[Theorem 1.2]{B1}.
Furthermore, in the recent paper \cite{CM}, the authors computed the degree of irrationality of the product $C_1\times C_2$ of two curves $C_i$ of genus $g_i\gg \gon(C_i)$, with $i=1,2$.
In particular, they proved that if each $C_i$ is sufficiently general among curves with gonality $\gon(C_i)$, then $\irr(C_1\times C_2)=\gon(C_1)\cdot\gon(C_2)$ (cf. \cite[Theorem B]{CM}), and the same holds when $C_1=C_2$.

In the light of these facts, it is plausible that, by imposing restrictions on the degree of complete linear series on the curve $C$, one can use the same techniques of this paper to compute the degree of irrationality of the $k$-fold symmetric product of $C$, which might agree with bounds analogous to \cite[Proposition 1.1 and Remark 6.7]{B1}.

As far as the connecting gonality of $C^{(k)}$ is concerned, it follows from the results of \cite{BP2} that if $2\leq k\leq 4$ and $C$ is a general curve of genus $g\geq k+4$, then the connecting gonality of $C^{(k)}$ is strictly bigger than the covering gonality, i.e. $\conngon(C^{(k)})> \covgon(C^{(k)})=\gon(C)$ (cf. \cite[Corollary 1.2]{BP2}). 
\end{remark}


\section*{Acknowledgements}

We would like to thank Gian Pietro Pirola for helpful discussions.



\begin{thebibliography}{99}
\bibitem{ACGH} E. Arbarello, M. Cornalba, P. A. Griffiths, J. Harris, \emph{Geometry of Algebraic Curves}, Vol. I, Grundlehren der Mathematischen
    Wissenschaften [Fundamental Principles in Mathematical Sciences] \textbf{267}, Springer-Verlag, New York, 1985.
\bibitem{B2} F. Bastianelli, Remarks on the nef cone on symmetric products of curves, \emph{Manuscripta Math.} \textbf{130} (2009), 113--120.
\bibitem{B1} F. Bastianelli, On symmetric products of curves, \emph{Trans. Amer. Math. Soc.} \textbf{364} (2012), 2493--2519.
\bibitem{BCD} F. Bastianelli, R. Cortini, P. De Poi, The gonality theorem of Noether for hypersurfaces, \emph{J. Algebraic Geom.} \textbf{23} (2014), 313--339.
\bibitem{BDELU} F. Bastianelli, P. De Poi, L. Ein, R. Lazarsfeld, B. Ullery, Measures of irrationality for hypersurfaces of large degree, \emph{Compos. Math.} \textbf{153} (2017), 2368--2393.
\bibitem{BCFS} F. Bastianelli, C. Ciliberto, F. Flamini, P. Supino, Gonality of curves on general hypersurfaces, \emph{J. Math. Pures Appl.} \textbf{125} (2019), 94--118.
\bibitem{BP2} F. Bastianelli, N. Picoco, Moving curves of least gonality on symmetric products of curves, in preparation.
\bibitem{CM} N. Chen, O. Martin, Rational maps from products of curves to surfaces with $p_g=q=0$, \emph{Math. Z.} \textbf{304} (2023), article no. 62, 14 pp.
\bibitem{CS} N. Chen, D. Stapleton, Fano hypersurfaces with arbitrarily large degrees of irrationality, \emph{Forum Math. Sigma} \textbf{8} (2020), article no. e24, 12 pp.
\bibitem{Cil} C. Ciliberto, Alcune applicazioni di un classico procedimento di Castelnuovo, in \emph{Seminari di geometria, 1982--1983 (Bologna, 1982/1983)}, Univ. Stud. Bologna, Bologna, 1984, 17--43.
\bibitem{CMNP} E. Colombo, O. Martin, J. C. Naranjo, G. P. Pirola, Degree of irrationality of a very general abelian variety, \emph{Int. Math. Res. Not. IMRN} \textbf{2022} (2022), 8295--8313.
\bibitem{DM} O. Debarre, L. Manivel, Sur la vari\'et\'e des espaces lin\'eaires contenus dans une intersection compl\`ete, \emph{Math. Ann.} \textbf{312} (1998), 549--574. 
\bibitem{ELMS} D. Eisenbud, H. Lange, G. Martens, F.-O. Schreyer, The Clifford dimension of a projective curve, \emph{Compos. Math.} \textbf{72} (1989), 173--204.
\bibitem{EGH} D. Eisenbud, M. Green, J. Harris, Cayley-Bacharach theorems and conjectures, \emph{Bull. Amer. Math. Soc. (N.S.)} \textbf{33} (1996), 295--324.
\bibitem{GK} F. Gounelas, A. Kouvidakis, Measures of irrationality of the Fano surface of a cubic threefold, \emph{Trans. Amer. Math. Soc.} \textbf{371} (2019), 711--733.
\bibitem {LM} R. Lazarsfeld, O. Martin, Measures of association between algebraic varieties, \emph{Selecta Math. (N.S.)} \textbf{29} (2023), article no. 46, 37 pp.
\bibitem {LU} J. Levinson, B. Ullery, A Cayley-Bacharach theorem and plane configurations, \emph{Proc. Amer. Math. Soc.} \textbf{150} (2022), 4603--4618.
\bibitem {LP} A. F. Lopez, G. P. Pirola, On the curves through a general point of smooth surface in $\mathbb{P}^3$, \emph{Math. Z.} \textbf{219}(1994), 93--106.
\bibitem{Mac} I. G. Macdonald, Symmetric products of an algebraic curve, \emph{Topology} \textbf{1} (1962), 319--343.
\bibitem{Mar} O. Martin, On a conjecture of Voisin on the gonality of very general abelian varieties, \emph{Adv. Math.} \textbf{369} (2020), article no. 107173, 35 pp.
\bibitem{Mir} R. Miranda, \emph{Algebraic Curves and Riemann  Surfaces}, Graduate Studies in Mathematics \textbf{5}, American Mathematical Society, Providence, RI, 1995.
\bibitem{Pic} N. Picoco, Geometry of points satisfying Cayley-Bacharach conditions and applications, \emph{J. Algebra} \textbf{631} (2023), 332--354.
\bibitem{Pir} G. P. Pirola, Curves on generic Kummer varieties, \emph{Duke Math. J.} \textbf{59} (1989), 701--708.
\bibitem{SU} D. Stapleton, B. Ullery, The degree of irrationality of hypersurfaces in various Fano varieties, \emph{Manuscripta Math.} \textbf{161} (2020), 377--408.
\bibitem{V} C. Voisin, On fibrations and measures of irrationality of hyper-K\"ahler manifolds, \emph{Rev. Un. Mat. Argentina} \textbf{64} (2022), 165--197.
\end{thebibliography}
\end{document}